\newcommand{\prs}{\langle\;,\;\rangle}
\newcommand{\too}{\longrightarrow}
\newcommand{\om}{\omega}
\newcommand{\esp}{\quad\mbox{and}\quad}
\newcommand{\G}{{\mathfrak{g}}}
\newcommand{\h}{{\mathfrak{h}}}
\newcommand{\ad}{{\mathrm{ad}}}
\newcommand{\Lu}{{\mathrm{L}}}
\newcommand{\tr}{{\mathrm{tr}}}
\newcommand{\ric}{{\mathrm{ric}}}
\newcommand{\Ri}{{\mathrm{Ric}}}
\newcommand{\Ku}{{\mathrm{K}}}
\newcommand{\B}{{\cal B}}
\newcommand{\na}{\nabla}
\newcommand{\wi}{\widetilde}
\newcommand{\al}{\alpha}
\newcommand{\be}{\beta}
\newcommand{\we}{\wedge}
\newcommand{\Ga}{\Gamma}
\newcommand{\e}{\epsilon}
\newcommand{\la}{\lambda}
\newcommand{\de}{\delta}
\font\bb=msbm10
\def\B{\hbox{\bb B}}
\def\R{\hbox{\bb R}}
\def\N{\hbox{\bb N}}
\def\C{\hbox{\bb C}}
\newtheorem{theo}{Theorem}[section]
\newtheorem{pr}{Proposition}[section]
\newtheorem{Le}{Lemma}[section]
\newtheorem{rem}{Remark}
\begin{document}

\begin{frontmatter}


 

\title{  Four-dimensional homogeneous semi-symmetric Lorentzian manifolds}

 \author[label1,label2,label3]{Abderazak Benroumane, Mohamed Boucetta, Aziz Ikemakhen}
 \address[label1]{Universit\'e Cadi-Ayyad\\ Facult\'e des sciences et techniques\\
 BP 549 Marrakech Maroc.\\e-mail: benreman@yahoo.fr
 }
 \address[label2]{Universit\'e Cadi-Ayyad\\
 Facult\'e des sciences et techniques\\
 BP 549 Marrakech Maroc\\e-mail: m.boucetta@uca.ma
 }
 \address[label3]{Universit\'e Cadi-Ayyad\\
  Facult\'e des sciences et techniques\\
  BP 549 Marrakech Maroc\\e-mail: Ikemakhen@fstg-marrakech.ma
  }



\begin{abstract}We give a complete description of semi-symmetric algebraic curvature tensors on a four-dimensional Lorentzian vector space and we use this description to determine
all four-dimensional homogeneous semi-symmetric Lorentzian manifolds.
\end{abstract}

\begin{keyword} Homogeneous Lorentzian manifolds \sep  Semi-symmetric Lorentzian manifolds \sep Petrov normal forms \sep 
\MSC 53C50 \sep \MSC 53D15 \sep \MSC 53B25


\end{keyword}

\end{frontmatter}







\section{Introduction}\label{section1}

A pseudo-Riemannian manifold $(M,g)$ is said to be semi-symmetric if its curvature tensor $\Ku$ satisfies $\Ku.\Ku=0$ which is equivalent to
\begin{equation}\label{eq1}
[\Ku(X,Y),\Ku(Z,T)]=\Ku(\Ku(X,Y)Z,T)+\Ku(Z,\Ku(X,Y)T),
\end{equation}for any vector fields $X,Y,Z,T$. Semi-symmetric pseudo-Riemannian manifolds generalize obviously locally symmetric manifolds ($\na \Ku=0$). They also generalize second-order locally symmetric manifolds ($\na^2\Ku=0$ and $\na\Ku\not=0$).  Semi-symmetric Riemannian manifolds have been first investigated by E. Cartan \cite{cartan} and the first example of a semi-symmetric not locally symmetric Riemannian manifold was given by Takagi \cite{takagi}. More recently, Szabo \cite{zabo, zabo1} gave a complete description of these manifolds. In this study, Szabo  used strong results proper to the Riemannian sitting which suggests that a similar  study of  semi-symmetric Lorentzian manifolds is far more difficult. To our knowledge, there are only few results on three dimensional locally homogeneous semi-symmetric Lorentzian manifolds \cite{calvaruso1, calvaruso2} and on second-order locally symmetric Lorentzian manifolds \cite{blanco, senovilla}. While in the Riemannian case every homogeneous semi-symmetric manifold is actually locally symmetric, in the Lorentzian case they are homogeneous semi-symmetric Lorentzian manifolds which are not locally symmetric.
In this paper, we give a complete description of semi-symmetric algebraic curvature tensors on a four-dimensional Lorentzian vector space and we use this description to determine
 all four-dimensional homogeneous semi-symmetric Lorentzian manifolds. More precisely, we will prove the following result.
 \begin{theo}\label{main} Let $M$ be a four-dimensional homogeneous semi-symmetric Lorentzian manifold. Then its Ricci operator is either diagonolizable or satisfies  $\Ri\not=0$ and $\Ri^2=0$ and one of the following situation occurs:
   \begin{enumerate}\item If $\Ri$ has a non null eigenvalue then $M$ is  locally isometric to a Lie group with a left invariant metric or $M$ is Ricci parallel and in this case one of the following situations occurs:
   \begin{enumerate} 
   \item $M$ is a space of  constant curvature,
   \item $M$ is locally isometric to a direct product of a Riemannian and a Lorentzian surfaces of  constant curvatures,
  \item $M$ is locally isometric to a direct product of $\R$ with a three dimensional Riemannian space form,
   \item $M$ is locally isometric to a direct product of $\R$ with a three dimensional Lorentzian space form.

   \end{enumerate} \item If $\Ri$ has only 0 as eigenvalues then $M$ is Ricci flat or Ricci isotropic, i.e., $\Ri\not=0$ and $\Ri^2=0$.

   \end{enumerate} Moreover, in case 1, $M$ is locally symmetric.

   \end{theo}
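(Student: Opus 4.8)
\medskip
\noindent\emph{Proof strategy.}\quad
The plan is to reduce everything to the algebraic classification of semi-symmetric curvature tensors established above, and then to organize the discussion according to the Segre (Petrov) type of the Ricci operator. Fix $p\in M$. Since $M$ is homogeneous its isometry group acts transitively, so any invariant pointwise condition that holds at $p$ holds at every point; in particular the algebraic type of $R_p$, hence of $\Ri_p$, is the same everywhere. Now $R_p$ is a semi-symmetric algebraic curvature tensor on the Lorentzian vector space $(T_pM,g_p)\cong\R^{1,3}$, so it coincides with one of the normal forms of the classification. Reading off the Ricci operator of each normal form, one sees that $\Ri$, which is $g$-self-adjoint, can only be of Segre type $\{1,1,1,1\}$ (diagonalizable over $\R$) or $\{(2,1,1)\}$ with all eigenvalues zero; the remaining admissible types for a Lorentzian self-adjoint operator---namely $\{z\bar z,1,1\}$, $\{(2,1,1)\}$ with a non-zero eigenvalue, and $\{(3,1)\}$---are incompatible with semi-symmetry. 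Since on a Lorentzian $4$-space $\Ri^2=0$ forces $\operatorname{rank}\Ri\le1$, this proves the dichotomy of the theorem: either $\Ri$ is diagonalizable, or $\Ri\ne0$ and $\Ri^2=0$. Assertion $2$ then follows at once, a diagonalizable operator whose only eigenvalue is $0$ being zero.

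It remains to treat Case $1$, i.e. $\Ri$ diagonalizable with a non-null eigenvalue. The eigenspaces of $\Ri$ are mutually $g$-orthogonal and, $\Ri$ being diagonalizable and self-adjoint, each is $g$-non-degenerate, so $T_pM$ splits $g$-orthogonally into these eigenspaces. Inspection of the associated curvature normal forms shows that this splitting is invariant under every curvature operator $\Ku(X,Y)$, i.e. the eigendistributions $D_i$ of $\Ri$ are $\Ku$-invariant. I would then combine the second Bianchi identity, the $\Ku$-invariance of the $D_i$, and homogeneity---conveniently encoded as an Ambrose--Singer homogeneous structure, so that $\na=\bar\na+S$ with $\bar\na R=\bar\na S=\bar\na T=0$---to obtain the alternative: either the homogeneous datum has trivial isotropy and $M$ is locally isometric to a Lie group with a left-invariant metric, or the $D_i$ are $\na$-parallel, hence $\na\Ri=0$. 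In the Ricci-parallel sub-case the decomposition $TM=\bigoplus D_i$ is parallel, orthogonal and non-degenerate, so the de Rham--Wu theorem for pseudo-Riemannian manifolds gives a local splitting of $M$ into factors of dimension $\le3$; each factor is homogeneous and semi-symmetric, and in dimension $2$ or $3$, with the induced curvature type, it is of constant curvature. Running through the possible orthogonal splittings $4=4$, $4=1+3$ (in the two signatures) and $4=2+2$ then yields exactly the list $(a)$--$(d)$.

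Finally, the last assertion says that every manifold occurring in Case $1$ is locally symmetric. For the Ricci-parallel sub-case this is immediate, a product of constant-curvature spaces being symmetric. For the Lie-group sub-case one returns to the finite list of semi-symmetric algebraic curvature types carrying a non-null Ricci eigenvalue, reconstructs for each the admissible left-invariant metric Lie group (the left-invariant Levi-Civita connection being fully determined by the metric Lie algebra), and verifies directly that $\na\Ku=0$---equivalently, that the Nomizu operator of the homogeneous structure annihilates $\Ku$. I expect this verification, together with establishing the group-versus-Ricci-parallel dichotomy in Case $1$, to be the main obstacle; a secondary technical point is checking non-degeneracy of the $\Ri$-eigenspaces so that the pseudo-Riemannian de Rham splitting applies cleanly, and keeping track of which algebraic normal forms actually survive the additional constraints imposed by homogeneity.
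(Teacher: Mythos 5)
Your reduction of the dichotomy (``$\Ri$ diagonalizable or $\Ri\not=0$, $\Ri^2=0$'') to the algebraic classification is essentially the paper's route (Propositions \ref{pr1} and \ref{pr2}), and your de Rham--Wu splitting for the Ricci-parallel case with two distinct eigenvalues is a reasonable substitute for the paper's citation of Theorem 7.3 of \cite{calvaruso3}. But there are two genuine gaps in Case 1. First, the alternative ``either $M$ is locally isometric to a Lie group with a left-invariant metric, or $M$ is Ricci-parallel'' is not something you can extract from the second Bianchi identity, the $\Ku$-invariance of the eigendistributions and an Ambrose--Singer structure as you sketch it: non-trivial isotropy does not force the eigendistributions of $\Ri$ to be $\na$-parallel, and trivial isotropy of one transitive presentation is not an invariant dichotomy either. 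In the paper this step is exactly the Calvaruso--Zaeim theorem (Theorem \ref{calv}, quoted from \cite{calvaruso3}), whose proof rests on the full classification of four-dimensional homogeneous Lorentzian manifolds; your proposal replaces a deep imported theorem by a hand-wave, so as written the dichotomy is unproved.

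Second, the Einstein sub-case (a single non-null eigenvalue, i.e.\ your ``$4=4$ splitting'') is not handled: Ricci-parallel then gives no splitting at all, and your claim that this case ``is of constant curvature'' is unjustified --- a priori a homogeneous Einstein semi-symmetric $4$-manifold could be irreducible without constant curvature. The paper closes this case by a genuinely different mechanism: Theorem \ref{main1} shows that a semi-symmetric Einstein curvature tensor with $\la\not=0$ has complex-diagonalizable total curvature operator with eigenvalues $0$ and $-\la$ (hence constant along $M$ by homogeneity), and then Derdzinski's rigidity theorem (Theorem \ref{derd}) yields conclusions $(a)$ or $(b)$ and local symmetry; the Petrov Ricci-flat alternative is excluded because the scalar curvature is non-null. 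Without this Petrov-type/Derdzinski input (or an equivalent argument) both the list $(a)$--$(d)$ and the final assertion that Case 1 is locally symmetric remain open in your proposal; your plan to verify $\na\Ku=0$ case by case for the Lie-group sub-case is consistent with what the paper does in Section \ref{section5}, but it is deferred rather than carried out.
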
    
   
   This result shows that if a four-dimensional homogeneous semi-symmetric Lorentzian manifold is not locally symmetric then it must be either Ricci flat or Ricci isotropic.   So
   to get a complete description of four-dimensional homogeneous semi-symmetric Lorentzian manifolds, we need to determine  four-dimensional semi-symmetric Lorentzian Lie groups and four-dimensional homogeneous semi-symmetric Ricci flat or Ricci isotropic Lorentzian manifolds. We will determine  the Lie algebras of semi-symmetric Lorentzian Lie groups up to dimension 4. We will also determine
   four-dimensional homogeneous semi-symmetric Ricci flat or  Ricci isotropic Lorentzian manifolds with non trivial isotropy by using the classification of four-dimensional pseudo-Riemannian homogeneous manifolds   given by Komrakov \cite{komrakov}. 
   Let us give briefly a description of our results and the methods we will use.

Let $(V,\prs)$ be a Lorentzian vector space and $\Ku:V\times V\too\mathrm{so}(V)$ a semi-symmetric algebraic curvature tensor, i.e., $\Ku$ satisfies the algebraic Bianchi identity and \eqref{eq1}. Let $\Ri:V\too V$ be its Ricci operator. The main result here (see Propositions \ref{pr1} and \ref{pr2} ) is that $\Ri$ has only real eigenvalues and, if $\la_1,\ldots,\la_r$ are the non null ones then $V$ splits orthogonally
\begin{equation}\label{eq2} V=V_0\oplus V_{\la_1}\oplus\ldots\oplus V_{\la_r}, \end{equation}where $V_{\la_i}=\ker(\Ri-\la_i\mathrm{Id}_V)$ and $V_0=\ker(\Ri)^2$. Moreover, $\dim V_{\la_i}\geq2$, $\Ku(V_{\la_i},V_{\la_j})=\Ku(V_0,V_{\la_i})=0$ for $i\not=j$,  $\Ku(u,v)(V_{\la_i})\subset V_{\la_i}$ and $\Ku(u,v)(V_{0})\subset V_{0}$. This reduces the study of semi-symmetric algebraic curvature tensors to the ones who are Einstein ($\Ri=\la\mathrm{Id}_V$) or the ones who are Ricci isotropic ( $\Ri\not=0$ and $(\Ri)^2=0$). The determination of Einstein or Ricci isotropic semi-symmetric algebraic curvature tensors is easy when $\dim V=2$ or 3. When $\dim V=4$ the situation is more complicated. In Theorem \ref{main1}, we give the Petrov normal forms of   Einstein semi-symmetric algebraic curvature tensors on a four-dimensional Lorentzian vector space and, in Theorem \ref{th22}, we determine the semi-symmetric isotropic Ricci ones. Einstein semi-symmetric curvature tensors on a four-dimensional vector space have Petrov type I or II depending on the vanishing or not of their scalar curvature. Thus  we get all semi-symmetric curvature tensors on  four-dimensional Lorentzian vector spaces which
 would be useful in the study of four-dimensional  semi-symmetric Lorentzian manifolds and in the proof of Theorem \ref{main}. For instance, as a consequence of Theorem \ref{main1} and based on a result in \cite{derd}, we show that   a four-dimensional Einstein  Lorentzian manifold with non null constant scalar curvature is semi-symmetric if and only if it is locally symmetric (see Theorem \ref{theo2}). The second step of our study is the determination of the Lie algebras of semi-symmetric Lorentzian Lie groups up to dimension 4.

 Now, let $G$ be a Lie group endowed with a left invariant semi-symmetric Lorentzian metric. The restriction of the curvature $\Ku$ to the Lie algebra $\G=T_eG$ of $G$ is a semi-symmetric algebraic curvature tensor and, according to \eqref{eq2}, $\G$ splits orthogonally
\[ \G=\G_0\oplus\G_{\la_1}\oplus\ldots\oplus\G_{\la_r}, \]with $\dim\G_{\la_i}\geq2$.
We show (see Proposition \ref{pr7}) that, for any $i,j\in\{1,\ldots,r\}$ and $i\not=j$,
     \begin{equation*}
             {\G_{\la_j}}.\G_{\la_i}\subset \G_{\la_i}, \;  {\G_{\la_i}}.\G_{\la_i}\subset \G_0+\G_{\la_i}, \;
             {\G_0}.\G_{\la_i}\subset \G_{\la_i}, \;
              {\G_0}.\G_0\subset \G_0, \;  {\G_{\la_i}}.\G_0\subset \G_0+\G_{\la_i},
         \end{equation*}where the dot is the Levi-Civita product given by
         \[ 2\langle u.v,w\rangle=\langle [u,v],w\rangle+\langle [w,v],u\rangle+\langle [w,u],v\rangle. \]When $\dim\G=3$ or $\dim\G=4$ then  $\G$ has one of the following types:
                \begin{enumerate}\item[]$(S30\la)$ $\dim\G=3$ and $\G=\G_0\oplus\G_\la$ with $\dim\G_0=1$, $\G_0.\G_\la\subset\G_\la$, $\G_0.\G_0\subset\G_0$ and $\la\not=0$. 
                \item[]$(S3\la)$ $\dim\G=3$ and $\G=\G_\la$ with $\la\not=0$.
                \item[]$(S30)$ $\dim\G=3$ and $\G=\G_0$.
               \item[]$(S4\mu\la)$  $\dim\G=4$ and $\G=\G_\mu\oplus\G_\la$ with $\dim\G_\mu=\dim\G_\la=2$,  $\la\not=\mu$,  $\la\not=0$, $\mu\not=0$, $\G_\mu.\G_\la\subset\G_\la$, $\G_\la.\G_\mu\subset\G_\mu$, $\G_\la.\G_\la\subset\G_\la$ and $\G_\mu.\G_\mu\subset\G_\mu$.
               \item[]$(S40^1\la)$ $\dim\G=4$ and $\G=\G_0\oplus\G_\la$ with $\dim\G_0=1$, $\G_0.\G_\la\subset\G_\la$, $\G_0.\G_0\subset\G_0$ and $\la\not=0$.
               \item[]$(S40^2\la)$  $\dim\G=4$ and $\G=\G_0\oplus\G_\la$ with $\dim\G_\la=2$, $\G_0.\G_\la\subset\G_\la$ and $\G_0.\G_0\subset\G_0$.
                \item[]$(S4\la)$ $\dim\G=4$ and $\G=\G_\la$ with $\la\not=0$.
                      \item[]$(S40)$ $\dim\G=4$ and $\G=\G_0$.
                     \end{enumerate} Moreover, for any type above we have from Sections \ref{section2} and \ref{section3}  an exact expression of the curvature. Therefore, in order to complete the study, we need to determine from the curvature the Levi-Civita product and hence the Lie algebra structure. This will be done in Section \ref{section5}. The determination of the Levi-Civita product from the curvature leads to a quadratic system of equations with many unknowns and, therefore, it is very difficult to solve. To overcome this problem, we will ovoid when it is possible a direct approach.
                     For $(S30\la)$, $(S4\mu\la)$, $(S40^1\la)$ and $(S40^2\la)$, we will manage to ovoid direct computation and we will use some tricks to show that they are  product or semi-direct product of Lie algebras. All these models are locally symmetric. The model $(S3\la)$ corresponds to the Lie algebras of three dimensional Lorentzian Lie groups with non null constant curvature which are   known mostly in the unimodular case. We give their Lie algebras in the nonunimodular case.  The model $(S30)$ when $\Ri=0$ corresponds to the Lie algebras of three dimensional flat Lorentzian Lie groups which were determined in \cite{bou1, bou}.  For $(S30)$ when $\Ri\not=0$, we will use a direct computation which will give us two classes of Lie algebras  both are not locally symmetric. The model $(S4\la)$ is the Lie algebra of an Einstein semi-symmetric Lorentzian Lie group with $\la\not=0$. According to Theorem \ref{theo2}, it is actually locally symmetric. The Lie algebras of Einstein Lorentzian Lie groups were determined by Calvaruso-Zaeim in \cite{calvaruso}, we will use the list obtained in this paper to determine all the Lie algebras of type $(S4\la)$. For $(S40)$ with $\Ri=0$, by using Calvaruso-Zaeim list we will show that the sectional curvature vanishes. When $\Ri\not=0$, the situation is  the more complicated and we will use considerations on the holonomy Lie algebra to simplify the computations. The Lie algebras obtained there are not locally symmetric not even  second-order locally symmetric. 
                        In section \ref{section6}, we give the list of four-dimensional homogeneous semi-symmetric  Ricci flat or Ricci isotropic Lorentzian manifolds by using Komrakov's classification of four-dimensional homogeneous pseudo-Riemannian manifolds \cite{komrakov}. We prove Theorem \ref{main} in the last section. Finally, we must say that this study would be impossible to achieve without the use of a computation software which permitted us to check all our computations and to save a lot of time.

\section{Semi-symmetric curvature tensors on Lorentzian vector spaces} \label{section2}

In this section, we give some general properties of semi-symmetric curvature tensors on Lorentzian vector spaces. We will show that their Ricci tensor has only real eigenvalues  and any semi-symmetric curvature tensors has an unique decomposition as a sum of semi-symmetric curvature tensors which are either Einstein or Ricci isotropic. Symmetric and skew-symmetric endomorphisms on Lorentzian vector spaces play a central role in our study, so we start by  giving some of their properties we shall need through this paper.

\subsection{Symmetric and skew-symmetric endomorphisms on Lorentzian vector spaces}
A \emph{pseudo-Euclidean  vector space } is  a real vector space  of finite dimension $n$
endowed with  a
nondegenerate inner product  of signature $(q,n-q)=(-\ldots-,+\ldots+)$.  When the
signature is $(0,n)$
(resp. $(1,n-1)$) the space is called \emph{Euclidean} (resp. \emph{Lorentzian}). We denote by $\mathrm{so}(V)$ the Lie algebra of skew-symmetric endomorphisms of $(V,\prs)$ and, for any $u,v\in V$, let $u\wedge v$ denote the skew-symmetric endomorphism of $V$ given by
\[ (u\wedge v)(z)=\langle v,z\rangle u -\langle u,z\rangle v. \]
We have, for any $A\in \mathrm{so}(V)$,
\begin{equation}\label{der}[A,u\wedge v]=(Au)\wedge v+u\wedge(Av).\end{equation}

The following well-known result will play a crucial role in our study (see \cite{oneil}).
\begin{theo}\label{reduction}
    Let  $(V,\prs)$ be a Lorentzian
    vector space of dimension $n\geq3$ and $f:V\too V$ 
    a symmetric  endomorphism.
    Then there exists a basis  $\B$ of $V$ such that the matrices of
$f$ and $\prs$ in $\B$ are given by one  of  the following  types:
  \begin{enumerate} \item type $\{\mathrm{diag}\}$:
  $$\mathrm{M}(f,\B)=\mathrm{diag}(\al_1,...,\al_n), \;\; \mathrm{M}(\prs,\B)=\mathrm{diag}(+1,...,+1,-1),$$
    \item  type $\{n-2,z\bar{z}\}$: $$\mathrm{M}(f,\B)=\mathrm{diag}(\al_1,...,\al_{n-2})\oplus \left(%
\begin{array}{cc}
  a & b \\
  -b & a \\
\end{array}%
\right),b\neq0,\;\;\mathrm{M}(\prs,\B)=\mathrm{diag}(+1,...,+1,-1),$$\item type $\{n,\al2\}$:
$$ \mathrm{M}(f,\B)=\mathrm{diag}(\al_1,...,\al_{n-2})\oplus \left(%
\begin{array}{cc}
  \al & 1 \\
  0 & \al \\
\end{array}%
\right),\;\; \mathrm{M}(\prs,\B)=I_{n-2}\oplus \left(%
\begin{array}{cc}
  0 & 1 \\
  1 & 0 \\
\end{array}%
\right),$$\item  type $\{n,\al3\}$: $$\mathrm{M}(f,\B)=\mathrm{diag}(\al_1,...,\al_{n-3})\oplus \left(%
\begin{array}{ccc}
  \al & 1 & 0 \\
  0 & \al & 1 \\ 0&0&\al\\
\end{array}%
\right),\;\; \mathrm{M}(\prs,\B)=I_{n-3}\oplus \left(%
\begin{array}{ccc}
  0 & 0&1 \\
 0& 1 & 0 \\ 1&0&0\\
\end{array}%
\right).$$
\end{enumerate} 
  \end{theo}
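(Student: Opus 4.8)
The plan is to read the four normal forms off the real Jordan decomposition of $f$, exploiting heavily that the index of $\prs$ is $1$. First I would complexify: on $V^{\C}$ with the $\C$-bilinear extension of $\prs$, the endomorphism $f$ is still symmetric, hence so is $p(f)$ for every polynomial $p$. If $E_\la=\ker(f-\la\,\mathrm{Id})^n$ is the generalized eigenspace of $\la\in\C$ and $\mu\neq\la$, then, writing $x\in E_\la$ as $x=(f-\mu\,\mathrm{Id})^n x'$ with $x'\in E_\la$ (possible since $(f-\mu\,\mathrm{Id})^n$ is invertible on $E_\la$), one gets $\langle x,y\rangle=\langle x',(f-\mu\,\mathrm{Id})^n y\rangle=0$ for $y\in E_\mu$; so $E_\la\perp E_\mu$, and as $V^{\C}=\bigoplus_\la E_\la$ is a direct sum on which $\prs$ is nondegenerate, $\prs$ is nondegenerate on each $E_\la$. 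Grouping each real eigenvalue with itself and each non-real one with its conjugate, this descends to an orthogonal, $f$-invariant real decomposition $V=U_{\la_1}\oplus\cdots\oplus U_{\la_s}\oplus W_1\oplus\cdots\oplus W_t$, where each $U_{\la_i}$ (real eigenvalue $\la_i$, possibly with nilpotent part) and each $W_j$ (conjugate pair $a_j\pm ib_j$, $b_j\neq0$) is nondegenerate. Since $\prs$ has index $1$, exactly one of these summands is Lorentzian and the others are positive definite; on a positive-definite $U_{\la_i}$ a symmetric endomorphism is orthogonally diagonalizable with real eigenvalues, so its nilpotent part vanishes, while a summand of type $W_j$, if present, must be the Lorentzian one (a symmetric endomorphism of a Euclidean space has only real eigenvalues). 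Hence all but one summand contribute the diagonal entries $\mathrm{diag}(\al_i)$, and everything is concentrated on the unique Lorentzian summand.

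Suppose first that this Lorentzian summand is some $W=W_j$, so $f$ has a non-real eigenvalue $z=a+ib$. Over $\C$, $W^{\C}=E_z\oplus E_{\bar z}$ with $E_{\bar z}=\overline{E_z}$, the two summands orthogonal and each nondegenerate; writing the real points of $W^{\C}$ as $x+\bar x$ ($x\in E_z$) one computes $\langle x+\bar x,y+\bar y\rangle=2\,\mathrm{Re}\langle x,y\rangle$, so the form on $W$ is twice the real part of the nondegenerate symmetric $\C$-bilinear form carried by $E_z$. In a $\C$-basis of $E_z$ in which that form is the identity, the real form reads, on the diagonal, $2\sum_i\big((\mathrm{Re}\,x_i)^2-(\mathrm{Im}\,x_i)^2\big)$, of signature $(m,m)$ with $m=\dim_{\C}E_z$. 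As $W$ has index $1$, $m=1$: a single conjugate pair with a one-dimensional eigenspace. Then $W$ is a Lorentzian plane, $W^\perp$ is Euclidean with $f|_{W^\perp}$ diagonalizable, and since $f|_W$ has no real eigenvector, in an orthonormal basis $e_{n-1},e_n$ of $W$ with $\langle e_{n-1},e_{n-1}\rangle=1=-\langle e_n,e_n\rangle$ it is represented by $\left(\begin{smallmatrix}a&b\\-b&a\end{smallmatrix}\right)$. This is type $\{n-2,z\bar z\}$.

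It remains to handle the case where $f$ has only real eigenvalues, so on the unique Lorentzian summand $U$ we have $f=\la\,\mathrm{Id}+N$ with $N$ self-adjoint nilpotent. The key remark is that for a Jordan chain $v,Nv,\dots,N^{k-1}v$ of $N$ (so $N^{k-1}v\neq0$, $N^kv=0$), self-adjointness yields $\langle N^iv,N^jv\rangle=\langle v,N^{i+j}v\rangle$, which vanishes once $i+j\geq k$. Hence, for $k\geq2$, the bottom $N^{k-1}v$ is isotropic and, lying in $\mathrm{Im}\,N=(\ker N)^\perp$, orthogonal to $\ker N$; so two Jordan chains of length $\geq2$ would give linearly independent bottoms spanning a $2$-dimensional totally isotropic subspace, impossible in Lorentzian signature. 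Thus at most one Jordan block of $N$ has size $>1$. For that block, if $c:=\langle v,N^{k-1}v\rangle$ were $0$ then $N^{k-1}v$ would be orthogonal to the chain's span and to $\ker N$, hence to all of $U=\mathrm{span}(v,\dots,N^{k-1}v)+\ker N$ (the only chain vector lying in $\ker N$ being $N^{k-1}v$), forcing $N^{k-1}v=0$: contradiction. So $c\neq0$, the span $S$ of the chain is nondegenerate, and its Gram matrix, namely the Hankel matrix with $(i,j)$-entry $\langle v,N^{i+j}v\rangle$ that vanishes for $i+j\geq k$ and equals $c$ for $i+j=k-1$, has index $\lfloor k/2\rfloor$ if $c>0$ and $\lceil k/2\rceil$ if $c<0$. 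As the index of $S$ is at most that of $U$, namely $1$, this forces $k\leq3$, and $c>0$ when $k=3$; then $S^\perp$ is Euclidean with $N|_{S^\perp}=0$. Normalizing the chain basis — replacing $v$ by $v+sNv$ (and, for $k=3$, $+\,tN^2v$) to kill $\langle v,v\rangle$ (and $\langle v,Nv\rangle$), then rescaling the whole chain — brings $\prs$ to $\left(\begin{smallmatrix}0&1\\1&0\end{smallmatrix}\right)$ or $\left(\begin{smallmatrix}0&0&1\\0&1&0\\1&0&0\end{smallmatrix}\right)$ and $f$ to the corresponding Jordan block, i.e. to types $\{n,\al2\}$ and $\{n,\al3\}$; with $N=0$, which gives $\{\mathrm{diag}\}$, this exhausts the list.

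The step I expect to be the main obstacle is precisely this last one: turning the index-one hypothesis into the combinatorial restrictions on the Jordan structure — $N^3=0$, at most one nontrivial nilpotent block, and, in the complex case, a single one-dimensional conjugate pair. The remark that the bottom of a Jordan chain of length $\geq2$ is isotropic and orthogonal to $\ker N$, together with the signature count for the anti-triangular Hankel matrices, is what makes these restrictions fall out; once they are in hand, exhibiting the explicit adapted basis in each of the four cases is a routine computation, which is why I only sketch it.
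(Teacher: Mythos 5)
The paper never proves this statement --- it is quoted as ``well-known'' with a pointer to O'Neill --- so there is no in-paper argument to compare yours with; I can only assess your proof on its own terms. Its architecture is the standard and correct one: orthogonality and nondegeneracy of generalized eigenspaces after complexification, the index-one hypothesis forcing all but one summand to be Euclidean, the signature count on the anti-triangular Hankel Gram matrix of a Jordan chain to bound the block size by $3$ (and to force $c=\langle v,N^{2}v\rangle>0$ when $k=3$), and the totally isotropic span of chain bottoms to exclude two nontrivial blocks. All of these steps are sound.

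There is, however, one genuine gap, and it sits exactly in the step you dismiss as routine. For $k=2$ the invariant $c=\langle v,Nv\rangle$ is \emph{not} forced to be positive (the Gram matrix $\bigl(\begin{smallmatrix}0&c\\ c&0\end{smallmatrix}\bigr)$ has index $1$ for either sign of $c$), and its sign cannot be changed: any other generator of the length-$2$ chain has the form $b_2=sv+u$ with $u\in\ker N$, and since $Nv\in\mathrm{Im}\,N=(\ker N)^{\perp}$ one gets $\langle Nb_2,b_2\rangle=s^{2}c$. So ``rescaling the whole chain'' produces $\langle b_1,b_2\rangle=s^{2}c$, which is never $+1$ when $c<0$, and the reduction to the displayed form of type $\{n,\al2\}$ fails. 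A concrete instance: on $\R^{3}$ with Gram matrix $\mathrm{diag}(1)\oplus\bigl(\begin{smallmatrix}0&-1\\-1&0\end{smallmatrix}\bigr)$ (which is Lorentzian) take $f$ with $fw_2=w_1$ and $fe_1=fw_1=0$; this $f$ is self-adjoint, has a single $2\times2$ Jordan block, and fits none of the four listed types as written. The conclusion one actually reaches is the classical one in which type $\{n,\al2\}$ carries a sign $\e=\pm1$ (either in the off-diagonal entry of the Jordan block or in the metric block $\bigl(\begin{smallmatrix}0&\e\\ \e&0\end{smallmatrix}\bigr)$). This is arguably an imprecision in the statement as transcribed by the paper rather than in your strategy --- and it is harmless for the way the theorem is used in Proposition~\ref{pr1}, where only $\al=0$ is extracted and the argument is insensitive to that sign --- but your write-up asserts that the normalization lands on the stated matrices, and for $k=2$ with $c<0$ it does not.
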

  
Representations of solvable Lie algebras in pseudo-Euclidean vector spaces will appear naturally in our study. There are   some of their properties which will be useful later.
 
Let $\G$ be a real solvable Lie algebra, 
  $(V,\prs)$  a pseudo-Euclidean vector space and   $\rho:\G\too\mathrm{so}(V)$  a representation of $\G$. For any $\la\in\G^*$, put $V_\la=\{x\in V,\;\rho(u)x=\la(u)x\;\mbox{for all}\;u\in\G \}.$ The representation $\rho$  is called \emph{indecomposable} if $V$ does not contain any nondegenerate invariant vector subspace. The following result is  well-known.  
\begin{pr}\label{co1}Let $\rho:\G\too\mathrm{so}(V)$ be  a  representation  on an Euclidean vector space. Then $V$ splits orthogonally
$$V=\bigoplus_{i=1}^qE_i\oplus V_0,$$where, for $i=1,\ldots,q$, $E_i$ is an invariant indecomposable 2-dimensional
vector space and there exists $\la_i\in\G^*\setminus\{0\}$ and $(e_i,f_i)$ and orthonormal basis of $E_i$ such that, for any $u\in\G$ the restriction of $\rho(u)$ to $E_i$ is $\la(u)e_i\wedge f_i$. 
In particular, a solvable subalgebra of $\mathrm{so}(V)$ must be abelian.

\end{pr}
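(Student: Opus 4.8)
The plan is to exploit the \emph{Euclidean} hypothesis: in $\mathrm{so}(V)$ with $\prs$ positive definite, every operator is skew-symmetric for a positive-definite form, hence semisimple over $\C$ with purely imaginary eigenvalues. First I would prove the ``in particular'' assertion, namely that $\rho(\G)$ (indeed any solvable subalgebra of $\mathrm{so}(V)$) is abelian. The image $\rho(\G)$ is solvable, so after extending scalars to $\C$, Lie's theorem puts all operators of $\rho(\G)$ in simultaneous upper-triangular form on $V\otimes\C$; hence every element of $[\rho(\G),\rho(\G)]$ is strictly upper triangular, thus nilpotent, while being an element of $\mathrm{so}(V)$ it is also semisimple — so it vanishes, and $\rho(\G)$ is abelian. (One could instead invoke that a connected solvable subgroup of the compact group $\mathrm{O}(V)$ has abelian closure, a torus.)

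Next, writing $\mathfrak{a}=\rho(\G)$, I would split off the common fixed space $V_0=\bigcap_{u\in\G}\ker\rho(u)$: it is $\mathfrak{a}$-invariant, and because $\prs$ is definite its orthogonal complement $W$ is a nondegenerate $\mathfrak{a}$-invariant subspace on which $\mathfrak{a}$ acts with trivial common kernel. Everything then reduces to decomposing $W$.

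On $W$, the commuting family of semisimple operators $\{\rho(u)|_W\}$ admits a common eigenspace (weight) decomposition $W\otimes\C=\bigoplus_\mu W_\mu$, with each weight $\mu$ purely imaginary valued and complex conjugation acting by $\overline{W_\mu}=W_{\bar\mu}$; the weight $0$ cannot occur, since $W_0$ is defined over $\R$ and its real points lie in the trivial common kernel. For each conjugate pair $\{\mu,\bar\mu\}$ I would pick a Hermitian-orthonormal basis $z_1,\dots,z_k$ of $W_\mu$; then $E_j:=(\C z_j\oplus\C\bar z_j)\cap W$ are mutually orthogonal, $\rho$-invariant, $2$-dimensional real subspaces with $W=\bigoplus_j E_j$. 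Writing $z_j=\tfrac1{\sqrt2}(e_j-if_j)$ for a real orthonormal basis $(e_j,f_j)$ of $E_j$, and $\mu\circ\rho=i\la_j$ with $\la_j\in\G^*\setminus\{0\}$, the eigen-relation $\rho(u)z_j=\mu(u)z_j$ unwinds to $\rho(u)|_{E_j}=\la_j(u)\,e_j\wedge f_j$ (possibly after swapping $\mu\leftrightarrow\bar\mu$ to fix the sign convention in $e\wedge f$). Indecomposability of each $E_j$ is automatic: a proper invariant subspace would be a line, which a nonzero skew-symmetric operator on a Euclidean plane cannot possess.

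I expect no serious obstacle. The genuine input is the first step, where the Euclidean (compactness) hypothesis yields abelianness; the rest is the standard normal form for a commuting family of skew-symmetric operators, and the only point demanding care is the bookkeeping with the real structure in the last step — checking that the complex weight decomposition descends to honest pairwise-orthogonal real $2$-planes, and that the action on each is governed by a single linear functional $\la_j$.
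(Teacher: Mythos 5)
Your argument is correct. Note that the paper offers no proof of this proposition at all—it is quoted as a well-known fact—so there is nothing to compare against; what you give is the standard complete argument: solvability plus Lie's theorem over $\C$ makes every commutator a nilpotent element of $\mathrm{so}(V)$, hence zero since skew-symmetric operators for a definite form are semisimple, and then the simultaneous (purely imaginary) eigenspace decomposition of the commuting family descends, via conjugation-stable pairs $\C z_j\oplus\C\bar z_j$, to pairwise orthogonal invariant real $2$-planes with the stated action. The only loose end, the sign in $\la_j(u)\,e_j\wedge f_j$, is exactly the one you flag and is fixed by swapping $e_j$ and $f_j$ (or replacing $\la_j$ by $-\la_j$), so the proof is complete as written.
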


 The proof of the following proposition has been given in \cite{bou}.
  \begin{pr}\label{pr4} Let $\G$ be a solvable Lie algebra and
   $\rho:\G\too \mathrm{so}(V)$  an indecomposable Lorentzian representation. Then one of the following cases occurs:
  \begin{enumerate}\item $\dim V=1$ and $V=V_0$.
  \item $\dim V=2$,  there exists $\la\in\G^*\setminus\{0\}$ and   an orthonormal basis $(g,h)$ of $V$ such that  $\langle g,g\rangle=-1$
  and, for any $u\in\G$, $\rho(u)=\la(u)g\wedge h$.
  \item $\dim V\geq3$, there exists $\la\in\G^*$ such that  $V_\la$ is a totally isotropic one dimensional vector space. Moreover, for any $\mu\not=\la$, $V_\mu=\left\{0\right\}$.
  
  \end{enumerate}
  
  \end{pr}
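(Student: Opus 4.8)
The plan is to reduce the problem to the classification of indecomposable solvable representations on a Euclidean space (Proposition \ref{co1}) by analyzing the restriction of $\rho$ to a maximal negative-definite or null subspace. First I would fix an orthonormal basis $(e_1,\ldots,e_{n-1},e_n)$ of $V$ with $\langle e_n,e_n\rangle=-1$, and consider the timelike line $\R e_n$. If $\dim V=1$ then $\mathrm{so}(V)=\{0\}$ and $V=V_0$, which is case 1; so assume $\dim V\geq 2$. Since $\rho$ is indecomposable, no nondegenerate $\rho$-invariant subspace is proper, and in particular $V$ has no nondegenerate invariant subspace of codimension $\geq1$; the strategy is to locate the (necessarily degenerate) invariant subspaces forced by the algebraic structure of $\mathrm{so}(V)$ for Lorentzian $V$.

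The key structural input is that any skew-symmetric endomorphism $A$ of a Lorentzian space either is semisimple with a timelike or spacelike invariant 2-plane (the "elliptic/hyperbolic" case) or is nilpotent-like with a single invariant null direction; more precisely, $\rho(\G)$ being solvable, Lie's theorem (after complexification) gives a common eigenvector, hence a common invariant flag. I would run this as follows. Complexify and apply Lie's theorem: there is a common eigenvector $v\in V^{\C}$ with $\rho(u)v=\la(u)v$ for a linear form $\la:\G\to\C$. Because each $\rho(u)$ is skew-symmetric, $\langle v,v\rangle$ and the eigenvalue structure are constrained: one shows the real part of $\la$ must vanish or $v$ is null. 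If $\dim V=2$, a direct check of $\mathrm{so}(V)$ — which is one-dimensional, spanned by $g\wedge h$ for a null-or-orthonormal basis — yields case 2, since $\rho(u)=\la(u)g\wedge h$ with $\langle g,g\rangle=-1$ and $\la\neq0$ because otherwise $\rho$ is trivial and $V=V_0$ decomposes. For $\dim V\geq3$, the common eigenvector from Lie's theorem gives an invariant isotropic line $\R x$: it must be isotropic, for if $\langle x,x\rangle\neq0$ then $\R x$ is nondegenerate and invariant, contradicting indecomposability. Set $\la$ the corresponding eigenform, so $x\in V_\la$. Then $x^{\perp}$ is an invariant subspace containing $x$, and on the Euclidean-type quotient $x^{\perp}/\R x$ the induced representation $\bar\rho$ is a solvable representation on a Euclidean space, hence by Proposition \ref{co1} and the remark there $\bar\rho(\G)$ is abelian and splits into invariant indecomposable 2-planes plus a trivial part. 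I would then show $\dim V_\la=1$: if there were a second independent eigenvector $x'\in V_\la$, one analyzes $\mathrm{span}(x,x')$ and its orthogonal, using indecomposability to force a contradiction unless the extra vector can be absorbed into a nondegenerate invariant complement. Finally, for $\mu\neq\la$, any $y\in V_\mu$ satisfies $\rho(u)y=\mu(u)y$; pairing $\langle \rho(u)x,y\rangle=-\langle x,\rho(u)y\rangle$ gives $(\la(u)+\mu(u))\langle x,y\rangle=0$, and a similar computation with two such eigenvectors forces $V_\mu=\{0\}$ when $\mu\neq\la$, by exploiting that $x$ is the unique isotropic invariant direction and that a nonzero $V_\mu$ would produce a nondegenerate invariant plane $\mathrm{span}(x,y)$ (since $\langle x,y\rangle$ cannot vanish for both together with all the orthogonality relations) contradicting indecomposability.

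The main obstacle I anticipate is the passage from "there is an invariant isotropic line" to the sharp conclusions $\dim V_\la=1$ and $V_\mu=\{0\}$ for $\mu\neq\la$: one has to carefully rule out larger eigenspaces and to control how the Euclidean piece $x^{\perp}/\R x$ interacts with lifts back to $x^{\perp}$, making sure no nondegenerate invariant complement is hiding there. The device that makes this tractable is that $\rho(\G)$ restricted to the spacelike part is abelian by Proposition \ref{co1}, so the whole image $\rho(\G)$ sits inside a very small solvable subalgebra of $\mathrm{so}(V)$ — essentially the stabilizer of a null line, whose structure (a Heisenberg-type nilpotent part plus a one-dimensional torus) can be written down explicitly — and indecomposability then pins down exactly which subalgebras and which $\la$ occur. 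I would finish by checking that in case 3 the form $\la$ is genuinely allowed to be nonzero or zero (both occur), and that $V_\la$ being totally isotropic and one-dimensional is consistent, completing the trichotomy.
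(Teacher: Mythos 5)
The paper does not actually prove Proposition \ref{pr4} --- it is quoted from \cite{bou} --- so your outline can only be measured against what a complete argument requires. Your cases $\dim V=1,2$ are fine, and your overall strategy for $\dim V\geq3$ (Lie's theorem on $V\otimes\C$, indecomposability, and the signature constraints) is the natural one; but the two steps that carry all the content are not actually established. First, Lie's theorem only gives a \emph{complex} common eigenvector $v$ with weight $\la:\G\too\C$, and your passage to ``an invariant isotropic line $\R x$'' is a jump when $\la$ is not real-valued; the constraint you do record ($\mathrm{Re}\,\la=0$ or $v$ null) is not the relevant one for this step. What is needed is: if $\la$ is non-real, then $W=\mathrm{span}_{\R}(\mathrm{Re}\,v,\mathrm{Im}\,v)$ is a two-dimensional invariant subspace; $W$ nondegenerate contradicts indecomposability (it is proper since $\dim V\geq3$), while $W$ degenerate has an invariant null radical, so $\rho(u)|_W$ preserves a flag with one-dimensional quotients and has real eigenvalues, contradicting that these eigenvalues are $\la(u),\overline{\la(u)}$. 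Hence the weight is real, a real common eigenvector exists, and the line it spans is isotropic by indecomposability.

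Second, the assertions $\dim V_\la=1$ and $V_\mu=\{0\}$ for $\mu\neq\la$ are only gestured at (``one analyzes $\mathrm{span}(x,x')$ \dots unless the extra vector can be absorbed'', ``$\langle x,y\rangle$ cannot vanish''), and the parenthetical is wrong as stated: $\langle x,y\rangle=0$ is a case to be refuted, not excluded a priori. The missing lemma that settles both claims is that \emph{every} weight space is totally isotropic: for $\mu\neq0$, $x,y\in V_\mu$ give $\mu(u)\langle x,y\rangle=\langle\rho(u)x,y\rangle=-\langle x,\rho(u)y\rangle=-\mu(u)\langle x,y\rangle$, so $\langle x,y\rangle=0$; for $\mu=0$, a non-isotropic invariant vector would span a proper nondegenerate invariant line, and a subspace all of whose vectors are isotropic is totally isotropic. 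Since a Lorentzian space contains no totally isotropic subspace of dimension $\geq2$, $\dim V_\mu\leq1$ for every $\mu$. Then, for nonzero $x\in V_\la$, $y\in V_\mu$ with $\mu\neq\la$, the identity $(\la(u)+\mu(u))\langle x,y\rangle=0$ leaves two cases: $\langle x,y\rangle=0$, so $\mathrm{span}(x,y)$ is a two-dimensional totally isotropic subspace (impossible); or $\mu=-\la\neq0$ and $\langle x,y\rangle\neq0$, so $\mathrm{span}(x,y)$ is a proper nondegenerate invariant plane, contradicting indecomposability since $\dim V\geq3$. With these two repairs your outline becomes a complete proof; the excursion through the Euclidean quotient $x^{\perp}/\R x$ and Proposition \ref{co1} is not needed for this statement.
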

  
  As a consequence of this proposition, we will prove the following result which will be used in the proof of Theorem \ref{main1}.
  
\begin{pr}\label{pr5} Let $V$ be a four-dimensional Lorentzian vector space and $\G$ a  solvable subalgebra of $\mathrm{so}(V)$ of dimension $\geq2$. Then  we have two cases:
  \begin{enumerate}
  
            \item There exists an orthonormal basis $(e,f,g,h)$ with $\langle h,h\rangle=-1$ such that $\G=\mathrm{span}\{f\wedge g+f\wedge h,g\wedge h\}$.
            
            \item There exists an orthonormal basis $(e,f,g,h)$ with $\langle h,h\rangle=-1$ such that $\G\subset\mathrm{span}\{e\wedge f,e\wedge g+e\wedge h,f\wedge g+f\wedge h,g\wedge h\}$.

                            \end{enumerate}

  \end{pr}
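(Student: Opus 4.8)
The plan is to show that $\G$ always stabilizes a totally isotropic line of $V$ --- which, after a standard choice of orthonormal basis, is exactly case 2 --- and then to obtain the sharper form of case 1 in the one situation where it is forced, namely when $\G$ is two-dimensional and non-abelian.

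First I would regard the inclusion $\G\hookrightarrow\mathrm{so}(V)$ as a faithful representation and decompose $V$ orthogonally into $\G$-invariant, nondegenerate, indecomposable subspaces; since $V$ is Lorentzian, exactly one of these summands, say $W$, is Lorentzian and the others are positive definite. I would then go through the possibilities for $\dim W$. If $W=V$, then by case (3) of Proposition \ref{pr4} (applicable since $\dim V=4\geq3$) there is a $\G$-invariant totally isotropic line. If $\dim W=3$, then $W$ must itself be $\G$-indecomposable: were it to split, $V$ would decompose into pieces on which $\G$ acts either trivially (the one-dimensional ones) or through a single positive definite plane, where by Proposition \ref{co1} the image is at most one-dimensional --- forcing $\dim\G\leq1$, a contradiction; hence case (3) of Proposition \ref{pr4} applied to $\rho|_W$ yields a $\G$-invariant totally isotropic line inside $W$. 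If $\dim W=1$, then $\G$ acts trivially on $W$ and, by Proposition \ref{co1}, on the positive definite $3$-space $W^\perp$ with image of dimension $\leq1$, again contradicting $\dim\G\geq2$. Finally, if $\dim W=2$, case (2) of Proposition \ref{pr4} gives an orthonormal basis $(g,h)$ of $W$ with $\langle h,h\rangle=-1$ and $\rho(u)=\mu(u)\,g\wedge h$, and Proposition \ref{co1} gives an orthonormal basis $(e,f)$ of $W^\perp$ with $\rho(u)=\nu(u)\,e\wedge f$; thus $\G\subset\mathrm{span}\{e\wedge f,g\wedge h\}$, so $\G=\mathrm{span}\{e\wedge f,g\wedge h\}$ since $\dim\G\geq2$, and this algebra stabilizes $\R(g+h)$. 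In all cases $\G$ stabilizes a totally isotropic line $D$.

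Next I would convert $D$ into the required basis. Choosing $0\neq n\in D$ and a null vector $\bar n$ with $\langle n,\bar n\rangle=2$, setting $P=(\R n\oplus\R\bar n)^\perp$ (positive definite of dimension $2$), picking an orthonormal basis $(e,f)$ of $P$, and putting $g=\tfrac{1}{2}(n+\bar n)$ and $h=\tfrac{1}{2}(n-\bar n)$, I obtain an orthonormal basis $(e,f,g,h)$ with $\langle h,h\rangle=-1$ and $g+h=n$, so $D=\R(g+h)$. A short computation using $(u\wedge v)(z)=\langle v,z\rangle u-\langle u,z\rangle v$ shows that the stabilizer in $\mathrm{so}(V)$ of the line $\R(g+h)$ is exactly $\mathrm{span}\{e\wedge f,\;e\wedge g+e\wedge h,\;f\wedge g+f\wedge h,\;g\wedge h\}$; since $\G$ is contained in it, case 2 holds. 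To reach case 1 when $\G$ is two-dimensional and non-abelian, I would write this parabolic as $\mathfrak p=\mathfrak a\oplus\mathfrak m$ with $\mathfrak a=\mathrm{span}\{e\wedge f,g\wedge h\}$ abelian and $\mathfrak m=\mathrm{span}\{e\wedge(g+h),f\wedge(g+h)\}$ the abelian nilradical; from \eqref{der} one gets $\mathrm{ad}(g\wedge h)=-\mathrm{Id}$ on $\mathfrak m$, while $\mathrm{ad}(e\wedge f)$ acts on $\mathfrak m$ as a rotation. A two-dimensional non-abelian subalgebra of $\mathfrak p$ meets $\mathfrak m$ in a line $\R v$ and contains some $X=\alpha\,e\wedge f+\beta\,g\wedge h+w$, $w\in\mathfrak m$, with $[X,v]=cv$ and $c\neq0$; since the eigenvalue $c$ is real this forces $\alpha=0$ and $c=-\beta\neq0$, and after rescaling $X=g\wedge h+w$. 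Conjugating by a suitable element of the (unipotent) one-parameter group $\exp(tw)$ one may assume $w=0$, and a rotation of the plane $P$ then moves $v$ into the line $\R(f\wedge(g+h))$; in the resulting orthonormal basis $\G=\mathrm{span}\{f\wedge g+f\wedge h,\;g\wedge h\}$, which is case 1.

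The step I expect to be the main obstacle is the first one: using Proposition \ref{pr4} together with the hypothesis $\dim\G\geq2$ to rule out the degenerate orthogonal decompositions of $V$ and force the existence of a $\G$-invariant totally isotropic line. Once that line is available the change of basis and the computation of its stabilizer are routine, and the passage to case 1 is a short computation inside the four-dimensional parabolic.
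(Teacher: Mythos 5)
Your proof is correct, and its first half (the orthogonal splitting $V=E\oplus W$ with $W$ the indecomposable Lorentzian summand, the case analysis on $\dim W$, and the use of Propositions \ref{co1} and \ref{pr4} to exclude $\dim W=1$ and to handle $\dim W=2$) is exactly the paper's argument. Where you diverge is in how the normal forms are produced: the paper, in each remaining case, builds an adapted orthonormal basis by hand (choosing $\bar d$ with $\langle d,\bar d\rangle=1$, setting $g,h$, and reading off the coefficients), so that case 1 comes out directly from the $\dim W=3$ situation and case 2 from $\dim W=2$ and $W=V$; you instead extract from the case analysis only the existence of a $\G$-invariant null line, prove once that its stabilizer in $\mathrm{so}(V)$ is precisely $\mathrm{span}\{e\wedge f,\,e\wedge(g+h),\,f\wedge(g+h),\,g\wedge h\}$ (which already gives the conclusion, since case 1 is formally subsumed by case 2), and then recover the sharp form of case 1 by classifying the two-dimensional non-abelian subalgebras of this parabolic up to conjugation by isometries ($\exp$ of the nilradical plus a rotation of the spacelike plane). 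Your reorganization makes transparent the invariant content (stabilization of a null direction) and tells you exactly when case 1 occurs, at the cost of an extra algebraic normalization inside the parabolic; the paper's route is more computational but pins the basis and the precise span of $\G$ within each geometric case without any conjugation argument. The eigenvalue argument forcing $\alpha=0$, the unipotent conjugation killing $w$, and the stabilizer computation are all sound; the only blemish is the redundant aside in the $\dim W=3$ case about $W$ possibly splitting (it cannot, since you chose $W$ indecomposable, and your phrasing there overlooks the possibility of a $2$-dimensional Lorentzian summand, though $\mathrm{so}(1,1)$ being one-dimensional the contradiction would persist).
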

  
  \begin{proof} 
  The Lie algebra $\G$ has a natural representation in $\mathrm{so}(V)$ and the space $V$ splits orthogonally as $V=E\oplus W$, where $W$ is an indecomposable Lorentzian vector space and $E$ is an Euclidean vector space. By virtue of Proposition \ref{pr4}, we distinguish four cases:
  \begin{enumerate}\item $\dim W=1$. In this case, $\G$ act trivially on $W$ and, by virtue of Proposition \ref{co1}, there exists $\la\in\G^*$ and an orthonormal basis $(e,f,g)$ of $E$ such that, for any $u\in\G$, $u=\la(u)f\wedge g$. This is impossible since $\dim\G\geq2$. 
   \item $\dim W=2$. In this case, according to Propositions \ref{co1} and \ref{pr4} there exists $\mu,\la\in\G^*$, an orthonormal basis $(e,f)$ of $E$ and orthonormal basis $(g,h)$ of $V$ such that, for any $u\in\G$, $u=\mu(u)e\wedge f+\la(u)g\wedge h$,  hence $\dim\G=2$ and we get the second case.  
   \item $\dim W=3$. In this case $\G$ act trivially on $E$ and, according to Proposition \ref{pr4}, there exists $\la\in\G^*$ such $V_\la=\R d$ is a totally vector subspace of $W$. Let $V_\la^\perp$ its orthogonal in $W$.  The quotient $V_\la^\perp/V_\la$ is and Euclidean 1-dimensional vector space and hence the quotient action of  $\G$ on it trivial. So there exists $\mu\in\G^*$ such that for any $u\in\G$ and any $x\in V_\la^\perp$, $u(x)=\mu(u)d$. Choose an unitary vector $e\in E$, an unitary vector $f$ in $V_\la^\perp$ and an isotropic vector $\bar{d}$ orthogonal to $f$ and satisfying $\langle d,\bar{d}\rangle=1$. Put $g=\frac{1}{\sqrt{2}}(d+\bar{d})$ and $h=\frac{1}{\sqrt{2}}(d-\bar{d})$. $\B=(e,f,g,h)$ is an orthonormal basis of $V$ and, for any $u\in\G$,  $u=-\frac{1}{\sqrt{2}}\mu(u)(f\wedge g+f\wedge h)-\la(u)g\wedge h$.
      So $\dim\G=2$ and we get the first case. 
    \item $ W=V$. In this case,  according to Proposition \ref{pr4}, there exists $\la\in\G^*$ such $V_\la=\R d$ is a totally vector subspace of $V$.   The quotient $V_\la^\perp/V_\la$ is a 2-dimensional Euclidean vector space and we can use Proposition \ref{co1} for the quotient action of  $\G$ on $V_\la^\perp/V_\la$. So there exists $\mu,\nu_1,\nu_2\in\G^*$ and a couple of unitary vectors $(e,f)$ in $V_\la^\perp$ which are orthogonal such that, for any $u\in\G$, $u(e)=\mu(u)f+\nu_1(u)d$  and $u(f)=-\mu(u)e+\nu_2(u)d$. Choose an isotropic vector $\bar{d}$ orthogonal to $e$ and $f$ and satisfying $\langle d,\bar{d}\rangle=1$. Put $g=\frac{1}{\sqrt{2}}(d+\bar{d})$ and $h=\frac{1}{\sqrt{2}}(d-\bar{d})$. $\B=(e,f,g,h)$ is an orthonormal basis of $V$ and one can check easily that, for any $u\in\G$, $u\in\mathrm{span}\{e\wedge f,e\wedge g+e\wedge h,f\wedge g+f\wedge h,g\wedge h\}$.  \qedhere

  \end{enumerate}

  \end{proof}

\subsection{Semi-symmetric curvature tensors on Lorentzian vector spaces}

A {\it curvature tensor} on   a pseudo-Euclidean vector space $(V,\prs)$ is a bilinear map $\mathrm{K}:V\times V\too\mathrm{so}(V)$ satisfying:
\begin{enumerate}\item for any $u,v\in V$, $\mathrm{K}(u,v)=-\mathrm{K}(v,u)$, 
\item for any $u,v,w\in V$, $\mathrm{K}(u,v)w+\mathrm{K}(v,w)u+\mathrm{K}(w,u)v=0$.
\end{enumerate} These relations imply
\begin{equation}\label{k3}
\langle K(a,b)u,v\rangle=\langle K(u,v)a,b\rangle,\quad a,b,u,v\in V.
\end{equation}
The Ricci curvature associated to $\mathrm{K}$ is the symmetric bilinear form on $V$ given by $\mathrm{r}(u,v)=\tr(\tau(u,v))$, where $\tau(u,v):V\too V$ is given by $\tau(u,v)(a)=\mathrm{K}(u,a)v$. The  Ricci operator is the endomorphism $\mathrm{Ric}:V\too V$ given by  $\langle
\mathrm{Ric}(u),v\rangle =\mathrm{r}(u,v)$.  We call $\Ku$ Einstein (resp. Ricci isotropic) if $\Ri=\la\mathrm{Id}_V$ (resp. $\Ri\not=0$ and $\Ri^2=0$).
Put ${\mathfrak{h} }(\Ku)=\mathrm{span}\{\Ku(u,v)/\;u,\;v\in \G\}$.
 
 A curvature tensor $\mathrm{K}$ is called {\it semi-symmetric} if 
 \begin{equation}\label{semi}
 [\mathrm{K}(u,v),\mathrm{K}(a,b)]=\mathrm{K}(\mathrm{K}(u,v)a,b)+\mathrm{K}(a,\mathrm{K}(u,v)b),\quad u,v,a,b\in V.
 \end{equation} It is easy to see that if $\mathrm{K}$ is semi-symmetric then its Ricci operator satisfies:
\begin{equation}\label{semi1} \mathrm{K}(u,v)\circ\mathrm{Ric}=\mathrm{Ric}\circ \mathrm{K}(u,v),\quad u,v\in V.
\end{equation} 
If $\mathrm{K}$ is semi-symmetric then  ${\mathfrak{h} }(\Ku)$ is a Lie subalgebra of $\mathrm{so}(V)$ called  {\it primitive holonomy algebra} of $\Ku$. It is an immediate consequence of \eqref{der} that the operator $\Ku$ given by $\Ku(u,v)=\la u\wedge v$ is semi-symmetric and  $\dim{\mathfrak{h} }(\Ku)=\dim(\mathrm{so}(V))$. The converse is also true.

\begin{pr}\label{constant} Let $\Ku$ be a semi-symmetric { curvature tensor} on   a pseudo-Euclidean vector space $(V,\prs)$. Then $\dim\mathfrak{h}(\Ku)=\dim(\mathrm{so}(V))$ if and only if there exists a constant $\la\not=0$ such that $\Ku(u,v)=\la u\wedge v$, for any $u,v\in V$.

\end{pr}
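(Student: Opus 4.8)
The plan is to translate the semi-symmetry condition \eqref{semi} into an equivariance property of $\Ku$ regarded as an endomorphism of $\Lambda^2V$. One implication is the remark made just before the statement: if $\Ku(u,v)=\la u\we v$ with $\la\neq0$, then, since the endomorphisms $u\we v$ span $\mathrm{so}(V)$, one gets $\h(\Ku)=\mathrm{so}(V)$, hence $\dim\h(\Ku)=\dim\mathrm{so}(V)$; so it remains to prove the converse.

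First I would identify $\mathrm{so}(V)$ with $\Lambda^2V$ via $u\we v$ and introduce $\widehat\Ku:\Lambda^2V\too\Lambda^2V$ defined by $\widehat\Ku(u\we v)=\Ku(u,v)$; it is symmetric for the inner product induced on $\Lambda^2V$ because of \eqref{k3}. Under this identification, \eqref{der} says precisely that the adjoint action of $A\in\mathrm{so}(V)$ on $\mathrm{so}(V)$ is the natural action of $A$ on $\Lambda^2V$. Taking $A=\Ku(u,v)$ and $B=a\we b$ in \eqref{semi}, its right-hand side is $\widehat\Ku(A\cdot B)$ and its left-hand side is $A\cdot\widehat\Ku(B)$; since the $\Ku(u,v)$ span $\h(\Ku)$ and the $a\we b$ span $\Lambda^2V$, this shows that $\Ku$ is semi-symmetric if and only if
\[ A\cdot\widehat\Ku(B)=\widehat\Ku(A\cdot B)\qquad\text{for all }A\in\h(\Ku),\ B\in\Lambda^2V. \]
Hence, when $\dim\h(\Ku)=\dim\mathrm{so}(V)$, i.e. $\h(\Ku)=\mathrm{so}(V)$, the endomorphism $\widehat\Ku$ commutes with the whole $\mathrm{so}(V)$-action on $\Lambda^2V$; in other words, $\widehat\Ku$ is an endomorphism of the adjoint representation of $\mathrm{so}(V)$.

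Next I would use the structure of $\Lambda^2V$ as an $\mathrm{so}(V)$-module. If $n=\dim V\neq4$, the complexification of $\Lambda^2V$ is irreducible over $\mathrm{so}(n,\C)$ — it is the adjoint module of the simple Lie algebra $\mathrm{so}(n,\C)$ — so Schur's lemma forces $\widehat\Ku$ to be scalar; being real it is a real scalar $\la$, and $\Ku(u,v)=\la u\we v$ (the cases $n\le2$ being trivial). The delicate case is $n=4$, where the commutant of the $\mathrm{so}(V)$-action on $\Lambda^2V$ is two-dimensional, with basis $\mathrm{Id}$ together with the Hodge star ${*}$ (which is $\mathrm{so}(V)$-equivariant, being built out of the metric and an orientation), so that $\widehat\Ku=a\,\mathrm{Id}+b\,{*}$ with $a,b\in\R$. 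To discard the term $b\,{*}$ I would bring in the first Bianchi identity, which has not yet been used: in dimension $4$ one has $\langle{*}(x\we y),z\we t\rangle\,\mathrm{vol}=x\we y\we z\we t$, so the $4$-tensor $(x,y,z,t)\mapsto\langle{*}(x\we y),z\we t\rangle$ is totally skew-symmetric, whence its cyclic sum over $x,y,z$ equals three times itself and cannot vanish; since $\Ku$ satisfies the Bianchi identity and so does the tensor attached to $\mathrm{Id}$, necessarily $b=0$. Thus again $\Ku(u,v)=a\,u\we v$, and $a\neq0$ since $a=0$ would give $\Ku\equiv0$ and $\h(\Ku)=\{0\}\neq\mathrm{so}(V)$.

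The main obstacle is exactly the dimension-$4$ case: one must notice that $\Lambda^2V$ ceases to be $\mathrm{so}(V)$-irreducible (the self-dual/anti-self-dual splitting, reflecting that $\mathrm{so}(4)$ is not simple) and then rule out the extra Hodge-star component; this is the one and only place where the Bianchi identity is needed. In all other dimensions the statement is an immediate consequence of Schur's lemma once semi-symmetry has been rephrased as the equivariance of $\widehat\Ku$.
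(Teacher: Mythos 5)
Your proof is correct, but it follows a genuinely different route from the paper's. The paper argues by brute force: it fixes an orthonormal basis, observes that the hypothesis $\dim\mathfrak{h}(\Ku)=\dim\mathrm{so}(V)$ makes the family $\Ku(e_i,e_j)$, $i<j$, a basis of $\mathrm{so}(V)$, and then identifies coefficients in the semi-symmetry identity (together with the algebraic Bianchi identity) to kill all structure constants $\Ga_{mnq}^s$ except those giving $\Ku(e_m,e_n)e_n=\la e_m$, finally checking that $\la$ is independent of the indices. You instead rephrase semi-symmetry as the statement that the operator $\widehat\Ku$ on $\wedge^2V$ intertwines the $\mathfrak{h}(\Ku)$-action, so that under the hypothesis it commutes with the full $\mathrm{so}(V)$-action on its adjoint module; Schur's lemma (applied to the complexification, correctly, to get a real scalar) settles every dimension except $4$, where you identify the commutant as $\mathrm{span}\{\mathrm{Id},*\}$ and use the first Bianchi identity — via the total skew-symmetry of the $4$-tensor attached to $*$ — to discard the Hodge component; the nonvanishing of $\la$ then follows since $\la=0$ would give $\mathfrak{h}(\Ku)=0$. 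Both arguments are complete. The paper's computation is elementary, self-contained and uniform in all dimensions $\geq3$ and signatures; yours is shorter and more conceptual, and it isolates exactly why dimension $4$ is exceptional (the splitting of $\wedge^2V$ underlying the self-dual/anti-self-dual and Petrov machinery the paper uses later), at the price of invoking standard but unproved facts: absolute irreducibility of the adjoint representation of $\mathrm{so}(n,\C)$ for $n=3$ and $n\geq5$, and the computation of the (real) commutant in dimension $4$, including the Lorentzian case where $*$ is a complex structure rather than an involution. These points are routine, but a fully self-contained write-up should include them.
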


\begin{proof}Suppose that $\dim\mathfrak{h}(\Ku)=\dim(\mathrm{so}(V))$. If $\dim V=2$ the result is obvious so we suppose  $\dim V\geq3$.
	Let $\B=(e_i)_{i=1}^n$ an orthonormal basis of $V$ with $\e_i=\langle e_i,e_i\rangle=\pm1$.
	Put
	$\Ku(e_m,e_n)e_q=\sum_s\Ga_{mnq}^se_s$. By virtue of \eqref{semi}, for any $m,n,q\in\{1,\ldots,n\}$, 
	\[ [\Ku(e_m,e_n),\Ku(e_n,e_q)]=\Ku(\Ku(e_m,e_n)e_n,e_q)+\Ku(e_n,\Ku(e_m,e_n)e_q)=- \Ku(\Ku(e_n,e_q)e_m,e_n)-\Ku(e_m,\Ku(e_n,e_q)e_n).\]So
	\[ \Ku(\Ku(e_m,e_n)e_n,e_q)+\Ku(e_n,\Ku(e_m,e_n)e_q)+ \Ku(\Ku(e_n,e_q)e_m,e_n)+\Ku(e_m,\Ku(e_n,e_q)e_n)=0. \]
	This is equivalent to
	\[ \sum_s\left(\Ga_{mnn}^s\Ku(e_s,e_q)+(\Ga_{nqm}^s-\Ga_{mnq}^s)\Ku(e_s,e_n)
	-\Ga_{nqn}^s\Ku(e_s,e_m)   \right)=0. \]
	Suppose $m\not=n$ and $q\notin\{m,n\}$. Since $(\Ku(e_i,e_j))_{1\leq i<j\leq d}$ is basis of $\mathrm{so}(V)$,
	 we get, for any $s\notin\{m,n,q\}$, $\Ga_{mnn}^s=\Ga_{mnq}^s-\Ga_{nqm}^s=\Ga_{nqn}^s=0$ and 
	\[ \sum_{s\in\{m,n,q\}}\left(\Ga_{mnn}^s\Ku(e_s,e_q)-\Ga_{mnq}^s\Ku(e_s,e_n)
		+\Ga_{nqm}^s\Ku(e_s,e_n)-\Ga_{nqn}^s\Ku(e_s,e_m)   \right)=0. \]
Note that $\Ga_{mnq}^q=0$ and hence		
\[
\Ga_{mnq}^m\Ku(e_n,e_m)-(\Ga_{nqn}^q+\Ga_{mnn}^m)\Ku(e_q,e_m)
+\Ga_{nqm}^q \Ku(e_q,e_n)=0. \]
This shows that $\Ga_{mnq}^m=0$ and $\Ga_{mnn}^m=\Ga_{qnn}^q$. So we have shown that,
for any $m\not=n$ and  $q\notin\{m,n\}$, 
$$\Ga_{mnq}^m=0,\;\Ga_{mnn}^m=\Ga_{qnn}^q,\;\Ga_{mnn}^s=0\esp \Ga_{mnq}^s=\Ga_{nqm}^s \quad s\notin\{m,n,q\}.$$
Moreover, the algebraic Bianchi identity implies $\Ga_{mnq}^s+\Ga_{nqm}^s+\Ga_{qmn}^s=3\Ga_{mnq}^s=0$ and hence 
$\Ga_{mnq}^s=0$ for $s\notin\{m,n,q\}$. Hence
$$\Ku(e_m,e_n)e_n=\Ga_{mnn}^me_m\esp \Ku(e_n,e_m)e_q=0.$$ Since $\Ga_{mnn}^m=\Ga_{qnn}^q$, we can put $\la_n=\e_n\Ga_{mnn}^m$. Let show that $\la_n$ doesn't depend on $n$. Since $\langle \Ku(e_m,e_n)e_n,e_m\rangle=\langle \Ku(e_n,e_m)e_m,e_n\rangle$, we get $\e_m\Ga_{mnn}^m=\e_n\Ga_{nmm}^n$. This shows that $\la_n=\la_m=\la$. Finally, we get that $\Ku(u,v)=\la u\wedge v$ which completes the proof.
\end{proof}

Remark that if $\dim V=2$ then any curvature tensor is semi-symmetric so we will suppose that $\dim V\geq3$.

 \begin{pr}\label{pr1}
  Let $\mathrm{K}$ be a  curvature tensor on a Lorentzian space $(V,\prs)$ satisfying \eqref{semi1}. Then its Ricci operator is either of type $\{\mathrm{diag}\}$ or $\{n,02\}$. In particular, all its eigenvalues are real.
 \end{pr}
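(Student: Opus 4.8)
The plan is to apply Theorem~\ref{reduction} to the self-adjoint operator $\Ri$: up to an orthonormal change of basis it has one of the four forms listed there, so it suffices to exclude types $\{n-2,z\bar z\}$ and $\{n,\al3\}$ and to show that in type $\{n,\al2\}$ one necessarily has $\al=0$. Two facts are used throughout. First, by \eqref{semi1} every $\Ku(u,v)$ commutes with $\Ri$, hence with every polynomial in $\Ri$, so each $\Ku(u,v)$ preserves all the subspaces $\ker(\Ri-\al)^k$ and $\mathrm{Im}(\Ri-\al)^k$. Second, $\Ri$ is a contraction of $\Ku$, so \eqref{k3} and the algebraic Bianchi identity turn linear identities for $\Ku$ into identities for $\Ri$; in particular, if $\Ku(u,a)v=\psi(a)\,w+Qa$ for a linear form $\psi$, a vector $w$ and a traceless $Q$, then $\mathrm{r}(u,v)=\tr(\tau(u,v))=\psi(w)$.

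First I would dispose of type $\{n-2,z\bar z\}$. Then $\Ri$ preserves a nondegenerate $2$-plane $P$ on which it has a non-real eigenvalue $a+ib$, $b\neq0$; over $\C$ the eigenline $\C w\subset P^{\C}$ is orthogonal to $\C\bar w$ (distinct eigenvalues of a self-adjoint operator are orthogonal), and $\langle w,w\rangle\neq0$ since $P^{\C}$ is nondegenerate. Commuting with $\Ri$, each $\Ku(u,v)$ preserves $\C w$ and hence acts on it by a scalar; being skew-symmetric on the nondegenerate line $\C w$, this scalar is $0$, and likewise on $\C\bar w$. Therefore $\Ku(u,v)$ vanishes on $P$ for all $u,v$, so \eqref{k3} gives $\Ku(x,\cdot)=0$ for every $x\in P$, and contracting yields $\Ri|_{P}=0$, contradicting $b\neq0$.

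For the two remaining types there is a common start. In each, $\Ri$ has a one-dimensional \emph{isotropic} eigenspace $\R d$, $\Ri d=\al d$, and $\R d$ coincides with $\ker(\Ri-\al)\cap\mathrm{Im}(\Ri-\al)^{k}$ ($k=1$ for type $\{n,\al2\}$, $k=2$ for type $\{n,\al3\}$), hence is $\Ku(u,v)$-invariant: $\Ku(u,v)d=p(u,v)d$ for a skew bilinear form $p$. Feeding this into \eqref{k3} gives $\Ku(d,c)=\langle d,c\rangle\,P$, where $\langle Pa,b\rangle=p(a,b)$; picking $c$ with $\langle d,c\rangle=1$ we get $P=\Ku(d,c)$, which by \eqref{semi1} commutes with $\Ri$. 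For type $\{n,\al2\}$, choosing $c=e$ with $(\Ri-\al)e=d$ and $\langle d,e\rangle=1$, the $d$-component of the identity $[\Ri,P]e=0$ gives $p(d,e)=0$; and since $\Ku(d,a)e=p(e,a)d+Pa$ by Bianchi, $\al=\langle\Ri d,e\rangle=\mathrm{r}(d,e)=p(e,d)=-p(d,e)=0$. For type $\{n,\al3\}$, with $(\Ri-\al)f_2=d$, $(\Ri-\al)f_3=f_2$, $\langle d,f_3\rangle=\langle f_2,f_2\rangle=1$ and $P=\Ku(d,f_3)$, the $d$-component of $[\Ri,P]f_2=0$ gives $p(d,f_3)=0$, then the $d$-component of $[\Ri,P]f_3=0$ gives $p(f_2,f_3)=0$, and the same trace computation gives $\al=\mathrm{r}(d,f_3)=-p(d,f_3)=0$.

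The hard part will be to exclude type $\{n,\al3\}$ completely, now knowing $\al=0$, i.e.\ $\Ri d=0$, $\Ri f_2=d$, $\Ri f_3=f_2$. Writing the minimal polynomial of $\Ri$ as $t^{3}s(t)$ with $s(0)\neq0$, the operator $\Ri^{2}s(\Ri)$ is a polynomial in $\Ri$ and equals $s(0)$ times the operator $x\mapsto\langle d,x\rangle d$; commuting it with $\Ku(u,v)$ and using $\Ku(u,v)d=p(u,v)d$ forces $p\equiv0$, hence $\Ku(a,b)d=0$ for all $a,b$. Now $\Ku(u,v)$ preserves $\mathrm{span}(d,f_2)=\ker\Ri^{2}\cap\mathrm{Im}\Ri$, so skew-symmetry forces $\Ku(u,v)f_2=\lambda(u,v)d$; by \eqref{k3} again $\Ku(f_2,c)=\langle d,c\rangle\,L$ with $L$ skew-symmetric and $L=\Ku(f_2,f_3)$. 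Since $\Ku(f_2,a)v=\langle d,a\rangle Lv$, contracting gives $\langle d,v\rangle=\langle\Ri f_2,v\rangle=\mathrm{r}(f_2,v)=\langle d,Lv\rangle=-\langle Ld,v\rangle$ for all $v$, so $Ld=-d$; but $L=\Ku(f_2,f_3)$ annihilates $d$, a contradiction. Hence $\Ri$ has type $\{\mathrm{diag}\}$ or $\{n,02\}$, and in either case all eigenvalues are real. The delicate points in this last step are extracting $p\equiv0$ from the right polynomial in $\Ri$ --- the factor $s$ being needed because the eigenvalue $0$ may also occur in the diagonalizable part of $\Ri$ --- and then combining it with a second, independent contraction of $\Ku$.
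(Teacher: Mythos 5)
Your proof is correct and follows essentially the same strategy as the paper: reduce to the four normal forms of Theorem~\ref{reduction}, use the fact that \eqref{semi1} makes every $\Ku(u,v)$ preserve the canonical subspaces attached to $\Ri$, and then compute appropriate entries of $\mathrm{r}$ as traces of rank-one-plus-skew operators to kill $b$ in type $\{n-2,z\bar z\}$, force $\al=0$ in type $\{n,\al2\}$, and contradict the entry $\mathrm{r}(f_2,f_3)=1$ in type $\{n,\al3\}$. Your packaging (the skew form $p$, the operators $P$, $L$ and the rank-one polynomial $\Ri^{2}s(\Ri)$) is heavier than the paper's direct trace computations, but every step checks out.
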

 
 \begin{proof} Since $\mathrm{Ric}$ is a symmetric endomorphism of $(V,\prs)$ then there exists a basis $\B$ of $V$ such that the matrices of $\mathrm{Ric}$ and $\prs$ in $\B$ have one of the forms listed in the statement of  Theorem \ref{reduction}.
 \begin{enumerate}\item Suppose that the matrices of $\mathrm{Ric}$ and $\prs$ are of type  $\{n-2,z\bar{z}\}$. Put $\B=(e_1,\ldots,e_{n-1},e,\overline{e})$. Then, for $i=1,\ldots,n-2$,
 \[ \mathrm{Ric}(e_i)=\al_i e_i,\;\mathrm{Ric}(e)=ae-b\overline{e}\esp
 \mathrm{Ric}(\overline{e})=be+a\overline{e},\quad b\not=0.  \]
 This shows that the sum of the eigenspaces associated to the real eigenvalues of $\mathrm{Ric}$ is $E=\mathrm{span}\{e_1,\ldots,e_{n-2}\}$. From \eqref{semi1}, we can deduce that for any $u,v\in V$, $\mathrm{K}(u,v)$ leaves invariant $E$ and hence its orthogonal $E^\perp=\mathrm{span}\{e,\overline{e} \}$.
 So
 \begin{eqnarray*}
 b&=&\langle \mathrm{Ric}(\overline{e}),e\rangle
 =\langle \mathrm{K}(\overline{e},e)e,e\rangle-\langle \mathrm{K}(\overline{e},\overline{e})e,\overline{e}\rangle+\sum_{i=1}^{n-2}\langle \mathrm{K}(\overline{e},e_i)e,e_i\rangle=0,
 \end{eqnarray*}which  contradicts the fact that $b\not=0$.
 \item Suppose that the matrices of $\mathrm{Ric}$ and $\prs$ are of   type $\{n,\al2\}$. Put  $\B=(e_1,\ldots,e_{n-2},e,\overline{e})$ and remark that, for $i=1,\ldots,n-2$,
  \[ \mathrm{Ric}(e_i)=\al_i e_i,\;\mathrm{Ric}(e)=\al e,\;
  \mathrm{Ric}(\overline{e})= e+\al\overline{e}\esp \langle e,e\rangle=
  \langle \overline{e},\overline{e}\rangle=0,\langle \overline{e},{e}\rangle=1.   \]
  This shows that $\mathrm{Ric}$ has only real eigenvalues and the sum of the  associated eigenspaces is $E=\mathrm{span}\{e,e_1,\ldots,e_{n-2}\}$. From \eqref{semi1},
   we can deduce that for any $u,v\in V$, $\mathrm{K}(u,v)$ leaves invariant $E$. We have then
  \begin{eqnarray*}
  \al &=&\langle \mathrm{Ric}(\overline{e}),{e}\rangle
  =\langle \mathrm{K}(\overline{e},e){e},\overline{e}\rangle+\langle \mathrm{K}(\overline{e},\overline{e}){e},{e}\rangle+\sum_{i=1}^{n-2}\langle \mathrm{K}(\overline{e},e_i){e},e_i\rangle=\langle \mathrm{K}(\overline{e},e){e},\overline{e}\rangle.
  \end{eqnarray*}On the other hand,
  \begin{eqnarray*}
  \langle \mathrm{K}(\overline{e},e){e},\overline{e}\rangle&=&
  \langle \mathrm{K}(\overline{e},e)(\mathrm{Ric}(\overline{e})-\al\overline{e}),\overline{e}\rangle
 =\langle \mathrm{K}(\overline{e},e)\circ\mathrm{Ric}(\overline{e}),\overline{e}\rangle
 \stackrel{\eqref{semi1}}=\langle \mathrm{Ric}\circ\mathrm{K}(\overline{e},e)\overline{e},\overline{e}\rangle
 =\langle \mathrm{K}(\overline{e},e)\overline{e},\mathrm{Ric}(\overline{e})\rangle\\
 &=&\langle \mathrm{K}(\overline{e},e)\overline{e},e+\al \overline{e} \rangle
 =\langle \mathrm{K}(\overline{e},e)\overline{e},{e}\rangle
 =-\langle \mathrm{K}(\overline{e},e){e},\overline{e}\rangle.
  \end{eqnarray*}So $\al=0$.
 \item Suppose that the matrices of $\mathrm{Ric}$ and $\prs$ are of   type $\{n,\al3\}$. Put  $\B=(e_1,\ldots,e_{n-3},e,f,\overline{e})$ and remark that, for $i=1,\ldots,n-3$,
   \[ \mathrm{Ric}(e_i)=\al_i e_i,\;\mathrm{Ric}(e)=\al e,\;
   \mathrm{Ric}(f)= e+\al f\esp \mathrm{Ric}(\overline{e})= f+\al\overline{e} .  \]
   This shows that $\mathrm{Ric}$ has only real eigenvalues and the sum of the  associated eigenspaces is $E=\mathrm{span}\{e,e_1,\ldots,e_{n-3}\}$. From \eqref{semi1},
    we can deduce that for any $u,v\in V$, $\mathrm{K}(u,v)$ leaves invariant $E$. We have then
   \begin{eqnarray*}
   \al &=&\langle \mathrm{Ric}(\overline{e}),{e}\rangle
   =\langle \mathrm{K}(\overline{e},e){e},\overline{e}\rangle+
   \langle \mathrm{K}(\overline{e},f){e},f\rangle+\langle \mathrm{K}(\overline{e},\overline{e}){e},{e}\rangle+\sum_{i=1}^{n-3}\langle \mathrm{K}(\overline{e},e_i){e},e_i\rangle=\langle \mathrm{K}(\overline{e},e){e},\overline{e}\rangle.
   \end{eqnarray*}Furthermore,
   \begin{eqnarray*}
   \langle \mathrm{K}(\overline{e},e){e},\overline{e}\rangle&=&
   \langle \mathrm{K}(\overline{e},e)(\mathrm{Ric}(f)-\al f),\overline{e}\rangle\\
  &=&\langle \mathrm{K}(\overline{e},e)\circ\mathrm{Ric}(f),\overline{e}\rangle
  -\al\langle \mathrm{K}(\overline{e},e)f,\overline{e}\rangle\\
  &\stackrel{\eqref{semi1}}=&\langle \mathrm{K}(\overline{e},e)f,\mathrm{Ric}(\overline{e})\rangle
    -\al\langle \mathrm{K}(\overline{e},e)f,\overline{e}\rangle\\
   &=&\langle \mathrm{K}(\overline{e},e)f,f+\al\overline{e} \rangle
       -\al\langle \mathrm{K}(\overline{e},e)f,\overline{e}\rangle\\
       &=&0.
   \end{eqnarray*}So $\al=0$. Thus
   \begin{eqnarray*}
   1&=&\langle\mathrm{Ric}(\overline{e}),f\rangle
   =\langle \mathrm{K}(\overline{e},e){f},\overline{e}\rangle+
      \langle \mathrm{K}(\overline{e},f){f},f\rangle+\langle \mathrm{K}(\overline{e},\overline{e}){f},{e}\rangle+\sum_{i=1}^{n-3}\langle \mathrm{K}(\overline{e},e_i){f},e_i\rangle=
      \langle \mathrm{K}(\overline{e},e){f},\overline{e}\rangle.
   \end{eqnarray*}On the other hand,
   \begin{eqnarray*}
   \langle \mathrm{K}(\overline{e},e){f},\overline{e}\rangle=
   \langle \mathrm{K}(\overline{e},e)\mathrm{Ric}(\overline{e}),\overline{e}\rangle
   \stackrel{\eqref{semi1}}=\langle \mathrm{K}(\overline{e},e)\overline{e},\mathrm{Ric}(\overline{e})\rangle
   =\langle \mathrm{K}(\overline{e},e)\overline{e},f\rangle
   =-\langle \mathrm{K}(\overline{e},e){f},\overline{e}\rangle.
   \end{eqnarray*}This shows that $\langle \mathrm{K}(\overline{e},e){f},\overline{e}\rangle=0$ which contradicts what above and completes the proof.\qedhere

 \end{enumerate}

 \end{proof}

\begin{pr}\label{pr2}Let $\Ku$ be a semi-symmetric curvature tensor on a Lorentzian vector space $(V,\prs)$. Then all eigenvalues of $\mathrm{Ric}$ are real. Denote by $\al_1,\ldots,\al_r$ the non null eigenvalues and $V_1,\ldots,V_r$ the corresponding eigenspaces. Then:
\begin{enumerate}\item $V$ splits orthogonally as $V=V_0\oplus V_1\oplus\ldots\oplus V_r$, where $V_0=\ker(\mathrm{Ric})^2$,
\item for any $u,v\in V$ and $i=0,\ldots,r$, $\Ku(u,v)(V_i)\subset V_i$,
\item for any $i,j=0,\ldots,r$ with $i\not=j$, $\Ku(V_i,V_j)=0$,
\item for any $i=1,\ldots,r$, $\dim V_i\geq2$.

\end{enumerate}

\end{pr}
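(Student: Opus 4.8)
The plan is to bootstrap everything from Proposition \ref{pr1}. A semi-symmetric curvature tensor satisfies \eqref{semi1}, so that proposition applies and $\Ri$ is a symmetric endomorphism whose matrix, together with that of $\prs$, is of type $\{\mathrm{diag}\}$ or of type $\{n,02\}$; in particular all its eigenvalues are real, which is the first assertion. To get (1) I would exhibit the decomposition directly from an adapted basis. In the $\{\mathrm{diag}\}$ case an orthonormal eigenbasis of $\Ri$ writes $V$ as the orthogonal sum of $\ker\Ri$ and the eigenspaces $V_i=\ker(\Ri-\al_i\mathrm{Id}_V)$, $i=1,\ldots,r$, attached to the non-zero eigenvalues, and here $\ker\Ri=\ker(\Ri)^2=:V_0$. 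In the $\{n,02\}$ case I would use the basis $\B=(e_1,\ldots,e_{n-2},e,\overline e)$ constructed in the proof of Proposition \ref{pr1}, where $\Ri e_j=\al_je_j$, $\Ri e=0$, $\Ri\overline e=e$: grouping the $e_j$ by eigenvalue, those with $\al_j\neq0$ span the $V_i$, while $V_0:=\ker(\Ri)^2$ is spanned by $e,\overline e$ and the $e_j$ with $\al_j=0$. In $\B$ the $e_j$ are orthonormal, $e$ and $\overline e$ are isotropic with $\langle e,\overline e\rangle=1$ and orthogonal to every $e_j$, so these subspaces are pairwise orthogonal and $V=V_0\oplus V_1\oplus\ldots\oplus V_r$; since $V$ is non-degenerate and the sum is orthogonal, each $V_i$ is non-degenerate. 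This is the only point of (1) beyond bookkeeping.

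For (2), relation \eqref{semi1} says exactly that every $\Ku(u,v)$ commutes with $\Ri$, hence with $\Ri^2$; so $\Ku(u,v)$ preserves each eigenspace $V_i$ with $i\geq1$ and also $\ker(\Ri)^2=V_0$. Assertion (3) is where some care is needed: rather than fight with the Bianchi identity, I would use the pair symmetry \eqref{k3}. Fix $i\neq j$, $u\in V_i$, $v\in V_j$, and fix $k\in\{0,\ldots,r\}$ and $a,b\in V_k$. Then
\[ \langle\Ku(u,v)a,b\rangle\stackrel{\eqref{k3}}{=}\langle\Ku(a,b)u,v\rangle=0, \]
because $\Ku(a,b)u\in V_i$ by (2) while $v\in V_j$ and $V_i\perp V_j$ by (1). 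Hence $\Ku(u,v)$ vanishes on each $V_k$ — indeed $\Ku(u,v)(V_k)\subset V_k$ by (2) and $\prs$ is non-degenerate on $V_k$ — so $\Ku(u,v)=0$ on $V=\bigoplus_k V_k$, i.e. $\Ku(V_i,V_j)=0$.

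Finally, for (4) suppose $\dim V_i=1$ with $\al_i\neq0$, say $V_i=\R w$ and $\langle w,w\rangle=\pm1$. By (2), $\Ku(u,v)w\in\R w$ for all $u,v$, but $\Ku(u,v)\in\mathrm{so}(V)$ forces $\langle\Ku(u,v)w,w\rangle=0$, so $\Ku(u,v)w=0$ for all $u,v$. Computing $\Ri$ in an orthonormal basis $(e_a)$ with $\e_a=\langle e_a,e_a\rangle$ gives
\[ \langle\Ri w,z\rangle=\sum_a\e_a\langle\Ku(w,e_a)z,e_a\rangle\stackrel{\eqref{k3}}{=}\sum_a\e_a\langle\Ku(z,e_a)w,e_a\rangle=0 \]
for every $z\in V$, so $\Ri w=0$, contradicting $\al_i\neq0$; hence $\dim V_i\geq2$. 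The only real obstacle is getting the case analysis of Proposition \ref{pr1} right (which is already done there) and noticing that \eqref{k3} trivializes (3); the rest is routine linear algebra with the curvature identities.
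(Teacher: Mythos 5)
Your proof is correct and follows essentially the same route as the paper: (1) and (2) come from Proposition \ref{pr1} and \eqref{semi1}, (3) uses the pair symmetry \eqref{k3} exactly as in the paper's argument, and (4) rests on the same observation that a one-dimensional nondegenerate invariant subspace is annihilated by every $\Ku(u,v)$, which forces the corresponding Ricci eigenvalue to vanish. The only inessential variation is that in (4) you deduce $\Ri w=0$ outright via \eqref{k3}, whereas the paper simply computes $\langle \Ri(e),e\rangle=0$ in an adapted orthonormal basis.
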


\begin{proof}\begin{enumerate}\item This is a consequence of Proposition \ref{pr1}.
\item This statement follows from  \eqref{semi1}.
\item Let $u\in V_i$,  $v\in V_j$ and $a,b\in V$. Since $\Ku(a,b)(V_i)\subset V_i$ and $\langle V_i,V_j\rangle=0$, we get
\[ 0=\langle \Ku(a,b)u,v\rangle\stackrel{\eqref{k3}}=\langle \Ku(u,v)a,b\rangle \]and hence $\Ku(u,v)=0$.
\item Suppose that $\dim V_i=1$ for $i=1,\ldots,r$ and choose a generator $e$ of $V_i$ such that $\langle e,e\rangle=\e$ with $\e^2=1$ and complete to get an orthonormal basis $(e,e_1,\ldots,e_{n-1})$ with $\langle e_i,e_i\rangle=\e_i$, $\e_i^2=1$. For any $a,b\in V$, $\Ku(a,b)$ is skew-symmetric and leaves $V_i$ invariant so $\Ku(a,b)e=0$. Now
\[ \e\al_i=\langle \mathrm{Ric}(e),e\rangle=\e\langle \Ku(e,e)e,e\rangle+\sum_{i=1}^{n-1}\e_i\langle \Ku(e,e_i)e,e_i\rangle=0, \]
which is a contradiction and achieves the proof. \qedhere
\end{enumerate}

\end{proof}

This proposition reduces  the determination of semi-symmetric curvature tensors on Lorentzian vector spaces to the determination of three classes of semi-symmetric curvature tensors: Einstein semi-symmetric curvature tensors on an Euclidean vector space, Einstein semi-symmetric curvature tensors on a Lorentzian vector space and Ricci isotropic semi-symmetric curvature tensors on a Lorentzian vector space.

\section{Einstein and Ricci isotropic semi-symmetric curvature tensors on vector spaces of dimension less or equal to 4} \label{section3}

Remark first that if $\dim V=2$ any non null curvature tensor is Einstein semi-symmetric.
If $(V,\prs)$ is a pseudo-Euclidean vector space with $\dim V=3$ then any  curvature tensor is entirely determined by its Ricci operator  and we have
\begin{equation}\label{eq3}
\Ku(X,Y)=\frac12\tr(\Ri)X\wedge Y-X\wedge \Ri(Y)-\Ri(X)\wedge Y.
\end{equation}If $\Ku$ is a curvature tensor on a pseudo-Euclidean vector space of dimension $n\geq4$ then
\begin{equation}\label{weyl}
\Ku(X,Y)=\frac{\tr(\Ri)}{(n-1)(n-2)}X\wedge Y-\frac1{n-2}(X\wedge \Ri(Y)+\Ri(X)\wedge  Y)+W(X,Y)
\end{equation}where $W$ is the Weyl tensor. The following proposition is an immediate consequence of \eqref{eq3}.
\begin{pr}\label{pr3} Let $(V,\prs)$ be a pseudo-Euclidean vector space of dimension $3$ and $\Ku$  a curvature tensor on $(V,\prs)$. 
\begin{enumerate}\item If  $\Ri=\la\mathrm{Id}_V$  then,
for any $u,v\in V$, $\Ku(u,v)=-\frac{\la}2 u\wedge v$ and hence $\Ku$ is semi-symmetric. 
 \item If $V$ is Lorentzian and $\Ku$ is Ricci isotropic semi-symmetric    then there exists a basis $(e,f,g)$ of $V$ such that $\langle e,e\rangle=\langle f,g\rangle=1$, $\Ri(e)=\Ri(f)=0$, $\Ri(g)=f$ and
  $\Ku(e,f)=\Ku(f,g)=0$ and $\Ku(e,g)=-e\we f$.

\end{enumerate}

\end{pr}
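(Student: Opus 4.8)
The plan is to read both statements directly off formula \eqref{eq3}, which in dimension $3$ determines a curvature tensor completely from its Ricci operator.

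For part (1) I would substitute $\Ri=\la\,\mathrm{Id}_V$ into \eqref{eq3}. Since $\tr(\Ri)=3\la$, this collapses to
\[
\Ku(u,v)=\frac{3\la}{2}\,u\wedge v-\la\,u\wedge v-\la\,u\wedge v=-\frac{\la}{2}\,u\wedge v ,
\]
and then semi-symmetry is free: that the tensor $(u,v)\mapsto c\,u\wedge v$ satisfies \eqref{semi} is precisely the remark recorded just before Proposition \ref{constant}, which follows from the derivation identity \eqref{der}.

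For part (2) the one substantive step is to identify $\Ri$. Since a semi-symmetric $\Ku$ satisfies \eqref{semi1}, Proposition \ref{pr1} tells us that $\Ri$ is of type $\{\mathrm{diag}\}$ or $\{3,02\}$; the hypothesis $\Ri\neq0$, $\Ri^2=0$ kills the first possibility (a diagonalizable nilpotent symmetric endomorphism is zero), and in the normal form $\{3,02\}$ the remaining diagonal entry $\al_1$ must vanish because $\Ri^2=0$. This yields a basis $(e,f,g)$ with $\langle e,e\rangle=\langle f,g\rangle=1$ and all other inner products zero, in which $\Ri(e)=\Ri(f)=0$ and $\Ri(g)=f$. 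I expect this bookkeeping with the Petrov/Jordan normal form of $\Ri$ to be the only point requiring care.

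Finally I would close the computation: $\Ri$ nilpotent gives $\tr(\Ri)=0$, so \eqref{eq3} becomes $\Ku(X,Y)=-X\wedge\Ri(Y)-\Ri(X)\wedge Y$, and evaluating on the three basis pairs (using $f\wedge f=0$) gives $\Ku(e,f)=0$, $\Ku(f,g)=0$ and $\Ku(e,g)=-e\wedge f$, which is the asserted normal form. There is no real obstacle here; all the weight is carried by \eqref{eq3} together with Proposition \ref{pr1}.
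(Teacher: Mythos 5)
Your proposal is correct and is exactly the argument the paper intends: Proposition \ref{pr3} is stated there as an immediate consequence of \eqref{eq3}, and you fill in the substitution for part (1) (with semi-symmetry coming from \eqref{der}) and, for part (2), correctly use Proposition \ref{pr1} together with $\Ri\neq0$, $\Ri^2=0$ to pin down the normal form of $\Ri$ before reading off $\Ku$ from \eqref{eq3}. No gaps.
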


The study of Einstein or Ricci isotropic semi-symmetric curvature  tensors    in dimension four is more complicated and needs some preparation. 
Actually,  Einstein curvature tensors on a four-dimensional Lorentzian vector space have been determined by Petrov since 1954 see \cite{petrov} and \cite{besse} pp. 100. This study is based on the fact that if $V$ is a 4-dimensional Lorentzian vector space then $\wedge^2V\simeq\mathrm{so}(V)$ carries a natural complex structure $J=*$, where $*$ is   the Hodge star operator and, a curvature tensor is Einstein if its total curvature operator commutes with $J$. Using this fact, we can find an orthonormal basis of $V$ in which the expression of the curvature tensor has  normal forms called Petrov normal forms. We will recall briefly the construction of Petrov normal forms (for more details see \cite{besse} pp.100) and we will determine the Petrov normal forms of Einstein semi-symmetric curvature tensors.

Let $(V,\prs)$ be a four-dimensional Lorentzian vector space and $\Ku$ a curvature tensor. The inner product  $\prs$ induces naturally an inner product  on any $\wedge^pV$ denoted in the same way. The total curvature operator $\wi\Ku:\wedge^2V\too\wedge^2V$ is given by
   $\langle \wi\Ku(u\wedge v),w\wedge t\rangle=\langle \Ku(u,v)w,t\rangle$. Fix an orientation $\om\in\wedge^4V$ such that $\langle\om,\om\rangle=-1$ and denote by $J:\wedge^2 V\too\wedge^2V$ the Hodge star operator given by
    $\al\wedge\be=\langle J\al,\be\rangle\om$. It is a well-known fact that $J^2=-\mathrm{Id}_{\wedge^2V}$ and hence $J$ induces a complex vector space structure on $\wedge^2V$. If $(e,f,g,h)$ is an orthonormal basis with $\langle h,h\rangle=-1$ and $e\wedge f\wedge g\wedge h=\om$, one can see easily that
    \begin{equation}\label{J}
    J(e\wedge f)=-g\wedge h,\; J(e\wedge g)=f\wedge h\esp J(e\wedge h)=f\wedge g.
    \end{equation}By using these formulas, one can check easily that, for any $\al\in\wedge^2V$, $[J\al,\al]=0$ and hence
    \begin{equation}\label{comm}
    \forall\al,\be\in \wedge^2V,\quad[J\al,J\be]=-[\al,\be].
    \end{equation}Remark that this relation implies that the Nijenhuis torsion of $J$ is given by $N_J(\al,\be)=-2[\al,\be]$.

    In the basis $(e\wedge f,e\wedge g,e\wedge h,f\wedge g,f\wedge h,g\wedge h)$, we have
    \[ [\wi\Ku]=\left(\begin{array}{cccccc}k_{11}&k_{12}&k_{13}&k_{14}&k_{15}&k_{16}\\ 
         k_{12}&k_{22}&k_{23}&k_{24}&k_{25}&k_{26}\\ 
         -k_{13}&-k_{23}&k_{33}&k_{34}&k_{35}&k_{36}\\
         k_{14}&k_{24}&-k_{34}&k_{44}&k_{45}&k_{46}\\
         -k_{15}&-k_{25}&k_{35}&-k_{45}&k_{55}&k_{56}\\
         -k_{16}&-k_{26}&k_{36}&-k_{46}&k_{56}&k_{66}\end{array}\right)\esp
         [J]=\left(\begin{array}{cccccc}0&0&0&0&0&1\\
         0&0&0&0&-1&0\\
         0&0&0&-1&0&0\\
         0&0&1&0&0&0\\
         0&1&0&0&0&0\\
         -1&0&0&0&0&0\end{array}   \right).   
         \]It is a well-known fact that $\Ku$ is Einstein if and only if $\wi\Ku\circ J=J\circ\wi\Ku$ and one can check that this is equivalent to 
         $$\left\{\begin{array}{l}k_{11}=k_{66},k_{22}=k_{55},k_{33}=k_{44}\\
              k_{35}=-k_{24},\;k_{36}=k_{14},\;k_{26}=-k_{15},\\
              k_{56}=-k_{12},\;k_{46}=k_{13},\;k_{45}=-k_{23}.\end{array}   \right.$$
              From the algebraic Bianchi identity we get $k_{16}=k_{25}+k_{34}$ and the relation $\Ri=\la \mathrm{Id}_V$ gives $k_{11}+k_{22}+k_{33}=-\la$. If we put
              $$p=k_{22}-\imath k_{25},\; q=k_{33}-\imath k_{34},\;u=k_{12}-\imath k_{15},\;v=k_{13}-\imath k_{14}\esp w=k_{23}-\imath k_{24},$$we get that the matrix of $\wi\Ku$ (as a complex endomorphism) in the complex basis $\{e\wedge f,e\wedge g,e\wedge h  \}$ is
                 \begin{equation}\label{complex}[\wi\Ku]_{\C}=\left(\begin{array}{ccc}-\la-p-q&u&v\\ 
                 u&p&w\\ 
                 -v&-w&q\end{array}\right).\end{equation}
    There are three   Petrov normal forms of $\wi\Ku$ depending on the type of Jordan normal forms of $\wi\Ku$ as a complex endomorphism. Indeed,   the $\C$-linear endomorphism $\wi\Ku$ has three possible   Jordan normal  forms $$\left(\begin{array}{ccc}z_1&0&0\\ 
                     0&z_2&0\\ 
                     0&0&z_3\end{array} \right),\;\left(\begin{array}{ccc}z_1&0&0\\ 
                                          0&z_2&1\\ 
                                          0&0&z_2\end{array} \right)\quad\mbox{or} \quad\left(\begin{array}{ccc}z&1&0\\ 
                                                               0&z&1\\ 
                                                               0&0&z\end{array} \right)$$ 
    and, for each of them there exists and orthonormal basis $(e,f,g,h)$ of $V$ with $\langle e,e\rangle=-1$ such that the matrix of the $\R$-linear endomorphism $\wi\Ku$ in the basis  $\{e\wedge f,
    e\wedge g,e\wedge h,g\wedge h,h\wedge f,f\wedge g  \}$ has the form $\left( \begin{array}{cc}A&-B\\B&A\end{array}\right)$ where $z_i=\la_i+\imath\mu_i$ and
    \begin{eqnarray}
    \mathrm{Type\; I}& A=\left(\begin{array}{ccc}\la_1&0&0\\ 
                         0&\la_2&0\\ 
                         0&0&\la_3\end{array} \right),\quad B=\left(\begin{array}{ccc}\mu_1&0&0\\ 
                                              0&\mu_2&0\nonumber\\ 
                                              0&0&\mu_3\end{array} \right),\quad\mu_1+\mu_2+\mu_3=0,\\
  \label{petrov}  \mathrm{Type\; II}& A=\left(\begin{array}{ccc}\la_1&0&0\\ 
                             0&\la_2+1&0\\ 
                             0&0&\la_2-1\end{array} \right),\quad B=\left(\begin{array}{ccc}\mu_1&0&0\\ 
                                                                           1&\mu_2&1\\ 
                                                                           0&1&\mu_3\end{array} \right),\quad\mu_1+2\mu_2=0,\\
     \mathrm{Type\; III}& A=\left(\begin{array}{ccc}\la&1&0\\ 
                              1&\la&0\\ 
                              0&0&\la\end{array} \right),\quad B=\left(\begin{array}{ccc}0&0&0\\ 
                                                   0&0&1\\ 
                                                   0&1&0\end{array} \right). \nonumber                                            
    \end{eqnarray}

We can give now a precise description of Einstein semi-symmetric curvature tensors on a four-dimensional Lorentzian vector space  by giving their Petrov normal forms.

\begin{theo}\label{main1} Let $\Ku$ be a non null semi-symmetric curvature tensor on a 4-dimensional Lorentzian vector space $(V,\prs)$ such that $\Ri=\la\mathrm{Id}_V$. Then the primitive holonomy algebra $\mathfrak{h}(\Ku)$ has dimension 2 or 6 and if $\dim\mathfrak{h}(\Ku)=2$  then it is an abelian Lie algebra. Moreover, we have:
\begin{enumerate}\item If $\dim\mathfrak{h}(\Ku)=6$ then for any $u,v\in V$, $\Ku(u,v)=-\frac{\la}3u\wedge v$.

 \item If $\dim\mathfrak{h}(\Ku)=2$ and $\la\not=0$ then $\wi\Ku$ is diagonalizable  as a $\C$-linear endomorphism and its eigenvalues are $-\la$ with complex multiplicity 1 and 0 of complex multiplicity 2 and hence $\wi\Ku$ is of type I.
 \item If $\dim\mathfrak{h}(\Ku)=2$ and $\la=0$  then $\wi\Ku^2=0$ and hence $\wi\Ku$ is of type II with 0 as the only eigenvalue.

\end{enumerate}

\end{theo}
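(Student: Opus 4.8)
The plan is to analyse the complex endomorphism $\wi\Ku$ of $\wedge^2V\simeq\mathrm{so}(V)$, into which the semi-symmetry condition translates cleanly. Since $\Ri=\la\mathrm{Id}_V$, the operator $\wi\Ku$ is $\C$-linear, i.e. $\wi\Ku\circ J=J\circ\wi\Ku$. Under the canonical $\C$-linear identification $\wedge^2V\simeq\mathrm{so}(V)$ — which, by \eqref{der}, carries the derivation action of $\mathrm{so}(V)$ on $\wedge^2V$ onto the adjoint action of $\mathrm{so}(V)$ on itself — the subspace $\mathfrak{h}(\Ku)$ is exactly $\mathrm{Im}\,\wi\Ku$, because the bivectors $u\we v$ span $\wedge^2V$; hence $\mathfrak{h}(\Ku)$ is a complex subspace of $(\mathrm{so}(V),J)$, and, $\Ku$ being semi-symmetric, it is a Lie subalgebra. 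I will also use that $(\mathrm{so}(V),[\;,\;],J)$ is the simple complex Lie algebra $\mathfrak{sl}(2,\C)$ (see \cite{besse}, p.~100): its adjoint representation on itself is irreducible, and its non-zero proper complex subalgebras are, up to an automorphism, the line $\C h_0$ spanned by a semisimple element, the line $\C e_0$ spanned by a nilpotent element, and the two-dimensional Borel subalgebra $\C h_0\oplus\C e_0$; in particular there is no two-dimensional abelian subalgebra.

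The first step is to rephrase semi-symmetry. Writing $A=\Ku(u,v)$, the right-hand side $\Ku(Aa,b)+\Ku(a,Ab)$ of \eqref{semi} is the image under $\wi\Ku$ of the bivector $(Aa)\we b+a\we(Ab)$, that is, of the result of applying the derivation $A$ to $a\we b$, while the left-hand side $[A,\Ku(a,b)]$ is, by \eqref{der}, the result of applying $A$ to $\wi\Ku(a\we b)$. Hence \eqref{semi} is equivalent to
\[ [\ad_A,\wi\Ku]=0\qquad\text{for all }A\in\mathfrak{h}(\Ku), \]
where $\ad_A$ is the adjoint action of $A$ on $\mathrm{so}(V)\simeq\wedge^2V$. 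So $\wi\Ku$ is a $\C$-linear endomorphism of $\mathfrak{sl}(2,\C)$ commuting with $\ad_A$ for every $A$ in the complex subalgebra $\mathfrak{h}(\Ku)=\mathrm{Im}\,\wi\Ku$.

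Everything now depends on $d=\dim_\C\mathfrak{h}(\Ku)\in\{1,2,3\}$ ($d=0$ is excluded since $\Ku\neq0$), and $\dim_\R\mathfrak{h}(\Ku)=2d$. If $d=3$, then $\dim\mathfrak{h}(\Ku)=\dim\mathrm{so}(V)$ and Proposition \ref{constant} gives $\Ku(u,v)=\la'u\we v$ for some $\la'\neq0$; contracting (or using \eqref{weyl}) yields $\la'=-\la/3$, which is case~1. If $d=2$, then $\mathfrak{h}(\Ku)=\C h_0\oplus\C e_0$; since $\ad_{h_0}$ has three distinct eigenvalues on a standard triple $(e_0,h_0,f_0)$, commuting with it forces $\wi\Ku$ to be diagonal in $(e_0,h_0,f_0)$, and since $f_0\notin\mathrm{Im}\,\wi\Ku$ we get $\wi\Ku(f_0)=0$; then commuting with $\ad_{e_0}$ and using that $\ad_{e_0}f_0$ is a non-zero multiple of $h_0$ forces $\wi\Ku(h_0)=0$, contradicting $h_0\in\mathrm{Im}\,\wi\Ku$. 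So $d=2$ is impossible, which proves $\dim\mathfrak{h}(\Ku)\in\{2,6\}$. If $d=1$, write $\mathfrak{h}(\Ku)=\C A_0$; this one-dimensional complex subalgebra is abelian, so $\dim\mathfrak{h}(\Ku)=2$ and it is abelian, and, up to an automorphism, $A_0=h_0$ or $A_0=e_0$. In the first case, commuting with $\ad_{h_0}$ makes $\wi\Ku$ diagonal in $(e_0,h_0,f_0)$, and since $\mathrm{Im}\,\wi\Ku=\C h_0$ its only non-zero eigenvalue is some $\beta\neq0$ (on $h_0$); thus $\wi\Ku$ is $\C$-diagonalizable with eigenvalues $\beta,0,0$, and since $\tr_\C\wi\Ku=-\la$ by \eqref{complex} we get $\beta=-\la$, forcing $\la\neq0$. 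In the second case, commuting with the regular nilpotent $N=\ad_{e_0}$ makes $\wi\Ku$ a polynomial in $N$, and $\dim_\C\mathrm{Im}\,\wi\Ku=1$ forces $\wi\Ku$ to be a non-zero multiple of $N^2$, so $\wi\Ku^2=0$; then $\tr_\C\wi\Ku=0=-\la$ gives $\la=0$. Hence if $\la\neq0$ we are necessarily in the first case (case~2, $\wi\Ku$ of type~I with eigenvalues $-\la,0,0$), and if $\la=0$ in the second (case~3, $\wi\Ku^2=0$, of type~II with $0$ as sole eigenvalue).

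The delicate point is the dictionary of the first two steps: verifying that $\mathfrak{h}(\Ku)=\mathrm{Im}\,\wi\Ku$ is a complex Lie subalgebra, that \eqref{semi} is exactly the commutation $[\ad_A,\wi\Ku]=0$ on $\mathrm{Im}\,\wi\Ku$, and transporting a standard $\mathfrak{sl}(2,\C)$-triple back to an orthonormal basis of $V$ so that the resulting $\wi\Ku$ appears in one of the Petrov normal forms \eqref{petrov}. Once this is in place the case analysis is short linear algebra inside $\mathfrak{sl}(2,\C)$. Alternatively — and this is presumably closer to how the computations in the paper are organised — one substitutes the complex normal form \eqref{complex} directly into the commutation relation and solves the resulting system; this is heavier but avoids the identification with $\mathfrak{sl}(2,\C)$.
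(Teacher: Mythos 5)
Your proof is correct, but it follows a genuinely different route from the paper's. You reformulate semi-symmetry as the commutation $[\ad_A,\wi\Ku]=0$ for all $A\in\mathfrak{h}(\Ku)=\mathrm{Im}\,\wi\Ku$ (this dictionary is sound: under the metric identification $\wedge^2V\simeq\mathrm{so}(V)$, $\wi\Ku$ coincides with $(u,v)\mapsto\Ku(u,v)$ up to an irrelevant global sign, and \eqref{der} converts \eqref{semi} into exactly that commutation), and you then exploit the classical fact that $(\mathrm{so}(V),[\;,\;],J)$ is the complex Lie algebra $\mathfrak{sl}(2,\C)$, so that the whole case analysis reduces to the list of complex subalgebras of $\mathfrak{sl}(2,\C)$ and to the commutant of a regular semisimple, respectively regular nilpotent, element. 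The paper instead stays real: it proves evenness of $\dim\mathfrak{h}(\Ku)$ and abelianity/solvability from \eqref{comm} alone, invokes Proposition \ref{pr5} to put $\mathfrak{h}(\Ku)$ in a normal position, and then eliminates $\dim\mathfrak{h}(\Ku)=4$ and analyzes $\dim\mathfrak{h}(\Ku)=2$ by explicit computation with the special form of $[\wi\Ku]_{\C}$ (the rank-one condition $u^2=\la v$), which also yields the concrete normal forms reused in Remark \ref{remark1}. Your approach is shorter and more conceptual — in particular the impossibility of $\dim\mathfrak{h}(\Ku)=4$ becomes transparent (a Borel cannot be the image of an operator commuting with $\ad$ of its own image) — at the price of one extra structural input: you need full $\C$-bilinearity of the bracket with respect to $J$, i.e.\ $[J\al,\be]=J[\al,\be]$, which is strictly stronger than the identity \eqref{comm} established in the paper. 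That fact is true and classical (it follows from the $\mathrm{SO}^+(V)$-equivariance of the Hodge star, or by a one-line check on the basis formulas \eqref{J}), but since the paper's remark that $N_J(\al,\be)=-2[\al,\be]$ would, with the usual convention for the Nijenhuis tensor, contradict it, you should include that short verification rather than rest on the citation; similarly, your trace normalization $\tr_{\C}\wi\Ku=-\la$ is legitimately read off from \eqref{complex}, and your "up to automorphism" reductions are harmless because all automorphisms of $\mathfrak{sl}(2,\C)$ are inner and your conclusions (spectrum, diagonalizability, $\wi\Ku^2=0$) are conjugation-invariant, hence match the Petrov types in \eqref{petrov}.
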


\begin{proof} If $\dim\mathfrak{h}(\Ku)=6$ then the  result follows from Proposition \ref{constant}. Suppose that $\dim\mathfrak{h}(\Ku)\leq5$. Since $\wi\Ku$ commutes with $J$, $\mathfrak{h}(\Ku)$ is invariant by $J$ and hence  $\dim\mathfrak{h}(\Ku)$ is even. Moreover, if $\dim\mathfrak{h}(\Ku)=2$ then the relation \eqref{comm} implies that  $\mathfrak{h}(\Ku)$ is an abelian Lie algebra. When $\dim\mathfrak{h}(\Ku)=4$  it admits a basis of the form $\{A,JA,B,JB \}$, $A,B\in\mathrm{so}(V)$. By using \eqref{comm}, one can see easily that $[\mathfrak{h}(\Ku),\mathfrak{h}(\Ku)]=
\mathrm{span}\{[A,B],[A,JB]  \}$ and 
hence $\mathfrak{h}(\Ku)$  is solvable.
According to Proposition \ref{pr5}, there exists an orthonormal basis $(e,f,g,h)$ such that $\langle h,h\rangle=-1$ and, for any $u,v\in V$, $\Ku(u,v)\in\mathrm{span}\{e\wedge f,e\wedge g+e\wedge h,f\wedge g+f\wedge h,g\wedge h\}$. From \eqref{complex}, we get that the matrix of $\wi\Ku$ in the complex basis $\{e\wedge f,e\wedge g,e\wedge h  \}$ must have the form
\[ [\wi\Ku]_{\C}=\left(\begin{array}{ccc}-\la&u&-u\\u&-v&v\\u&-v&v\end{array}   \right). \]
Suppose $\dim\mathfrak{h}(\Ku)=4$, i.e., the complex rank of $\wi\Ku$ is 2. Since $\Ku(e,h)=-\Ku(e,g)$, $\{\Ku(e,f),\Ku(e,g),\Ku(f,g),\Ku(g,h)  \}$ is a real basis of $\mathfrak{h}(\Ku)$ and we get from \eqref{semi}
    \[ 0=[\Ku(e,g),\Ku(e,h)]=\Ku(\Ku(e,g)e,h)+\Ku(e,\Ku(e,g)h). \] 
    Or
    \[ \Ku(e,g)=(u_1+\imath u_2)e\wedge f-(v_1+\imath v_2)(e\wedge g+e\wedge h)
    =u_1e\wedge f-u_2g\wedge h-v_1(e\wedge g+e\wedge h)-v_2(f\wedge g+f\wedge h). \]So
    \[ -u_1\Ku(e,h)+v_1\Ku(g,h)+u_2\Ku(e,g)+v_2\Ku(e,f)=0. \]Hence
    $u=v=0$ which contradicts the fact that the rank of $[\wi\Ku]_{\C}$ is 2. So $\dim\mathfrak{h}(\Ku)=4$ is impossible.
    
   Suppose now that  $\dim\mathfrak{h}(\Ku)=2$. This is equivalent to $\mathrm{Rank}([\wi\Ku]_{\C})=1$ which is equivalent to $u^2=\la v$. If $\la=0$ then  $\wi\Ku$ is of type II with $\wi\Ku^2=0$. If $\la\not=0$ then
   \[ [\wi\Ku]_{\C}=\left(\begin{array}{ccc}-\la&u&-u\\u&-\la^{-1}u^2&\la^{-1}u^2\\u&-\la^{-1}u^2&\la^{-1}u^2\end{array}   \right). \]It is easy to check that this matrix is diagonalizable with $-\la$ as an eigenvalue of multiplicity 1 and 0 of multiplicity 2. This completes the proof.
   \qedhere

\end{proof}

\begin{rem}\label{remark1} Let $\Ku$ be a semi-symmetric curvature tensor on a four-dimensional Lorentzian vector space with $\Ri=\la\mathrm{Id}_V$. According to Theorem \ref{main1} and the Petrov normal forms given in \eqref{petrov}, there exists an orthonormal basis $(e,f,g,h)$ with $\langle e,e\rangle=-1$ such that:
\begin{enumerate}\item $\la\not=0$, $\Ku(e\wedge f)=-\la e\wedge f$, $\Ku(g\wedge h)=-\la g\wedge h$ and $\Ku(e,g)=\Ku(e,h)=\Ku(f,g)=\Ku(f,h)=0$.
\item $\la=0$, $\Ku(e,g)=-\Ku(f,g)=e\wedge g+f\wedge g$, $\Ku(f,h)=-\Ku(e,h)=e\wedge h+f\wedge h$ and 
$\Ku(e,f)=\Ku(g,h)=0$.
\end{enumerate}

\end{rem}

          Another important consequence of Theorem \ref{main1} is the following result which the proof is based on  Theorem 5.1 in \cite{derd}.
          
          \begin{theo}\label{theo2} Let $M$ be an Einstein  four-dimensional Lorentzian manifold with non null constant scalar curvature. Then $M$ is semi-symmetric if and only if it is locally symmetric.
          
          \end{theo}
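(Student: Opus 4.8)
The plan is to localise the problem, reduce the pointwise algebra of the curvature via Theorem \ref{main1} to two models, dispose of the ``constant curvature'' model by hand, and recognise the remaining model as the situation covered by Theorem 5.1 of \cite{derd}. The implication ``locally symmetric $\Rightarrow$ semi-symmetric'' is immediate, since $\na\Ku=0$ forces $\Ku.\Ku=0$; so only the converse requires an argument.

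Assume then that $M$ is Einstein with non null constant scalar curvature and semi-symmetric. Since $\dim M=4$, the Einstein condition reads $\Ri=\la\,\mathrm{Id}$ with $\la=\frac14\mathrm{scal}\not=0$ constant, so at every $p\in M$ the curvature $\Ku_p$ is a non null semi-symmetric algebraic curvature tensor on $(T_pM,g_p)$ with Ricci operator $\la\,\mathrm{Id}_{T_pM}$. Theorem \ref{main1} then gives, for each $p$, exactly one of the following: (a) $\dim\mathfrak{h}(\Ku_p)=6$, and then $\Ku_p(u,v)=-\frac{\la}{3}\,u\we v$; or (b) $\dim\mathfrak{h}(\Ku_p)=2$, and then, since $\la\not=0$, the complexified endomorphism $\wi\Ku_p$ is diagonalisable with eigenvalues $-\la$ (complex multiplicity $1$) and $0$ (complex multiplicity $2$). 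Set $\Ua=\{p\in M:\dim\mathfrak{h}(\Ku_p)=6\}$. Because the rank of the field $\wi\Ku$ is lower semicontinuous and $6$ is its maximal value, $\Ua$ is open; on $\Ua$ the metric has constant sectional curvature $-\frac{\la}{3}$, hence $\na\Ku=0$ there. On the open set $\Ua^\circ:=M\setminus\overline{\Ua}$ case (b) holds at every point, so after subtracting the (constant) scalar part one finds that the complexified Weyl operator $\wi W$ is diagonalisable with the \emph{constant} eigenvalues $-\frac{2\la}{3}$ and $\frac{\la}{3}$ (the latter of complex multiplicity $2$); equivalently, on $\Ua^\circ$ the Weyl tensor of $g$ is everywhere of Petrov type $D$ with constant eigenvalues (equivalently, $\wi\Ku$ is diagonalisable with constant eigenvalues). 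This is precisely the hypothesis of Theorem 5.1 of \cite{derd}, which I would invoke to conclude that $g$ is locally symmetric on each connected component of $\Ua^\circ$; thus $\na\Ku=0$ on $\Ua^\circ$ as well.

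It remains to glue. The closed set $\partial\Ua=\overline{\Ua}\setminus\Ua$ has empty interior: an interior point of $\partial\Ua$ would have an open neighbourhood $N\subset\partial\Ua\subset M\setminus\Ua$, whence $N\subset\mathrm{int}(M\setminus\Ua)=\Ua^\circ$, while $N\subset\partial\Ua\subset\overline{\Ua}$ and $\Ua^\circ\cap\overline{\Ua}=\vi$, forcing $N=\vi$. Hence $\Ua\cup\Ua^\circ=M\setminus\partial\Ua$ is open and dense, and since $\na\Ku=0$ holds on this dense set while $\na\Ku$ is continuous, we get $\na\Ku\equiv0$ on $M$, i.e. $M$ is locally symmetric. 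The step I expect to be the crux is the identification carried out in case (b): one must verify that the algebraic normal form produced by Theorem \ref{main1} when $\dim\mathfrak{h}(\Ku)=2$ — $\wi\Ku$ diagonalisable over $\C$ with eigenvalues $-\la,0,0$ at each point of $\Ua^\circ$ — matches exactly the hypothesis under which Theorem 5.1 of \cite{derd} yields local symmetry (a Petrov type $D$ Weyl tensor with constant eigenvalues, equivalently a harmonic Weyl tensor of constant norm and special algebraic type), and that Derdziński's conclusion, being local, may be applied on $\Ua^\circ$ without assuming that set connected or dense. Everything else is routine semicontinuity together with the standard fact that $\na\Ku=0$ is a closed condition.
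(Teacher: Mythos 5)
Your proposal is correct and follows essentially the same route as the paper: Theorem \ref{main1} gives pointwise diagonalisability of $\wi\Ku$ (over $\C$) with eigenvalues determined by the constant scalar curvature, and Theorem 5.1 of \cite{derd} (Theorem \ref{derd}) then yields local symmetry, its case $(c)$ being excluded because the scalar curvature is non null. Your stratification of $M$ into the constant-curvature set and its complement, with the open-dense gluing of $\na\Ku=0$, merely makes explicit a point the paper handles implicitly when it asserts that the eigenvalue functions are constant.
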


       \begin{proof} Suppose that $M$ is semi-symmetric. For any $p\in M$, $\Ku_p$ is a semi-symmetric curvature tensor on $T_pM$. According to Theorem \ref{main1}, its total curvature operator is diagonalizable as $\C$-linear endomorphism of $\wedge^2 T_pM$ with eigenvalues 0 and $-\frac\la4$ with $\la$ is the scalar curvature.  So the eigenvalues are constant and, according to Theorem 5.1 in \cite{derd}, $M$ is locally symmetric.
       \end{proof}

    We end this section by giving  a normal form of Ricci isotropic semi-symmetric  curvature tensors  on 4-dimensional Lorentzian vector spaces. 
    
    \begin{theo}\label{th22} Let $\Ku$ be a Ricci isotropic semi-symmetric curvature tensor on a 4-dimensional Lorentzian vector space $(V,\prs)$. Then  there exists a basis   $(e,f,g,h)$ such that the non vanishing products are $\langle
       e,e\rangle=\langle f,f\rangle=\langle g,h\rangle=1$
     and   \begin{equation*}
     \Ku(e,h)=A e\wedge g,\;
     \Ku(f,h)=B f\wedge g,\;
     \Ku  (e,f)=\Ku(e,g)=\Ku(f,g)=\Ku(g,h)=0,
    \end{equation*}with  $A+B=-1$.
    \end{theo}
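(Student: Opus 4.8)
The plan is to fix a basis adapted to $\Ri$ and use \eqref{semi1}, the algebraic Bianchi identity and \eqref{k3} to pin $\Ku$ down almost completely, the remaining freedom being removed by a rotation in a spacelike plane. Being semi-symmetric, $\Ku$ satisfies \eqref{semi1}, so by Proposition \ref{pr1} the Ricci operator $\Ri$ is of type $\{\mathrm{diag}\}$ or $\{4,02\}$; a diagonalizable operator with $\Ri^2=0$ vanishes, hence (as $\Ri\neq0$) $\Ri$ is of type $\{4,02\}$, and its diagonal part must be zero since $\Ri^2=0$. This produces a basis $(e,f,g,h)$ with $\langle e,e\rangle=\langle f,f\rangle=\langle g,h\rangle=1$ and all other products of basis vectors zero, for which $\Ri(e)=\Ri(f)=\Ri(g)=0$ and $\Ri(h)=g$; equivalently $\Ri=\langle g,\cdot\rangle\,g$.

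Evaluating \eqref{semi1} on $h$ gives $\Ku(u,v)g=\langle g,\Ku(u,v)h\rangle\,g$, and pairing with $h$ while using that $\Ku(u,v)$ is skew forces $\langle g,\Ku(u,v)h\rangle=-\langle g,\Ku(u,v)h\rangle$, so $\Ku(u,v)g=0$ for all $u,v$. Since the skew-symmetric endomorphisms annihilating $g$ form exactly the $3$-dimensional space $\mathrm{span}\{e\we f,e\we g,f\we g\}$, I would write $\Ku(u,v)=\alpha(u,v)\,e\we f+\beta(u,v)\,e\we g+\gamma(u,v)\,f\we g$ with $\alpha,\beta,\gamma$ skew bilinear forms. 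Reading off $\Ku(u,v)$ on $e,f,h$ and using \eqref{k3} identifies $\alpha,\beta,\gamma$ with (minus) the forms attached to $\Ku(e,f),\Ku(e,h),\Ku(f,h)$; pairing with $g$ then gives $\Ku(e,g)=\Ku(f,g)=0$, and the Bianchi identity on $(e,h,g)$ and $(f,h,g)$ gives $\Ku(g,h)=0$. What survives is captured by a symmetric $3\times3$ array: there are scalars $a,p,q,x,y,z$ with $\Ku(e,f)=a\,e\we f+p\,e\we g+q\,f\we g$, $\Ku(e,h)=p\,e\we f+x\,e\we g+y\,f\we g$ and $\Ku(f,h)=q\,e\we f+y\,e\we g+z\,f\we g$.

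Now imposing $\mathrm{Ric}=\Ri$: a short trace computation gives $r(e,e)=-a$ (hence $a=0$), then $r(e,h)=q$ and $r(f,h)=-p$ (hence $p=q=0$), and $r(h,h)=-(x+z)$ (hence $x+z=-1$), while all remaining components of $r$ vanish identically. Thus $\Ku(e,f)=\Ku(e,g)=\Ku(f,g)=\Ku(g,h)=0$, $\Ku(e,h)=x\,e\we g+y\,f\we g$ and $\Ku(f,h)=y\,e\we g+z\,f\we g$ with $x+z=-1$; such a tensor is moreover automatically semi-symmetric, since $e\we g$ and $f\we g$ commute by \eqref{der}, so \eqref{semi} gives nothing more. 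Finally, replacing $(e,f)$ by $(\cos\theta\,e+\sin\theta\,f,\,-\sin\theta\,e+\cos\theta\,f)$ keeps the basis $\Ri$-adapted and multiplies $\Ku$ by the corresponding rotation; the $f\we g$-coefficient of $\Ku(e,h)$ becomes $\tfrac12(z-x)\sin2\theta+y\cos2\theta$, which is also the $e\we g$-coefficient of $\Ku(f,h)$, so a suitable $\theta$ (take $\theta=\pi/4$ when $x=z$) annihilates both. In this final basis $\Ku(e,h)=A\,e\we g$, $\Ku(f,h)=B\,f\we g$, all other products of basis vectors vanish, and $A+B=x+z=-1$, which is the asserted normal form.

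The bulk of the work, and the step needing care, is the bookkeeping of the middle paragraph: one must organize the consequences of \eqref{k3}, the Bianchi identity and the Ricci condition so that they collapse to the symmetric $3\times3$ picture, after which everything is immediate. One should also keep in mind that the normal form is only attained after the final rotation, not in an arbitrary $\Ri$-adapted basis.
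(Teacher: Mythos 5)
Your proposal is correct and follows essentially the same route as the paper: the adapted basis comes from Proposition \ref{pr1}, the components of $\Ku$ are eliminated using \eqref{semi1}, \eqref{k3} and the Ricci conditions, and the residual off-diagonal term is removed by the same rotation in the $(e,f)$-plane solving a $\sin 2\theta,\cos 2\theta$ equation. The only difference is organizational: you first deduce $\Ku(u,v)g=0$ from \eqref{semi1} and skew-symmetry, which replaces the paper's explicit $6\times 6$ matrix computation by a shorter structural elimination.
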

    
    \begin{proof}
    According to Proposition \ref{pr1}, there exists a basis $(e,f,g,h)$ with $\langle e,e\rangle=\langle f,f\rangle=\langle g,h\rangle=1$
     such that $\Ri(e)=\Ri(f)=\Ri(g)=0$ and $\Ri(h)=g$. 
    The matrix of $\wi\Ku$ in the basis $(e\wedge f,e\wedge g,e\wedge h,f\wedge g,f\wedge h,g\wedge h)$ is given by
    $$\mathrm{M}=\left(\begin{array}{cccccc}k_{11}&k_{12}&k_{13}&k_{14}&k_{15}&k_{16}\\ 
    k_{13}&k_{22}&k_{23}&k_{24}&k_{25}&k_{26}\\ 
    k_{12}&k_{32}&k_{22}&k_{34}&k_{35}&k_{36}\\
    k_{15}&k_{35}&k_{25}&k_{44}&k_{45}&k_{46}\\
    k_{14}&k_{34}&k_{24}&k_{54}&k_{44}&k_{56}\\
    -k_{16}&-k_{36}&-k_{26}&-k_{56}&-k_{46}&k_{66}\end{array}\right).$$  
   The relations $\Ri(e)=\Ri(f)=\Ri(g)=0$ and $\Ri(h)=g$ are equivalent to
    $$\left\{\begin{array}{l}k_{11}+k_{22}+k_{33}=0,\\
    k_{11}+k_{44}+k_{55}=0,\\
    k_{32}=-k_{54},~~~
    k_{42}=-k_{53},\\
    k_{51}=-k_{62},\;k_{41}=k_{63},\;k_{31}=k_{64},\\
    k_{21}=-k_{65},\;k_{22}=-k_{44},\\
    k_{23}+k_{45}=1.\end{array}   \right..$$   
    So $M$ has the form
    $$\mathrm{M}=\left(\begin{array}{cccccc}0&u&v&w&x&y\\ 
    v&0&b&p&q&-x\\ 
    u&z&0&r&-p&w\\
    x&-p&q&0&1-b&v\\
    w&r&p&-z&0&-u\\
    -y&-w&x&u&-v&a\end{array}\right).$$  
    Thus, in the basis $(e,f,g,h)$, we have
    \[ [\Ku(e,f)]=\left(\begin{array}{cccc}0&0&-u&-v\\
    0&0&-w&-x\\
    v&x&y&0\\u&w&0&-y\end{array}     \right)\esp
    [\Ku(e,g)]=\left(\begin{array}{cccc}0&-u&-z&0\\
    u&0&-r&p\\
    0&-p&w&0\\z&r&0&-w\end{array}     \right). \]
    \[ [\Ku(e,h)]=\left(\begin{array}{cccc}0&-v&0&-b\\
    v&0&-p&-q\\
    b&q&-x&0\\0&p&0&x\end{array}     \right)\esp
    [\Ku(f,g)]=\left(\begin{array}{cccc}0&-w&-r&-p\\
    w&0&z&0\\
    p&0&-u&0\\r&-z&0&u\end{array}     \right). \]
    \[ [\Ku(f,h)]=\left(\begin{array}{cccc}0&-x&p&-q\\
    x&0&0&b-1\\
    q&1-b&v&0\\-p&0&0&-v\end{array}     \right)
    \esp [\Ku(g,h)]=\left(\begin{array}{cccc}0&-y&-w&x\\
    y&0&u&-v\\
    -x&v&-a&0\\w&-u&0&a\end{array}     \right). \]
    By using the relation $\Ku(u,v)\circ \Ri=\Ri\circ \Ku(u,v)$ for the matrices above,  we get
      $y=w=x=u=v=a=z=r=p=0.$
     Therefore, $\Ku(e,h)=-b e\wedge g-qf\wedge g$, $\Ku(f,h)=-q e\wedge g-(1-b)f\wedge g$
     and  $\Ku(e,f)=\Ku(e,g)=\Ku(f,g)=\Ku(g,h)=0$.
      We claim that there exists $\theta$ such that
    \[ \Ku(\cos\theta e+\sin\theta f,h)(-\sin\theta e+\cos\theta f)=
    \Ku(-\sin\theta e+\cos\theta f,h)(\cos\theta e+\sin\theta f)=0. \]
    Indeed, this equivalent to $q\cos(2\theta)+\frac12(1-2\la)\sin(2\theta)=0$ and this equation has obviously a solution.  In the basis $(e',f',g,h)$ with
    $e'=\cos\theta e+\sin\theta f$ and $f'=-\sin\theta e+\cos\theta f$, we have
    $\Ku(e',h)=A e'\wedge g$ and $\Ku(f',h)=B f'\wedge g$. To have $\Ri(h)=g$ we need  $A+B=-1$.
    \end{proof}
    
    \section{Semi-symmetric Lorentzian Lie algebras: general properties} \label{section4}
    
     
     A Lie group $G$ together with a left-invariant pseudo-Riemannian metric $g$ is called a 
     \emph{pseudo-Riemannian Lie group}. The  metric $g$ 
     defines a  pseudo-Euclidean product $\prs$ on the Lie algebra $\G=T_eG$ of $G$, and conversely, any  pseudo-Euclidean product on $\G$
     gives rise
     to an unique  left-invariant pseudo-Riemannian metric on $G$.\\ We will refer to a Lie
     algebra endowed with a  pseudo-Euclidean product as a \emph{pseudo-Euclidean Lie algebra}.  The
     Levi-Civita connection of $(G,g)$ defines a product $\mathrm{L}:\G\times\G\too\G$ called the Levi-Civita product and given by  Koszul's
     formula
     \begin{eqnarray}\label{levicivita}2\langle
     \mathrm{L}_uv,w\rangle&=&\langle[u,v],w\rangle+\langle[w,u],v\rangle+
     \langle[w,v],u\rangle.\end{eqnarray}
     For any $u,v\in\G$, $\mathrm{L}_{u}:\G\too\G$ is skew-symmetric and $[u,v]=\mathrm{L}_{u}v-\mathrm{L}_{v}u$. We will also write $u.v=\mathrm{L}_{v}u$.
     The curvature on $\G$ is given by
     $
      \label{curvature}\Ku(u,v)=\mathrm{L}_{[u,v]}-[\mathrm{L}_{u},\mathrm{L}_{v}].
     $
     It is well-known that $\Ku$ is a curvature tensor on $(\G,\prs)$ and, moreover, it satisfies the differential Bianchi identity
     \begin{equation}\label{2bianchi}
     \mathrm{L}_u(\Ku)(v,w)+\mathrm{L}_v(\Ku)(w,u)+\mathrm{L}_w(\Ku)(u,v)=0,\quad u,v,w\in\G
     \end{equation}where $\mathrm{L}_u(\Ku)(v,w)=[\mathrm{L}_u,\Ku(v,w)]-\Ku(\mathrm{L}_uv,w)-\Ku(v,\mathrm{L}_uw).$
      Denote by $\mathfrak{h}(\G)$ the holonomy Lie algebra of $(G,g)$. It is the smallest Lie algebra containing  $\mathfrak{h}(\Ku)=\mathrm{span}\{\Ku(u,v):u,v\in\G \}$ and satisfying $[\mathrm{L}_u,\mathfrak{h}(\G)]\subset\mathfrak{h}(\G)$, for any $u\in\G$.\\
      If we denote by $\mathrm{R}_{u}:\G\too\G$ the right multiplication given by $\mathrm{R}_{u}v=\mathrm{L}_{v}u$, it is easy to check the following useful relation
           \begin{equation}\label{cu2}\Ku(u,.)v=-\mathrm{R}_v\circ\mathrm{R}_u+\mathrm{R}_{u.v}+[\mathrm{R}_v,\mathrm{L}_u].
           \end{equation} We can also see easily that
           \begin{equation}
            \label{eq7}[\G,\G]^\perp=\{u\in\G,\mathrm{R}_u=\mathrm{R}_u^*\}\esp
            (\G.\G)^\perp=\{u\in\G,\mathrm{R}_u=0\}. 
            \end{equation}
           $(G,g)$ is semi-symmetric iff $\Ku$ is a semi-symmetric curvature tensor of $(\G,\prs)$.  Without reference to any Lie group, we call a pseudo-Euclidean Lie algebra $(\G,\prs)$ semi-symmetric  if its curvature is semi-symmetric.     
    
     Let $(\G,\prs)$ be a semi-symmetric Lorentzian Lie algebra. According to Proposition \ref{pr2}, $\G$ splits orthogonally as
     \begin{equation}\label{split}\G=\G_0\oplus \G_1\oplus\ldots\oplus \G_r,
     \end{equation}where $\G_0=\ker(\mathrm{Ric}^2)$ and $\G_1,\ldots,\G_r$ are the eigenspaces associated to the  non zero eigenvalues of $\mathrm{Ric}$. Moreover, $\Ku(\G_i,\G_j)=0$ for any $i\not=j$ and $\dim\G_i\geq2$ if $i\not=0$. The following proposition gives  more properties of the $\G_i$'s involving the Levi-Civita product.
     
     \begin{pr}\label{pr7} Let $(\G,\prs)$ be a semi-symmetric Lorentzian Lie algebra. Then, for any $i,j\in\{1,\ldots,r\}$ and $i\not=j$,
     \begin{equation*}\label{l}
             {\G_j}.\G_i\subset \G_i, \;  {\G_i}.\G_i\subset \G_0+\G_i, \;
             {\G_0}.\G_i\subset \G_i, \;
              {\G_0}.\G_0\subset \G_0, \;  {\G_i}.\G_0\subset \G_0+\G_i.
         \end{equation*}Moreover, if $\dim\G_0=1$ then for any $u\in\G_0$, $u.u=0$ and, for any $k\in\N^*$, $[\mathrm{R}_u^k,\mathrm{L}_u]=k\mathrm{R}_u^{k+1}$. In particular, $\mathrm{R}_u$ is a nilpotent endomorphism.
     
     \end{pr}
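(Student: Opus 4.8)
The plan is to read off both parts of the statement from Proposition \ref{pr2} — which supplies the orthogonal decomposition \eqref{split}, the invariance of each $\G_i$ under all $\Ku(u,v)$, and the vanishing $\Ku(\G_i,\G_j)=0$ for $i\neq j$ (including $i$ or $j$ equal to $0$) — together with the differential Bianchi identity \eqref{2bianchi} and the identity \eqref{cu2}; I would treat the five inclusions first and then deduce the statements about $\dim\G_0=1$. For the inclusions, \eqref{2bianchi} is essentially the only relation linking the Levi-Civita product to the curvature beyond the defining ones, so it is the engine: I would substitute into \eqref{2bianchi} triples $(x,y,z)$ with each of $x,y,z$ lying in one of the blocks $\G_0,\G_1,\dots,\G_r$. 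Whenever two of the three vectors lie in distinct blocks the corresponding curvature term vanishes, so \eqref{2bianchi} collapses to a combination of terms $\Ku(\mathrm{L}_ab,c)$; and since $\Ku(\cdot,c)$ with $c\in\G_m$ only sees the $\G_m$-component of its first argument, each such term involves a single projection $P_m(\mathrm{L}_ab)$, where $P_m$ is the orthogonal projection onto $\G_m$. Evaluating the resulting $\mathrm{so}(\G)$-identity on suitable vectors and using the pair symmetry \eqref{k3}, one should obtain, for each ``forbidden'' pair of indices, a relation forcing the offending component $P_m(\mathrm{L}_ab)\in\G_i$ to vanish: here one uses that the restriction $\Ku|_{\G_i}$ (for $i\neq0$) is itself an Einstein semi-symmetric curvature tensor on $\G_i$, with nonzero Einstein constant $\alpha_i$ and $\dim\G_i\geq2$, whose structure is completely determined by Proposition \ref{pr3} and Theorem \ref{main1}. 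Several of the five inclusions are moreover equivalent to one another via the skew-symmetry $\langle\mathrm{L}_vx,y\rangle=-\langle x,\mathrm{L}_vy\rangle$, which cuts down the work.

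The main obstacle I anticipate is the combination of heavy bookkeeping with the analysis of the $\G_0$-blocks. The case distinction — which block each of $x,y,z$ comes from (when two come from the same block, a commutator term $[\mathrm{L}_z,\Ku(x,y)]$ survives in \eqref{2bianchi} and must be handled by a bootstrap), and which of the five inclusions is being targeted — is long, and the step ``solve the reduced Bianchi identity for $P_m(\mathrm{L}_ab)$'' is clean only when $\dim\G_i=3$, where $\Ku|_{\G_i}=-\tfrac12\alpha_i\,(\cdot)\wedge(\cdot)$ and hence $w\mapsto\Ku(w,\cdot)$ is injective; when $\dim\G_i=4$ one must invoke the holonomy dichotomy of Theorem \ref{main1} and treat the two Petrov types separately, and when $\dim\G_i=2$ one instead leans on skew-symmetry of the $\mathrm{L}_a$ and on \eqref{semi1}. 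The components landing in $\G_0$ need a genuinely different argument, exploiting \eqref{semi1} ($\mathrm{Ric}$ commutes with every $\Ku(u,v)$) and the fact that $\mathrm{Ric}|_{\G_0}$ is nilpotent of square zero (Proposition \ref{pr1}); I expect this $\G_0$-analysis to be the delicate point.

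Granting the inclusions, the case $\dim\G_0=1$ is then immediate. Since \eqref{split} is orthogonal and $\G$ is nondegenerate, each summand, hence $\G_0=\R u$, is nondegenerate, so $\langle u,u\rangle\neq0$; skew-symmetry of $\mathrm{L}_u$ gives $\langle u.u,u\rangle=0$, while $\G_0.\G_0\subset\G_0$ gives $u.u\in\R u$, so $u.u=0$. Because $\dim\G_0=1$ and $\Ku(\G_0,\G_i)=0$, we get $\Ku(u,w)=0$ for every $w$, i.e. $\Ku(u,\cdot)u=0$; putting $v=u$ in \eqref{cu2} and using $u.u=0$ yields $[\mathrm{R}_u,\mathrm{L}_u]=\mathrm{R}_u^2$, and an induction with $[AB,C]=A[B,C]+[A,C]B$ gives $[\mathrm{R}_u^k,\mathrm{L}_u]=k\,\mathrm{R}_u^{k+1}$ for all $k\in\N^*$. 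Finally, taking traces, $\tr(\mathrm{R}_u^m)=0$ for every $m\geq2$; a real endomorphism all of whose power traces from the second one on vanish is nilpotent (otherwise $\sum_{m\geq1}\tr(\mathrm{R}_u^m)t^{m-1}$ would be a nonconstant rational function equal to the constant $\tr(\mathrm{R}_u)$), so $\mathrm{R}_u$ is nilpotent.
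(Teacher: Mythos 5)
Your treatment of the case $\dim\G_0=1$ is correct and coincides with the paper's: $u.u\in\G_0\cap u^{\perp}=\{0\}$, then \eqref{cu2} at $v=u$ together with $\Ku(u,\cdot)=0$ gives $[\mathrm{R}_u,\mathrm{L}_u]=\mathrm{R}_u^2$, induction gives $[\mathrm{R}_u^k,\mathrm{L}_u]=k\mathrm{R}_u^{k+1}$, and the vanishing of $\tr(\mathrm{R}_u^m)$ for $m\geq2$ forces nilpotency. The problem is the first half, which is the substance of the proposition: what you offer there is a programme, not a proof, and its decisive step is exactly the one left open. You propose to feed block-wise triples into \eqref{2bianchi}, collapse it to terms of the form $\Ku(\mathrm{L}_ab,c)$, and then ``force'' the offending projections $P_m(\mathrm{L}_ab)$ to vanish by exploiting the structure of $\Ku$ restricted to each block. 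That forcing step is never carried out (you yourself flag the $\G_0$-components and the bookkeeping as unresolved), and the tools you invoke do not cover the situation: the proposition is stated for a semi-symmetric Lorentzian Lie algebra of arbitrary dimension, all but one of the blocks $\G_i$ are Euclidean, so Theorem \ref{main1} (four-dimensional Lorentzian) and Proposition \ref{pr3} (dimension three) are unavailable for general blocks; and whenever the primitive holonomy of $\Ku|_{\G_i}$ is small, the map $w\mapsto\Ku(w,c)$ has a large kernel, so the reduced Bianchi identity cannot simply be ``solved'' for $P_m(\mathrm{L}_ab)$.

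The paper's proof avoids all of this with one trick you are missing: it never tries to read $\mathrm{L}_xu$ off the curvature directly. Fixing $i\in\{1,\dots,r\}$ and $x\in\G_i^\perp$, two uses of the differential Bianchi identity \eqref{2bianchi}, combined with $\Ku(\G_i,\G_i^\perp)=0$, the algebraic Bianchi identity and the invariance $\Ku(\G,\G)\G_i\subset\G_i$, first yield $\mathrm{L}_x\bigl(\Ku(u,v)w\bigr)\in\G_i$ for all $u,v,w\in\G_i$. Then one takes a trace: expressing $\Ri(u)$ as a curvature trace over an orthonormal basis adapted to \eqref{split} and using skew-symmetry of $\mathrm{L}_x$, one gets $\langle\mathrm{L}_x\Ri(u),z\rangle=0$ for every $z\in\G_i^\perp$, i.e. $\la_i\mathrm{L}_xu=\mathrm{L}_x\Ri(u)\in\G_i$, and $\la_i\neq0$ gives $\mathrm{L}_x\G_i\subset\G_i$; the remaining inclusions then follow from skew-symmetry of $\mathrm{L}_x$, as you correctly anticipated. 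The leverage comes from the nonvanishing Ricci eigenvalue, not from any injectivity or Petrov-type classification of $\Ku|_{\G_i}$, which is why no case distinction on $\dim\G_i$ or on the signature of the blocks is needed. Without this (or an equivalent) device, your plan does not yield the five inclusions.
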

     
     \begin{proof} We start by proving that, for any $i\in\{1,\ldots,r  \}$ and any $x\in\G_i^\perp$, $\mathrm{L}_x\G_i\subset\G_i$. Fix $i\in\{1,\ldots,r  \}$ and $x\in\G_i^\perp$. For any $u,v,w\in\G_i$, by using the differential Bianchi identity, we get
         \begin{eqnarray*}
         \mathrm{L}_x(\Ku)(u,v,w)&=&-\mathrm{L}_u(\Ku)(v,x,w)-\mathrm{L}_v(\Ku)(x,u,w)\\
         &=&-\mathrm{L}_u(\Ku(v,x)w)+\Ku(\mathrm{L}_uv,x)w+\Ku(v,\mathrm{L}_ux)w+\Ku(v,x)\mathrm{L}_uw
         \\&&-\mathrm{L}_v(\Ku(u,x)w)+\Ku(\mathrm{L}_vu,x)w+\Ku(u,\mathrm{L}_vx)w+\Ku(u,x)\mathrm{L}_vw\\
         &=&\Ku(\mathrm{L}_uv,x)w+\Ku(v,\mathrm{L}_ux)w+\Ku(\mathrm{L}_vu,x)w+\Ku(u,\mathrm{L}_vx)w,
         \end{eqnarray*}since, by virtue of Proposition \ref{pr1}, $\Ku(u,x)=\Ku(v,x)=0$.
         This shows, also according to Proposition \ref{pr1}, that $\mathrm{L}_x(\Ku)(u,v,w)\in\G_i$. Now
         \begin{eqnarray*}
         \mathrm{L}_x(\Ku)(u,v,w)&=&\mathrm{L}_x(\Ku(u,v)w)-\Ku(\mathrm{L}_xu,v)w-\Ku(u,\mathrm{L}_xv)w-\Ku(u,v)\mathrm{L}_xw\\
         &=&\mathrm{L}_x(\Ku(u,v)w)-\Ku(\mathrm{L}_xu,v)w-\Ku(u,\mathrm{L}_xv)w+\Ku(v,\mathrm{L}_xw)u
         +\Ku(\mathrm{L}_xw,u)v.
         \end{eqnarray*}Since $\mathrm{L}_x(\Ku)(u,v,w)\in\G_i$ and $\Ku(\G,\G)\G_i\subset\G_i$, we get $\mathrm{L}_x(\Ku(u,v)w)\in\G_i$. Having this property in mind, we will prove now that $\mathrm{L}_x\Ri(u)\in\G_i$.  Choose an orthonormal basis $(e_1,\ldots,e_n)$  which is adapted to the splitting \eqref{split} and put $\e_i=\langle e_i,e_i\rangle$. For any $z\in\G_i^\perp$, we have
         \begin{eqnarray*}
         \langle \mathrm{L}_x\Ri(u),z\rangle&=&-\langle \Ri(u),\mathrm{L}_xz\rangle
         =\sum_{k=1}^n\e_i\langle\Ku(u,e_k)e_k,\mathrm{L}_xz\rangle
         =-\sum_{k=1}^n\e_i\langle\mathrm{L}_x(\langle\Ku(u,e_k)e_k),z\rangle
         =0.
         \end{eqnarray*}We have used the fact that if $e_k\in\G_i$ then  $\mathrm{L}_x(\langle\Ku(u,e_k)e_k)\in\G_i$ and if $e_k\in\G_i^\perp$ then $\Ku(u,e_k)=0$. Thus $\mathrm{L}_x\Ri(u)=\la_i\mathrm{L}_xu\in\G_i$ where $\la_i$ is the eigenvalues of $\Ri$ associated to the eigenspace $\G_i$. We conclude that $\mathrm{L}_x\G_i\subset\G_i$ which shows that, for any $i,j\in\{1,\ldots,r \}$ with $i\not=j$, $\mathrm{L}_{\G_j}\G_i\subset\G_i$ and $\mathrm{L}_{\G_0}\G_i\subset\G_i$. Since $\mathrm{L}$ takes its values in $\mathrm{so}(\G)$, the other inclusions follow immediately. 
         
         Suppose that $\dim\G_0=1$ an choose a non null vector $u\in\G_0$. Since $\dim\G_0=1$, $\G_0$ is nondegenerate and $\G_0.\G_0\subset\G_0$ we get $u.u=0$. Moreover, $\Ku(u,.)=0$ and hence from \eqref{cu2} $[\mathrm{R}_u,\mathrm{L}_u]=\mathrm{R}_u^{2}$. By induction, we deduce that, for any $k\in\N^*$, $[\mathrm{R}_u^k,\mathrm{L}_u]=k\mathrm{R}_u^{k+1}$. This implies that $\tr(\mathrm{R}_u^k)=0$ for any $k\geq2$ and hence $\mathrm{R}_u$ is a nilpotent endomorphism.
         \end{proof}
         
         We end this section by a lemma which we will use later.
         \begin{Le}\label{Le} Let $V$ be a pseudo-Euclidean vector space of dimension $\leq3$ and $A,B$ are, respectively, an  endomorphism and a skew-symmetric endomorphism such that $[A,B]=A^2$. Then $A=0$ or $B=0$.
              
              \end{Le}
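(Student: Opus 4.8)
I want to prove: if $V$ is a pseudo-Euclidean vector space with $\dim V \le 3$, $A\in\mathrm{End}(V)$, $B\in\mathrm{so}(V)$ and $[A,B]=A^2$, then $A=0$ or $B=0$. The first observation is that taking traces kills a lot: $\tr(A^2)=\tr([A,B])=0$, and more generally, multiplying $[A,B]=A^2$ on the left by $A^k$ and using $\tr(A^k[A,B]) = \tr(A^{k+1}B)-\tr(A^kBA)=\tr(A^{k+1}B)-\tr(A^{k+1}B)=0$ gives $\tr(A^{k+2})=0$ for all $k\ge 0$, hence $\tr(A^m)=0$ for all $m\ge 2$. So the only possibly-nonzero power-sum of the eigenvalues of $A$ (counted with multiplicity in $\overline{V}=V\otimes\C$) is $p_1=\tr(A)$. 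In dimension $\le 3$, Newton's identities then force the characteristic polynomial of $A$ to be $x^3 - (\tr A)x^2$ (or lower-degree analogues), so $A$ has $0$ as an eigenvalue of multiplicity $\ge \dim V - 1$; in particular $A$ is nilpotent as soon as $\tr A = 0$, and in general $A^2$ is already quite degenerate.

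**Reducing to the nilpotent case.** Next I would argue $\tr A = 0$. The relation $[A,B]=A^2$ is an equation in the associative algebra; restrict attention to the generalized eigenspaces of $A$. Since $B$ is skew-symmetric for a nondegenerate (possibly indefinite) form, $B$ is semisimple-ish only over $\C$ in the Euclidean case but not in general Lorentzian case — so I cannot simply diagonalize $B$. Instead, note that $B$ preserves each generalized eigenspace $V_\lambda$ of $A$? That is false in general since $[A,B]\ne 0$. However: from $[A,B]=A^2$ one gets $[A^2,B]=A[A,B]+[A,B]A=A^3+A^3=2A^3$, and inductively $[A^k,B]=kA^{k+1}$. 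So on $\ker A^N$ for $N$ large ($A^N=0$ once $A$ is nilpotent), everything is consistent, but I still need nilpotency. The cleanest route: the eigenvalues of $A$ on $\overline V$ are permuted by... hmm, actually the slick argument is that $\mathrm{ad}_A$ applied to $B$ yields $A^2$, so $A^2 \in \mathrm{Im}(\mathrm{ad}_A)$, whose intersection with the center-type pieces is controlled. Rather than belabor this, I would just split into cases according to the Jordan form of $A$ over $\C$ in dimension $\le 3$: given $\tr(A^m)=0$ for $m\ge 2$, the eigenvalue multiset is $\{0,0,0\}$, or $\{\lambda,0,0\}$ with $\lambda=\tr A$ (only when $\dim V = 3$ with a $1$-dim and effectively... wait $p_2=\lambda^2$ would have to vanish), so in fact $\lambda^2 = p_2 = 0$, giving $\lambda = 0$. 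Hence in every case $A$ is nilpotent.

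**Finishing with $A$ nilpotent, $\dim V\le 3$.** Now $A$ is nilpotent of index $\le 3$. If $A=0$ we are done, so assume $A\ne 0$. From $[A^k,B]=kA^{k+1}$ with $k$ the nilpotency index we get nothing new, but we do get: $A^2 = [A,B]$ has image inside $\mathrm{Im}\,A$ and kernel containing $\ker A$, consistent. The remaining work is a short direct computation in a basis adapted to the Jordan form of $A$ together with skew-symmetry of $B$ with respect to $\prs$. Here I would split on $\dim V$ and on whether the form restricted to relevant subspaces is definite or not; in each case write $A$ in its normal form (a single nilpotent Jordan block on a $2$- or $3$-dimensional piece, or a block plus zeros), impose $B^* = -B$, and solve $[A,B]=A^2$ entry-by-entry to reach a contradiction with $A\ne0$. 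I expect the main obstacle to be precisely this last bookkeeping: in the Lorentzian $3$-dimensional case the nilpotent $A$ may have a single Jordan block of size $3$ on a totally-null-flag-respecting basis, and one must check skew-symmetry forces $B$ to fail $[A,B]=A^2$. This is a finite, low-dimensional case check, the kind the authors elsewhere acknowledge doing by computer; I would present it compactly by exploiting $\tr(A^2)=0$ one more time to rule out the size-$3$ block (it would give $A^2\ne0$ with $\tr(A^2)=0$ automatically, so that alone isn't enough — one genuinely needs the skew-symmetry constraint), and handle the size-$2$ block by a one-line matrix computation showing $[A,B]$ is always off the span of $A^2$.
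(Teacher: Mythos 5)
Your opening reduction is correct and coincides with the paper's: from $[A,B]=A^2$ one gets $[A^k,B]=kA^{k+1}$, hence $\tr(A^m)=0$ for all $m\ge 2$, hence $A$ is nilpotent (the paper asserts this implication; your power-sum/Newton argument is a valid justification). The genuine gap is that everything after this point is a plan, not a proof: in dimension $\le 3$ the trace reduction is routine, and the whole content of the lemma lies in the finite case check that you defer ("impose $B^*=-B$ and solve $[A,B]=A^2$ entry-by-entry to reach a contradiction"). Moreover, that check cannot go the way you announce. In the three-dimensional Lorentzian case $\mathrm{so}(1,2)$ contains nonzero nilpotent elements: take a basis $(d,e,\bar d)$ with $\langle d,\bar d\rangle=\langle e,e\rangle=1$ and all other products zero, and put $B=d\wedge e$, so that $B(d)=0$, $B(e)=d$, $B(\bar d)=-e$, $B^2\neq 0$, $B^3=0$. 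Then $A:=B^2\neq 0$ satisfies $A^2=B^4=0$ and $[A,B]=0=A^2$ while $B\neq 0$. This lands exactly in your "size-$2$ block" case (where the equation degenerates to $[A,B]=0$), and there the "one-line matrix computation showing $[A,B]$ is always off the span of $A^2$" does not exist: no contradiction with $A\neq0$ is available, so your finishing strategy, carried out honestly, cannot close the proof in that case. Any complete treatment must explicitly confront the possibility that the skew-symmetric $B$ is itself nilpotent (non-semisimple), which your sketch never does.

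For comparison, the paper finishes by a different (but related) route: nilpotency of $A$ gives $A^2=0$ in dimension $2$ and $A^3=0$ in dimension $3$, so the hypothesis becomes a commutation relation, $[A,B]=0$ resp. $[A^2,B]=0$, and the problem is reduced to showing that a nonzero nilpotent endomorphism commuting with a skew-symmetric one forces the latter to vanish; this is then checked by writing $B$ in diagonal form over $\C$, i.e. by tacitly assuming $B$ semisimple — precisely the assumption that fails for the nilpotent $B\in\mathrm{so}(1,2)$ above. So the weak point of your proposal is the same one left untreated in the paper's argument, but in your write-up it is compounded by the fact that the decisive computation is only promised, not performed, and the way you propose to perform it would not deliver the stated conclusion in the Lorentzian, non-semisimple-$B$ case.
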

              
              \begin{proof} The relation $[A,B]=A^2$ implies that, for any $k\in\N^*$, $[A^k,B]=kA^{k+1}$ and  $\tr(A^k)=0$ for $k\geq2$ which implies that $A$ is nilpotent. If $\dim V=2$ we have $[A,B]=0$ and if $\dim V=3$ we have $[A^2,B]=0$. To conclude it suffices to show that in a pseudo-Euclidean vector space of dimension $\leq3$ if $N$ and $B$ are, respectively,  nilpotent and   skew-symmetric satisfying $[N,B]=0$ then $B=0$ or $N=0$. Suppose $N\not=0$ and denote by $N^c$ and $B^c$ the associated complex endomorphisms of $V\otimes\C$.\\
               If $\dim V=2$ and since $[N,B]=0$ then there exists a basis of $V\otimes\C$ such that $$[N^{c}]=\left(\begin{array}{cc}0&1\\0&0\end{array} \right),\; [B^{c}]=
              \left(\begin{array}{cc}\al&0\\0&\be\end{array} \right),\;  \{\al,\be\}=\{\imath a,-\imath a \}\;\mbox{or}\; \{\al,\be\}=\{ a,- a \}.$$ The condition $[N,B]=0$ implies $a=0$ and hence $B=0$.\\
              If $\dim V=3$ and since $[N,B]=0$ then there exists a basis of $V\otimes\C$ such that $$[N^{c}]=\left(\begin{array}{ccc}0&1&0\\0&0&1\\0&0&0\end{array} \right)\;\mbox{or}\;\left(\begin{array}{ccc}0&0&0\\0&0&1\\0&0&0\end{array} \right)\;, [B^{c}]=
                   \left(\begin{array}{ccc}\al&0&0\\0&\be&0\\0&0&0\end{array} \right),\;  \{\al,\be\}=\{\imath a,-\imath a \}\;\mbox{or}\; \{\al,\be\}=\{ a,- a \}.$$
              The condition $[N,B]=0$ implies $a=0$ and hence $B=0$.
              \end{proof}

      \section{Semi-symmetric Lorentzian Lie algebras of dimension $\leq$ 4} \label{section5}

      Any 2-dimensional pseudo-Euclidean Lie algebra is semi-symmetric and we have the following known result which we will use later.
      
       \begin{pr}\label{pr8} Let $(\G,\prs)$ be a two dimensional pseudo-Euclidean Lie algebra. Then:
             \begin{enumerate}\item If $\G$ is abelian then the Levi-Civita product is trivial.
             \item If $\G$ is non abelian and $\prs$ is Euclidean then if $(e,f)$ is an orthonormal basis with $e\in[\G,\G]$ and $f\in[\G,\G]^\perp$ then
             $\mathrm{L}_f=0$, $\mathrm{L}_{e}=\la e\wedge f$ and $\Ku(e,f)=\la^2e\wedge f$.
            \item If $\G$ is non abelian,  $\prs$ is Lorentzian and $[\G,\G]$ is nondegenerate positive then if $(e,f)$ is an orthonormal basis with $e\in[\G,\G]$ and $f\in[\G,\G]^\perp$ then
                   $\mathrm{L}_f=0$, $\mathrm{L}_{e}=\la e\wedge f$ and $\Ku(e,f)=-\la^2e\wedge f$.
                   \item If $\G$ is non abelian,  $\prs$ is Lorentzian and $[\G,\G]$ is nondegenerate negative then if $(e,f)$ is an orthonormal basis with $e\in[\G,\G]$ and $f\in[\G,\G]^\perp$ then
                               $\mathrm{L}_f=0$, $\mathrm{L}_{e}=\la e\wedge f$ and $\Ku(e,f)=\la^2e\wedge f$.
               \item If $\G$ is non abelian,  $\prs$ is Lorentzian and $[\G,\G]$ is totally isotropic then if $(e,f)$ is an  basis with $e\in[\G,\G]$ and $f\in\G$ such that $\langle e,e\rangle=\langle f,f\rangle=0$, $\langle e,f\rangle=1$ then $\mathrm{L}_f=\la e\wedge f$, $\mathrm{L}_{e}=0$ and $\Ku(e,f)=0$.

             \end{enumerate}

             \end{pr}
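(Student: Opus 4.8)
The plan is to compute the Levi-Civita product directly from Koszul's formula \eqref{levicivita} in a basis adapted to the causal character of $[\G,\G]$, and then to read off the curvature from $\Ku(u,v)=\mathrm{L}_{[u,v]}-[\mathrm{L}_u,\mathrm{L}_v]$. If $\G$ is abelian every bracket vanishes, so the right-hand side of \eqref{levicivita} is identically zero and $\mathrm{L}\equiv 0$, which is item 1. For the remaining items note that in dimension two the derived ideal is spanned by a single bracket, so $\dim[\G,\G]\le1$, and non-abelianness forces $[\G,\G]=\R e$ for some $e\ne0$; thus, in any basis $(e,f)$ with $e$ spanning $[\G,\G]$, the bracket $[e,f]$ is a non-zero multiple of $e$. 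A line in a two-dimensional pseudo-Euclidean plane is either nondegenerate (automatically positive definite in the Euclidean case, and positive or negative definite in the Lorentzian case) or totally isotropic (possible only in the Lorentzian case), which is exactly the trichotomy separating cases 2--4 from case 5.

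For cases 2--4 I would take the orthonormal basis $(e,f)$ with $e\in[\G,\G]$ and $f\in[\G,\G]^\perp$ (legitimate since $[\G,\G]$ is nondegenerate), put $\e_e=\langle e,e\rangle$, $\e_f=\langle f,f\rangle$, and write $[e,f]=\mu e$ with $\mu\ne0$. Evaluating \eqref{levicivita} on the four pairs drawn from $\{e,f\}$ yields $\mathrm{L}_f=0$ and $\mathrm{L}_e=\la\,e\we f$ with $\la=\mu\e_f$; then $\Ku(e,f)=\mathrm{L}_{[e,f]}-[\mathrm{L}_e,\mathrm{L}_f]=\mu\,\mathrm{L}_e=\mu\la\,e\we f=\la^2\e_f\,e\we f$. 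Since $\e_f=+1$ both in the Euclidean case and when $[\G,\G]$ is negative definite (so $f$ is spacelike), and $\e_f=-1$ when $[\G,\G]$ is positive definite (so $f$ is timelike), this reproduces items 2, 4 and 3 respectively.

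For case 5 one has $[\G,\G]=\R e$ with $\langle e,e\rangle=0$; I would complete $e$ to a basis $(e,f)$ with $\langle e,e\rangle=\langle f,f\rangle=0$ and $\langle e,f\rangle=1$ (a standard adjustment using that $\R e=e^\perp$), and again write $[e,f]=\mu e$. Running \eqref{levicivita} over the four pairs from $\{e,f\}$ — now expanding an arbitrary vector as $v=\langle v,f\rangle e+\langle v,e\rangle f$ when converting scalar products back into vectors — gives $\mathrm{L}_e=0$ and $\mathrm{L}_f=\la\,e\we f$. Since $\mathrm{L}_e=0$ and $[e,f]\in\R e$, it follows that $\Ku(e,f)=\mathrm{L}_{[e,f]}-[\mathrm{L}_e,\mathrm{L}_f]=0$.

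I do not expect any structural obstacle here: the whole argument is bookkeeping with \eqref{levicivita}. The only delicate points are keeping track of the causal characters of $e$ and $f$ in the Lorentzian nondegenerate cases, which is what makes the sign of $\Ku(e,f)$ flip through $\e_f$, and, in the isotropic case, using the correct expansion of a vector in a null basis when passing from the computed inner products $\langle\mathrm{L}_uv,w\rangle$ back to the vectors $\mathrm{L}_uv$.
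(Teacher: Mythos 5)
Your computation is correct: Koszul's formula in the adapted basis gives exactly $\mathrm{L}_f=0$, $\mathrm{L}_e=\la\, e\we f$ with $\la=\mu\e_f$ in the nondegenerate cases (whence $\Ku(e,f)=\mu\mathrm{L}_e=\la^2\e_f\, e\we f$, and the sign trichotomy through $\e_f$ is exactly right), and $\mathrm{L}_e=0$, $\mathrm{L}_f=\la\, e\we f$, $\Ku(e,f)=0$ in the isotropic case. The paper states Proposition \ref{pr8} as a known result and gives no proof, so there is nothing to compare against; your direct verification is the standard argument and fills that gap correctly.
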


       Let $(\G,\prs)$ be a semi-symmetric Lorentzian Lie algebra with $\dim\G=3$ or $\dim\G=4$. For any $\la\not=0$, we denote $\G_\la=\ker(\Ri-\la \mathrm{Id}_\G)$ and $\G_0=\ker(\Ri^2)$.   According to \eqref{split} and Proposition \ref{pr7},  $\G$ has one of the following types:
       \begin{enumerate}\item[]$(S30\la)$ $\dim\G=3$ and $\G=\G_0\oplus\G_\la$ with $\dim\G_0=1$, $\G_0.\G_\la\subset\G_\la$, $\G_0.\G_0\subset\G_0$ and $\la\not=0$. 
       \item[]$(S3\la)$ $\dim\G=3$ and $\G=\G_\la$ with $\la\not=0$.
       \item[]$(S30)$ $\dim\G=3$ and $\G=\G_0$.
      \item[]$(S4\mu\la)$  $\dim\G=4$ and $\G=\G_\mu\oplus\G_\la$ with $\dim\G_\mu=\dim\G_\la=2$,  $\la\not=\mu$,  $\la\not=0$, $\mu\not=0$, $\G_\mu.\G_\la\subset\G_\la$, $\G_\la.\G_\mu\subset\G_\mu$, $\G_\la.\G_\la\subset\G_\la$ and $\G_\mu.\G_\mu\subset\G_\mu$.
      \item[]$(S40^1\la)$ $\dim\G=4$ and $\G=\G_0\oplus\G_\la$ with $\dim\G_0=1$, $\G_0.\G_\la\subset\G_\la$, $\G_0.\G_0\subset\G_0$ and $\la\not=0$.
      \item[]$(S40^2\la)$  $\dim\G=4$ and $\G=\G_0\oplus\G_\la$ with $\dim\G_\la=2$, $\G_0.\G_\la\subset\G_\la$,  $\G_0.\G_0\subset\G_0$ and $\la\not=0$.
       \item[]$(S4\la)$ $\dim\G=4$ and $\G=\G_\la$ with $\la\not=0$.
             \item[]$(S40)$ $\dim\G=4$ and $\G=\G_0$.
            \end{enumerate} Moreover, for any type above we have from Sections \ref{section2} and \ref{section3}  an exact expression of the curvature. Therefore, in order to complete the study, we need to determine from the curvature the Levi-Civita product and hence the Lie algebra structure. This is the purpose of what will follow.

      \subsection{ Type $(S30\la)$} 
      
      \begin{pr}\label{30la} Let $(\G,\prs)$ be a three dimensional semi-symmetric Lorentzian Lie algebra of type $(S30\la)$. Then $\G_0.\G=\G.\G_0=0$, $\G_\la$ is an ideal for the Levi-Civita product and hence $\G$ is a product as a Lie algebra of $\G_0$ with $\G_\la$.
      
      \end{pr}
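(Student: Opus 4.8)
The plan is to prove that $\mathrm{R}_u=\mathrm{L}_u=0$ for a vector $u$ spanning $\G_0$, where $\mathrm{R}_u,\mathrm{L}_u$ are the right and left multiplications of the Levi-Civita product of $\G$; everything else in the statement is then formal. First I would collect the data available for free: since $\dim\G_0=1$ we have $u.u=0$ and $\Ku(u,\cdot)=0$ (indeed $\Ku(u,u)=0$ and $\Ku(\G_0,\G_\la)=0$ by Proposition \ref{pr2}), and Proposition \ref{pr7} gives $[\mathrm{R}_u,\mathrm{L}_u]=\mathrm{R}_u^2$ with $\mathrm{R}_u$ nilpotent. Substituting $\Ku(u,\cdot)=0$ into \eqref{cu2} produces the operator identity
\[
\mathrm{R}_v\mathrm{R}_u=\mathrm{R}_{u.v}+[\mathrm{R}_v,\mathrm{L}_u],\qquad v\in\G,
\]
which, together with $u.v=\mathrm{R}_uv$, carries the rest of the computation.

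The crucial step is Lemma \ref{Le}: applied to the $3$-dimensional space $\G$ with $A=\mathrm{R}_u$ and $B=\mathrm{L}_u$, the relation $[\mathrm{R}_u,\mathrm{L}_u]=\mathrm{R}_u^2$ forces $\mathrm{R}_u=0$ or $\mathrm{L}_u=0$, and I would show each alternative implies the other. If $\mathrm{R}_u=0$, the displayed identity collapses to $[\mathrm{R}_v,\mathrm{L}_u]=0$ for every $v$; evaluating $[\mathrm{R}_a,\mathrm{L}_u](u)$ for $a\in\G_\la$ and using $\mathrm{R}_au=\mathrm{L}_ua$ and $\mathrm{L}_uu=u.u=0$ gives $\mathrm{L}_u^2=0$, and a skew-symmetric endomorphism of a Lorentzian vector space with vanishing square is zero (its image is totally isotropic, hence of dimension $\le1$, while the rank of a skew-symmetric endomorphism is even), so $\mathrm{L}_u=0$. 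If instead $\mathrm{L}_u=0$, then $\mathrm{R}_u^2=0$, so if $\mathrm{R}_u\neq0$ its image is a line $\R w$ with $0\neq w\in\G_\la$; substituting $\Ku(a,u)=0$ ($a\in\G_\la$) into \eqref{cu2} and evaluating at $u$ gives $\mathrm{R}_b\mathrm{R}_u=0$ for all $b\in\G$, whence $b.w=0$ for all $b$ (i.e. $\mathrm{L}_w=0$) and $\mathrm{R}_w=\mathrm{R}_{v_0}\mathrm{R}_u=0$ for any $v_0$ with $\mathrm{R}_uv_0=w$. Then $\mathrm{L}_zw=w.z=\mathrm{R}_wz=0$ for all $z$, so the curvature formula $\Ku(x,y)=\mathrm{L}_{[x,y]}-[\mathrm{L}_x,\mathrm{L}_y]$ yields $\Ku(x,y)w=0$ for all $x,y$, which by \eqref{k3} forces $\mathrm{Ric}(w)=0$, contradicting $w\in\G_\la$ and $\la\neq0$. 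Hence $\mathrm{R}_u=0$, and the first case then gives $\mathrm{L}_u=0$ as well.

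It remains to read off the statement. From $\mathrm{R}_u=0$ we get $\G_0.\G=\mathrm{R}_u(\G)=0$ and from $\mathrm{L}_u=0$ we get $\G.\G_0=\mathrm{L}_u(\G)=0$. For $a,b\in\G_\la$ one has $\langle a.b,u\rangle=-\langle a,\mathrm{L}_bu\rangle=-\langle a,u.b\rangle=0$ because $u.b\in\G_0.\G=0$, so $\G_\la.\G_\la\subset\G_\la$; combined with the vanishing of every Levi-Civita product having a factor in $\G_0$, this shows that $\G_\la$ and $\G_0$ are ideals for the Levi-Civita product. Passing to brackets, $[\G_0,\G_\la]=0$, $[\G_\la,\G_\la]\subset\G_\la$ and $\G_0$ is central, so $\G=\G_0\times\G_\la$ as Lie algebras.

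The step I expect to be the main obstacle is the second alternative of the dichotomy, namely excluding a nonzero nilpotent $\mathrm{R}_u$ when $\mathrm{L}_u=0$: this is the only point where one must genuinely combine the Levi-Civita identity \eqref{cu2}, the vanishing of $\Ku$ on mixed arguments, and the fact that $\la\neq0$; the remainder is formal once Lemma \ref{Le} has been applied.
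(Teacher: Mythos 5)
Your skeleton is the paper's (reduce everything to $\mathrm{R}_u=\mathrm{L}_u=0$ for a generator $u$ of $\G_0$, get the dichotomy from Lemma \ref{Le}, then close each branch), but both branches are closed using the identity $\mathrm{R}_v\mathrm{R}_u=\mathrm{R}_{u.v}+[\mathrm{R}_v,\mathrm{L}_u]$ read with $u.v=\mathrm{R}_uv=\mathrm{L}_vu$, and with that reading the identity is false. Expanding $\Ku(u,w)v=\mathrm{L}_{[u,w]}v-[\mathrm{L}_u,\mathrm{L}_w]v$ as a function of $w$ gives $\Ku(u,\cdot)v=-\mathrm{R}_v\mathrm{R}_u+\mathrm{R}_{\mathrm{L}_uv}+[\mathrm{R}_v,\mathrm{L}_u]$: the middle term is $\mathrm{R}_{\mathrm{L}_uv}$, not $\mathrm{R}_{\mathrm{L}_vu}$ (this is also the only reading under which the relation $[\mathrm{R}_u,\mathrm{L}_u]=\mathrm{R}_u^2$ of Proposition \ref{pr7} follows from $u.u=0$; the sentence ``$u.v=\mathrm{L}_vu$'' in Section \ref{section4} is inconsistent with \eqref{cu2} and you should not combine the two). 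Concretely, in the branch $\mathrm{R}_u=0$ the identity does not collapse to $[\mathrm{R}_v,\mathrm{L}_u]=0$ but to $[\mathrm{R}_v,\mathrm{L}_u]=-\mathrm{R}_{\mathrm{L}_uv}$, and your evaluation at $u$ becomes the tautology $-\mathrm{L}_u^2v=-\mathrm{L}_u^2v$; so $\mathrm{L}_u^2=0$ is not obtained and the implication $\mathrm{R}_u=0\Rightarrow\mathrm{L}_u=0$ is unproved. (The paper closes this branch by a different mechanism: when $\mathrm{R}_u=0$, $\G_\la$ is a two--dimensional metric Lie subalgebra whose induced curvature is $-\frac{\la}{2}\,v\wedge w\neq0$, and $\mathrm{L}_u=\ad_u$ restricts to a skew--symmetric derivation of it, which must vanish by the structure of two--dimensional algebras in Proposition \ref{pr8}.)

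In the branch $\mathrm{L}_u=0$, your intermediate conclusion $\mathrm{R}_b\mathrm{R}_u=0$ (hence $\mathrm{L}_w=0$ for $w$ spanning $\mathrm{Im}\,\mathrm{R}_u$) is correct, but it comes from the correct form of \eqref{cu2} ($0=-\mathrm{R}_v\mathrm{R}_u+\mathrm{R}_{\mathrm{L}_uv}+[\mathrm{R}_v,\mathrm{L}_u]=-\mathrm{R}_v\mathrm{R}_u$), not from an evaluation at $u$, which under your convention yields nothing. The next step is the genuine gap: $\mathrm{R}_w=\mathrm{R}_{v_0}\mathrm{R}_u$ is not a valid identity ($\mathrm{R}_{x.y}$ is not $\mathrm{R}_y\mathrm{R}_x$), and the correct \eqref{cu2} only gives $\mathrm{R}_{\mathrm{L}_uv_0}=0$, which says nothing about $\mathrm{R}_w$ for $w=\mathrm{L}_{v_0}u$. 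In fact once $\mathrm{L}_w=0$, for $a\in\G_\la$ independent of $w$ one has $-\frac{\la}{2}\,w\wedge a=\Ku(w,a)=\mathrm{L}_{[w,a]}=-\mathrm{L}_{\mathrm{R}_wa}\neq0$, so $\mathrm{R}_w\neq0$ at this stage; the contradiction cannot be reached by your shortcut and requires the finer analysis the paper performs (decomposing $a.w$ into its $\G_0$-- and $\G_\la$--components to force $\G_\la.\G_\la\subset\G_\la$, i.e.\ $\mathrm{R}_u=0$). Your final readoff paragraph (from $\mathrm{R}_u=\mathrm{L}_u=0$ to the product structure) is fine, but the two vanishing statements are exactly where the work lies, and neither is established as written.
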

    \begin{proof}  
       We have $\G=\G_0\oplus\G_\la$ with $\dim\G_0=1$,  $\la\not=0$ and $\G_0.\G_\la\subset\G_\la$ and $\G_0.\G_0=\{0\}$. This implies that $\G_\la.\G_0\subset\G_\la$. Choose a generator $u$ of $\G_0$. According to Proposition \ref{pr7}, $\mathrm{R}_u$ is nilpotent and $[\mathrm{R}_u,\mathrm{L}_u]=\mathrm{R}_u^2$. But $\mathrm{R}_u(u)=0$ and $\mathrm{R}_u(\G_\la)\subset\G_\la$  hence $\mathrm{R}_u^2=0$.
Moreover,       
        by virtue of Propositions \ref{pr2}, \ref{pr3} and \ref{pr7}, for any $v,w\in\G_\la$, $\Ku(v,w)=-\frac{\la}2 v\wedge w$ and $\Ku(u,.).=\Ku(.,.)u=0$.  To conclude, we will  
             show that $\mathrm{L}_u=\mathrm{R}_u=0$. Remark that $\mathrm{R}_u=0$ is equivalent to $\G_\la.\G_\la\subset\G_\la$.\\
          According to Lemma \ref{Le}, $\mathrm{L}_u=0$ or $\mathrm{R}_u=0$.   Suppose first that $\mathrm{R}_u=0$.  Then $\G_\la$ is a pseudo-Euclidean Lie algebra with non vanishing curvature and $\mathrm{L}_u$ is a skew-symmetric derivation of $\G_\la$ and hence $\mathrm{L}_u=0$.
                         Suppose now that   $\mathrm{R}_u\not=0$. Then $\mathrm{L}_u=0$ and $\mathrm{Im}\mathrm{R}_u$ is a one dimensional subspace of $\G_\la$. Choose a generator $v=x.u\in\mathrm{Im}\mathrm{R}_u$. We have
          $$0=\mathrm{L}_{[u,x]}-[\mathrm{L}_u,\mathrm{L}_x] =\mathrm{L}_{x.u}.$$So $\mathrm{L}_v=0$. Then, for any $w\in\G_\la$,
          $$-\frac{\la}2 v\wedge w=\mathrm{L}_{[v,w]}-[\mathrm{L}_v,\mathrm{L}_w] =\mathrm{L}_{w.v}.$$
          Write $w.v=au+w_1$ with $w_1\in\G_\la$. We have $w_1\not=0$,  $\langle w_1,v\rangle=0$ and $\{w_1,v\}$ are linearly independent. This implies that $\G_\la.\G_\la\subset\G_\la$ and $\mathrm{R}_u=0$, which completes the proof.\end{proof}    
      
 \subsection{ Type $(S30)$} 
 
 In this case $\G=\G_0=\ker\Ri^2$. If $\Ri=0$ then $\G$ is a flat Lorentzian Lie algebra. There are, up to an isomorphism, six three dimensional flat Lorentzian Lie algebras, three unimodular and three nonunimodular (see \cite{bou1, bou}).

 \begin{pr}\label{pr30} Let $(\G,\prs)$ be a three dimensional semi-symmetric Lorentzian Lie algebra such that its curvature is Ricci isotropic. Then there exists a basis $(e,f,g)$ of $\G$ such that the non vanishing products are $\langle e,e\rangle=\langle f,g\rangle=1$, $\langle f,f\rangle=\langle g,g\rangle=0$ and the non vanishing Lie brackets have one of the following types:\begin{enumerate}\item[$(i)$]
 $ [e,f]=af,\;[e,g]=be-ag+\frac{1+2b^2}{2a}f,\;[f,g]=-bf,a,b\in\R, a\not=0.$\item[$(ii)$]
             $[e,g]=ae+bf,\;[f,g]=\frac{1+a^2}{a}f,\; a,b\in\R,\;a\not=0.$\end{enumerate}
      In both cases, $\G$ is not second-order locally symmetric and $\mathfrak{h}(\G)=\mathfrak{h}(\Ku)=\mathrm{span}\{e\wedge f\}$.

 \end{pr}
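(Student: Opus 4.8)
The plan is to start from Proposition \ref{pr3}(2), which already furnishes a basis $(e,f,g)$ with $\langle e,e\rangle=\langle f,g\rangle=1$, $\langle f,f\rangle=\langle g,g\rangle=0$, Ricci operator $\Ri(e)=\Ri(f)=0$, $\Ri(g)=f$, and curvature $\Ku(e,f)=\Ku(f,g)=0$, $\Ku(e,g)=-e\we f$. In particular $\mathfrak h(\Ku)=\mathrm{span}\{e\we f\}$. First I would translate this curvature data into constraints on the Levi-Civita product $\mathrm{L}$ via the formula $\Ku(u,v)=\mathrm{L}_{[u,v]}-[\mathrm{L}_u,\mathrm{L}_v]$ together with Koszul's formula \eqref{levicivita}. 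Writing $\mathrm{L}_e,\mathrm{L}_f,\mathrm{L}_g$ as $3\times3$ matrices in the basis $(e,f,g)$ that are skew-symmetric for $\prs$, the skew-symmetry condition already cuts the number of unknowns drastically; one should also exploit \eqref{eq7}, namely that $(\G.\G)^\perp=\{u:\mathrm{R}_u=0\}$ and $[\G,\G]^\perp=\{u:\mathrm{R}_u=\mathrm{R}_u^*\}$, to locate where the bracket can be nonzero.

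The key structural observation I would push is that the holonomy Lie algebra is one-dimensional, $\mathfrak h(\G)=\mathfrak h(\Ku)=\mathrm{span}\{e\we f\}$: indeed $\mathfrak h(\G)$ must contain $\Ku(e,g)=-e\we f$ and be stable under $[\mathrm{L}_u,\cdot]$ for all $u$, so once one shows $[\mathrm{L}_u,e\we f]\in\mathrm{span}\{e\we f\}$ for each basis vector $u$ (using \eqref{der}, $[\mathrm{L}_u,e\we f]=(\mathrm{L}_ue)\we f+e\we(\mathrm{L}_uf)$, and the curvature/Ricci constraints), the holonomy collapses to $\mathrm{span}\{e\we f\}$. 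This severely restricts $\mathrm{L}_u$. Then I would run the second (differential) Bianchi identity \eqref{2bianchi} and the semi-symmetry relation \eqref{semi} — though for a rank-one holonomy the latter is automatic — to kill further parameters, and finally impose the Jacobi identity on the resulting brackets. I expect the surviving one- or two-parameter families of bracket structures to organize themselves according to whether $f\in[\G,\G]$ with $e$ also appearing (leading to a nonzero $\mathrm{R}_e$, case $(i)$) or whether $e$ lies in the image of ad and $[f,g]$ carries the normalization (case $(ii)$); the constants $\tfrac{1+2b^2}{2a}$ and $\tfrac{1+a^2}{a}$ should drop out of the requirement that $\Ri(g)=f$ exactly (i.e.\ the $\frac12$-type normalization forced by $\Ku(e,g)=-e\we f$ and the definition of the Ricci trace), and the Jacobi identity should relate the remaining coefficients.

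Having the two bracket families, the remaining assertions are checked directly. For $\mathfrak h(\G)=\mathfrak h(\Ku)=\mathrm{span}\{e\we f\}$ one recomputes $\mathrm{L}$ from \eqref{levicivita} for each family and verifies $[\mathrm{L}_u,e\we f]\in\mathrm{span}\{e\we f\}$, so no enlargement occurs. For the failure of second-order local symmetry, the cleanest route is: second-order locally symmetric would force $\na^2\Ku=0$, equivalently $\mathrm{L}_u(\mathrm{L}_v(\Ku))=0$ appropriately; it suffices to exhibit one pair for which $\mathrm{L}_u(\Ku)\neq0$ (so $\G$ is not locally symmetric) and then that $\mathrm{L}_w(\mathrm{L}_u(\Ku))\neq0$ for some $w$. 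Concretely, since $\Ku$ is supported on $\Ku(e,g)$, one computes $\mathrm{L}_u(\Ku)(e,g)=[\mathrm{L}_u,\Ku(e,g)]-\Ku(\mathrm{L}_ue,g)-\Ku(e,\mathrm{L}_ug)$ for $u=f$ (or $u=e$) in each family and shows it is a nonzero multiple of $e\we g$ or $e\we h$-type term, then iterates once more. The main obstacle is the first step — extracting $\mathrm{L}$ from the curvature — because, as the authors themselves note, this is a quadratic system in many unknowns; the saving grace here is the rank-one holonomy, the smallness of the dimension, and the rigidity coming from the degenerate part $\mathrm{span}\{f,g\}$ being $\mathrm{Ric}$-invariant, which together should make the system tractable by hand (or, as the paper remarks, with computer assistance).
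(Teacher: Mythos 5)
Your overall strategy coincides with the paper's: start from the normal form of Proposition \ref{pr3}(2), write $\mathrm{L}_e,\mathrm{L}_f,\mathrm{L}_g$ as unknown skew-symmetric endomorphisms, and pin them down with the differential Bianchi identity \eqref{2bianchi} together with the six equations $\Ku(u,v)=\mathrm{L}_{[u,v]}-[\mathrm{L}_u,\mathrm{L}_v]$. However, the step you advertise as the key one is circular. You want to use $\mathfrak{h}(\G)=\mathfrak{h}(\Ku)=\mathrm{span}\{e\wedge f\}$ to ``severely restrict'' the $\mathrm{L}_u$ before solving; but the needed inclusion $[\mathrm{L}_u,e\wedge f]\in\mathrm{span}\{e\wedge f\}$ is itself a statement about the unknown $\mathrm{L}_u$, and nothing in the curvature/Ricci data gives it a priori: $\mathfrak{h}(\G)$ is only required to contain $\mathfrak{h}(\Ku)$, and it could in principle be the two-dimensional subalgebra $\mathrm{span}\{e\wedge f,\,f\wedge g\}$ (which is closed under bracket, since $[f\wedge g,e\wedge f]=e\wedge f$). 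In the paper the equality $\mathfrak{h}(\G)=\mathfrak{h}(\Ku)$ is verified only \emph{after} the Levi-Civita products are computed. So the holonomy observation cannot carry any load, and what remains of your plan is exactly the quadratic system you leave unexecuted: in the paper's notation, Bianchi gives $y=0$ and $b=-2z$, the equation $\Ku(e,f)=0$ forces $x=z=b=0$, and the equations for $\Ku(e,g)=-e\wedge f$ and $\Ku(f,g)=0$ force $q=0$ and either $(c=0,\ a^2+ar+1=0)$ or $(c\not=0,\ a=r,\ p=\tfrac{2r^2+1}{2c})$; it is from these relations (not directly from the normalization $\Ri(g)=f$) that the coefficients $\tfrac{1+a^2}{a}$ and $\tfrac{1+2b^2}{2a}$ arise. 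Since your proposal only predicts that the two families ``should'' emerge, the core of the proposition is not actually established.

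Two smaller remarks. Imposing the Jacobi identity separately is redundant: with each $\mathrm{L}_u$ skew-symmetric and $[u,v]:=\mathrm{L}_uv-\mathrm{L}_vu$, the cyclic sum of $[[u,v],w]$ equals the cyclic sum of $\Ku(u,v)w$, so Jacobi follows from the algebraic Bianchi identity of the prescribed curvature once the curvature equations hold. And to rule out second-order local symmetry the quantity that must be shown nonzero is the genuine second covariant derivative $\mathrm{L}^2_{u,v}(\Ku)=\mathrm{L}_u\big(\mathrm{L}_v(\Ku)\big)-\mathrm{L}_{\mathrm{L}_uv}(\Ku)$, not merely $\mathrm{L}_u(\mathrm{L}_v(\Ku))$; the paper concludes by exhibiting $\mathrm{L}^2_{g,g}(\Ku)(e,g)\not=0$ in case $(i)$ and $\mathrm{L}^2_{e,g}(\Ku)(e,g)$, $\mathrm{L}^2_{g,g}(\Ku)(e,g)$ in case $(ii)$, computations which again require the explicit $\mathrm{L}$ that your outline stops short of producing.
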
 \begin{proof}  
      According to Proposition \ref{pr3}, there exists
     a basis $(e,f,g)$ of $\G$ such that the non vanishing products are $\langle e,e\rangle=\langle g,f\rangle=1$, 
         $\Ku(e,f)=\Ku(f,g)=0$ and $\Ku(e,g)=-e\we f$. 
         Put
         \[\mathrm{L}_e=ae\wedge f+be\wedge g+cf\wedge g,\;\mathrm{L}_f=xe\wedge f+ye\wedge g+zf\wedge g\esp \mathrm{L}_g=pe\wedge f+qe\wedge g+rf\wedge g.   \]
          We have
         \begin{eqnarray*}
         \mathrm{L}_e(\Ku)(f,g)&=&-\Ku(\mathrm{L}_ef,g)-\Ku(f,\mathrm{L}_eg)
         =be\wedge f,\\
         \mathrm{L}_f(\Ku)(g,e)&=&[\mathrm{L}_f,e\wedge f]-\Ku(\mathrm{L}_fg,e)-\Ku(g,\mathrm{L}_fe)
         =(\mathrm{L}_fe)\wedge f+e\wedge \mathrm{L}_ff+z\Ku(g,e)
         =-y g\wedge f+2ze\wedge f,\\
         \mathrm{L}_g(\Ku)(e,f)&=&-\Ku(\mathrm{L}_ge,f)-\Ku(e,\mathrm{L}_gf)
         =0.
         \end{eqnarray*}
         So the differential Bianchi identity gives $y=0$ and $b=-2z$. On the other hand, the relation
         $ 0=\mathrm{L}_{[e,f]}-[\mathrm{L}_{e},\mathrm{L}_{f}]$
        is equivalent to
        $ z^2=x^2-az=3xz-cz=0$ and hence $z=y=b=x=0$. Now the relations 
        $-e\wedge f=\mathrm{L}_{[e,g]}-[\mathrm{L}_{e},\mathrm{L}_{g}]$ and $\mathrm{L}_{[e,g]}-[\mathrm{L}_{e},\mathrm{L}_{g}]=0$ are equivalent to
      
      \[ q^2=a^2+1-2pc+pq+ar=ac-rc+rq+aq=aq=qc=0. \]  Thus $q=0$ and $c(a-r)=
      a^2-2cp+1+ar=0$. Therefore, the solutions are
      \[ (x=y=z=b=c=q=0\esp a^2+ar+1=0)\quad\mbox{or}\quad(x=y=z=b=q=0,c\not=0, a=r\esp p=\frac{2r^2+1}{2c}). \]Hence
      \[\mathrm{L}_e=ae\wedge f+cf\wedge g,\;\mathrm{L}_f=0\esp \mathrm{L}_g=pe\wedge f+rf\wedge g,   \]
      where $(c=0, a^2+ar+1=0)$ or $(c\not=0,p=\frac{2r^2+1}{2c})$. In both cases it is easy to check that $\mathfrak{h}(\Ku)=\mathrm{span}\{e\wedge f  \}$ is invariant by $\mathrm{L}$ which shows that it is the holonomy Lie algebra. Moreover, for the first case we have $\mathrm{L}^2_{g,g}(\Ku)(e,g)=-\frac{6(1+a^2)^2}{a^2} e\wedge f$ and in the second case $\mathrm{L}^2_{e,g}(\Ku)(e,g)=-4rc e\wedge f$ and $\mathrm{L}^2_{g,g}(\Ku)(e,g)=(1-4r^2) e\wedge f$. This shows that in both cases $\G$ is not second-order locally symmetric.
      \end{proof}

  \subsection{ Type $(S3\la)$} 
  
  In this case, $\G$ is a three dimensional  pseudo-Euclidean Lie algebra of constant curvature. If $\G$ is unimodular Euclidean then $\G$ is isometric to $\mathrm{so}(3)$ endowed with a multiple of the Killing form. If $\G$ is unimodular Lorentzian then $\G$ is isometric to $\mathrm{so}(2,1)$ endowed with a multiple of the Killing form. The situation in nonunimodular case is more complicated. Before giving a precise description of such pseudo-Euclidean Lie algebras, we introduce, for any nonunimodular pseudo-Euclidean Lie algebras $\G$, the vector $\mathbf{h}$ defined by $\langle u,\mathbf{h}\rangle=\tr(\ad_u)$. We have obviously, $\mathbf{h}.\mathbf{h}=0$ and since $\mathbf{h}\in[\G,\G]^\perp$, $\mathrm{R}_{\mathbf{h}}$ is a symmetric endomorphism. The proof of the following theorem is long and it is non relevant for our study so we omit it.
  
  \begin{theo} Let $(\G,\prs)$ be a three dimensional pseudo-Euclidean nonunimodular Lie algebra of constant curvature with  $\Ri=\la\mathrm{Id}_\G$. Then one of the following situations holds:
  \begin{enumerate}
  \item $\G$ is Euclidean   then there exists an orthonormal basis $(e,f,g)$ such that the non vanishing Lie brackets are given by
      \[ [e,f]=-cg+\sqrt{-\frac\la2}f,\;[e,g]=cf+\sqrt{-\frac\la2}g. \]                   
    \item $\G$ is Lorentzian   with $\langle \mathbf{h},\mathbf{h}\rangle<0$ then there exists an orthonormal basis $(e,f,g)$ with $e=\left(-\langle \mathbf{h},\mathbf{h}\rangle \right)^{-\frac12}\mathbf{h}$ such that the non vanishing Lie brackets are given by
              \[ [e,f]=-cg+\sqrt{\frac\la2}f,\;[e,g]=cf+\sqrt{\frac\la2}g. \] 
    \item $\G$ is Lorentzian   with $\langle \mathbf{h},\mathbf{h}\rangle>0$ and $\mathrm{R}_\mathbf{h}$ is diagonalizable then there exists an orthonormal basis $(e,f,g)$ with $e=\left(\langle \mathbf{h},\mathbf{h}\rangle \right)^{-\frac12}\mathbf{h}$, $\langle g,g\rangle=-1$ and such that the non vanishing Lie brackets are given by  
                    \[ [e,f]=-cg+\sqrt{-\frac\la2}f,\;[e,g]=-cf+\sqrt{-\frac\la2}g. \] 
  \item   $\G$ is Lorentzian   with $\langle \mathbf{h},\mathbf{h}\rangle>0$  and $\mathrm{R}_\mathbf{h}$ is non diagonalizable then there exists an orthonormal basis $(e,f,g)$ with $e=\left(\langle \mathbf{h},\mathbf{h}\rangle \right)^{-\frac12}\mathbf{h}$, $\langle g,g\rangle=\langle f,f\rangle=0$, $\langle f,g\rangle=1$ and such that the non vanishing Lie brackets are given by  
                  \[ [e,f]=2\sqrt{-\frac\la2}f,\;[e,g]=af\quad a<0. \]

  \end{enumerate}

  \end{theo}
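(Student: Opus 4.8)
The plan is to exploit the canonical vector $\mathbf{h}\in\G$ of a nonunimodular pseudo-Euclidean Lie algebra, defined by $\langle u,\mathbf{h}\rangle=\tr(\ad_u)$ and nonzero precisely because $\G$ is nonunimodular. First I would record that $\mathfrak{u}:=\mathbf{h}^\perp=\ker\big(u\mapsto\tr(\ad_u)\big)$ is an ideal of codimension $1$ which contains $[\G,\G]$ and is \emph{abelian}: every $x\in\mathfrak{u}$ satisfies $\tr(\ad_x|_\G)=0$, and since $\ad_x(\G)\subset[\G,\G]\subset\mathfrak{u}$ this forces $\tr(\ad_x|_{\mathfrak{u}})=0$, so $\mathfrak{u}$ is a $2$-dimensional Lie algebra all of whose adjoint operators are traceless and hence is abelian. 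Thus $\G=\R e\ltimes\mathfrak{u}$ for any $e\notin\mathfrak{u}$.

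Next I would split according to the causal character of $\mathbf{h}$ and dispose of the isotropic case first. If $\mathbf{h}$ is isotropic, choose a basis $(x,z,\bar{x})$ with $\mathbf{h}=x$, $\langle z,z\rangle=1$, $\langle x,\bar{x}\rangle=1$ and all other products zero; then $\mathfrak{u}=\mathrm{span}\{x,z\}$ is abelian, $[\bar{x},x]$ and $[\bar{x},z]$ lie in $\mathfrak{u}$, the Jacobi identity holds automatically, and a direct computation of the Ricci operator of the resulting family shows that $\Ri=\la\,\mathrm{Id}_\G$ forces $\la=0$. Hence an isotropic $\mathbf{h}$ cannot produce a nonzero constant curvature, which is why that possibility is absent from the statement.

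So assume $\mathbf{h}$ is non-isotropic, set $e=|\langle\mathbf{h},\mathbf{h}\rangle|^{-1/2}\mathbf{h}$, $\e=\langle e,e\rangle\in\{\pm1\}$ and $D=\ad_e|_{\mathfrak{u}}$, so that $\G=\R e\ltimes_D\mathfrak{u}$ is an orthogonal semidirect product; here $\mathfrak{u}$ is Euclidean when $\G$ is Euclidean or when $\e=-1$ ($\mathbf{h}$ timelike), and Lorentzian when $\e=1$ and $\G$ is Lorentzian ($\mathbf{h}$ spacelike). Let $S=\tfrac12(D+D^{*})$ and $A=\tfrac12(D-D^{*})$ be the symmetric and skew parts of $D$ with respect to $\langle\,,\,\rangle|_{\mathfrak{u}}$. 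Koszul's formula \eqref{levicivita} gives $\mathrm{L}_ee=0$, $\mathrm{L}_e|_{\mathfrak{u}}=A$, $\mathrm{L}_ve=-Sv$ and $\mathrm{L}_vw=\e\langle Sv,w\rangle e$ for $v,w\in\mathfrak{u}$; substituting this into $\Ku(u,v)=\mathrm{L}_{[u,v]}-[\mathrm{L}_u,\mathrm{L}_v]$ and using that $\mathfrak{u}$ is abelian, one obtains after a short computation
\[
\Ku(e,v)=\e\, e\we\big((S^2+[S,A])v\big)\esp\Ku(v,w)=\e\, Sv\we Sw,\qquad v,w\in\mathfrak{u}.
\]
Since $\dim\G=3$, the condition $\Ri=\la\,\mathrm{Id}_\G$ is, by \eqref{eq3} and Proposition \ref{pr3}, equivalent to $\Ku(u,v)=-\tfrac\la2\,u\we v$; because $S^2+[S,A]$ is symmetric and $Sv\we Sw=\det(S)\,v\we w$ in dimension $2$, this is in turn equivalent to the pair of identities
\[
S^2+[S,A]=-\tfrac{\e\la}2\,\mathrm{Id}_{\mathfrak{u}}\esp\det S=-\tfrac{\e\la}2 .
\]

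It remains to solve this system, which I would do case by case using the normal forms of Theorem \ref{reduction} for the symmetric endomorphism $S$ of the $2$-dimensional space $\mathfrak{u}$, together with $\tr S=\tr D=\langle e,\mathbf{h}\rangle=\e|\langle\mathbf{h},\mathbf{h}\rangle|^{1/2}\neq0$. When $\mathfrak{u}$ is Euclidean, in an orthonormal basis $S=\mathrm{diag}(s_1,s_2)$ and $A=\left(\begin{smallmatrix}0&-c\\ c&0\end{smallmatrix}\right)$, and the first identity forces $c(s_1-s_2)=0$ and $s_1^2=s_2^2=-\tfrac{\e\la}2$; with $s_1+s_2\neq0$ this yields $S=\sqrt{-\e\la/2}\,\mathrm{Id}_{\mathfrak{u}}$, hence the brackets of case $1$ ($\e=1$, $\G$ Euclidean, $\la<0$) and of case $2$ ($\e=-1$, $\mathbf{h}$ timelike, $\la>0$), with $c$ the free parameter. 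When $\mathfrak{u}$ is Lorentzian ($\mathbf{h}$ spacelike, $\e=1$) I would use that $\mathrm{R}_{\mathbf{h}}=-S$ on $\mathfrak{u}$ (from $\mathrm{L}_ve=-Sv$), so $\mathrm{R}_{\mathbf{h}}$ is diagonalizable iff $S$ is; the complex-eigenvalue type of $S$ is excluded directly by the first identity, so either $S$ is diagonalizable over $\R$, where the same reasoning forces $S$ scalar and gives case $3$, or $S$ is of type $\{2,\al2\}$, $S=\al\,\mathrm{Id}+N$ with $N^2=0\neq N$ in a null basis of $\mathfrak{u}$, in which case $A$ is pinned down to the associated diagonal skew operator and $\al^2=-\tfrac\la2$, giving $D=\left(\begin{smallmatrix}2\al&a\\0&0\end{smallmatrix}\right)$ and case $4$. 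I expect the real work, and the main source of error, to be the bookkeeping of this last step: parametrising symmetric and skew endomorphisms of a Lorentzian plane, discarding the complex-eigenvalue type, and keeping track of every sign and normalisation, on top of the preliminary exclusion of the isotropic case.
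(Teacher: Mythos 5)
The paper gives no proof of this theorem (it is explicitly omitted as "long and non relevant"), so there is no argument of the authors to compare yours against; I can only assess your proposal on its own terms. Its skeleton is sound and the computations you rely on do check out: $\mathfrak{u}=\mathbf{h}^\perp$ is a two-dimensional abelian ideal; the isotropic case is indeed impossible for $\la\neq 0$ (you only assert this, but the assertion is correct); and for non-isotropic $\mathbf{h}$ the Koszul computation gives exactly $\Lu_e|_{\mathfrak{u}}=A$, $\Lu_ve=-Sv$, $\Lu_vw=\e\langle Sv,w\rangle e$, hence $\Ku(e,v)=\e\, e\we\big((S^2+[S,A])v\big)$ and $\Ku(v,w)=\e\, Sv\we Sw$, and constancy of curvature is equivalent to $S^2+[S,A]=-\frac{\e\la}{2}\mathrm{Id}_{\mathfrak{u}}$ together with $\det S=-\frac{\e\la}{2}$, with $\tr S=\tr(\ad_e)=\langle e,\mathbf{h}\rangle\neq0$. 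The exclusion of the complex type and the treatment of the diagonalizable and nondiagonalizable types of $S$ go through essentially as you sketch them.

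The genuine gap is exactly in the "bookkeeping" you postpone, and it is not routine, because the normalizations appearing in the statement are part of what has to be proved. Two concrete points. First, in case 2 the vector $e=(-\langle \mathbf{h},\mathbf{h}\rangle)^{-\frac12}\mathbf{h}$ is pinned, so $\tr S=\langle e,\mathbf{h}\rangle<0$, and your system then forces $S=-\sqrt{\la/2}\,\mathrm{Id}_{\mathfrak{u}}$, not the positive root you write down: carried out carefully, your own argument produces the brackets with $-\sqrt{\la/2}$ (equivalently, the printed brackets hold for $-e$, not $e$); indeed, computing $\mathbf{h}$ from the brackets printed in the theorem gives $\mathbf{h}=-2\sqrt{\la/2}\,e$, so the statement itself carries a sign slip which a complete proof must detect and address rather than silently reproduce. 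Second, in case 4 you never derive the condition $a<0$, and it cannot be derived: for every $a\neq0$, of either sign, the data $S=\al\,\mathrm{Id}+N$, $A=\al\,\mathrm{diag}(1,-1)$, $\al=\sqrt{-\la/2}$ solve both identities, and since any isometric isomorphism must send $\mathbf{h}$ to $\mathbf{h}$ (hence fix $e$), the only admissible basis changes are the elements of $O(1,1)$ of $\mathfrak{u}$ preserving the normal form, which rescale $a$ by a strictly positive factor; the sign of $a$ is therefore an invariant and both signs occur. So your outline, completed honestly, classifies these Lie algebras but does not land verbatim on the stated normal forms; to finish you must do the sign and parameter analysis explicitly and either justify the restrictions in cases 2 and 4 by an identification you have not given, or correct them.
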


  \subsection{ Type  $(S4\mu\la)$}
  
  \begin{pr} Let $(\G,\prs)$ be a four-dimensional semi-symmetric Lorentzian Lie algebra of type $(S4\mu\la)$. Then $\G_\la.\G_\mu=\G_\mu.\G_\la=0$ and hence $\G$ is the product of a two dimensional Euclidean Lie algebra with a two dimensional Lorentzian Lie algebra.
  
  \end{pr}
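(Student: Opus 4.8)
The plan is to combine the structural relations of Proposition \ref{pr7} with the elementary fact that each left multiplication $\mathrm{L}_w$ lies in $\mathrm{so}(\G)$. First I would record the setup: since $\G=\G_\mu\oplus\G_\la$ is an orthogonal direct sum with $\dim\G_\mu=\dim\G_\la=2$ and $\G$ is nondegenerate, both summands are nondegenerate; and because $\G$ is Lorentzian of dimension $4$, exactly one of $\G_\mu,\G_\la$ has signature $(1,1)$ while the other is Euclidean. Note also that for type $(S4\mu\la)$ there is no $\G_0$-component, so the inclusions of Proposition \ref{pr7} specialize to $\G_\mu.\G_\mu\subset\G_\mu$, $\G_\la.\G_\la\subset\G_\la$, $\G_\mu.\G_\la\subset\G_\la$ and $\G_\la.\G_\mu\subset\G_\mu$.

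The key step is then purely linear-algebraic. Using the convention $u.v=\mathrm{L}_vu$, the inclusion $\G_\mu.\G_\mu\subset\G_\mu$ says that $\mathrm{L}_w(\G_\mu)\subset\G_\mu$ for every $w\in\G_\mu$; since $\mathrm{L}_w$ is skew-symmetric and $\G_\mu$ is nondegenerate, $\mathrm{L}_w$ also preserves $\G_\mu^\perp=\G_\la$. But $\G_\la.\G_\mu\subset\G_\mu$ says $\mathrm{L}_w(\G_\la)\subset\G_\mu$ for $w\in\G_\mu$, hence $\mathrm{L}_w(\G_\la)\subset\G_\mu\cap\G_\la=\{0\}$; that is, $\G_\la.\G_\mu=0$. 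Running the same argument with the roles of $\G_\mu$ and $\G_\la$ exchanged (now starting from $\G_\la.\G_\la\subset\G_\la$) gives $\G_\mu.\G_\la=0$.

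Finally I would deduce the product structure. From $[u,v]=\mathrm{L}_uv-\mathrm{L}_vu=v.u-u.v$ together with the two vanishing relations just obtained, $[\G_\mu,\G_\la]=0$; combined with $\G_\mu.\G_\mu\subset\G_\mu$ and $\G_\la.\G_\la\subset\G_\la$ this shows $\G_\mu$ and $\G_\la$ are complementary, orthogonal, nondegenerate ideals of $\G$, so $\G$ is their direct product as pseudo-Euclidean Lie algebras, i.e. a two-dimensional Euclidean Lie algebra times a two-dimensional Lorentzian one. I do not expect a serious obstacle here; the only points requiring care are keeping the convention $u.v=\mathrm{L}_vu$ straight when translating the set inclusions of Proposition \ref{pr7} into statements about the operators $\mathrm{L}_w$, and invoking nondegeneracy of the $\G_i$ to justify that $\mathrm{L}_w$ preserving $\G_\mu$ forces it to preserve $\G_\mu^\perp$.
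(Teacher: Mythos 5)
There is a genuine gap at your key ``purely linear-algebraic'' step. What Proposition \ref{pr7} actually establishes is that $\mathrm{L}_x\G_i\subset\G_i$ for every $x\in\G_i^\perp$, plus the consequences of skew-symmetry; specialized to type $(S4\mu\la)$ this says exactly that every left multiplication $\mathrm{L}_w$, $w\in\G$, preserves both $\G_\mu$ and $\G_\la$ --- and nothing more. The inclusion you feed into the argument, namely $\mathrm{L}_w(\G_\la)\subset\G_\mu$ for $w\in\G_\mu$, is not among the available facts: you obtain it by taking the sentence $u.v=\mathrm{L}_vu$ literally, but that sentence is inconsistent with how the dot is used everywhere else (in the proof of Proposition \ref{pr7} the inclusion $\G_j.\G_i\subset\G_i$ is identified with $\mathrm{L}_{\G_j}\G_i\subset\G_i$, and in the proof of Proposition \ref{30la} the element $x.u$ is the generator of $\mathrm{Im}\,\mathrm{R}_u$, i.e. $\mathrm{L}_xu$). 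Read consistently, $\G_\la.\G_\mu\subset\G_\mu$ only says $\mathrm{L}_w(\G_\mu)\subset\G_\mu$ for $w\in\G_\la$, and your cancellation $\mathrm{L}_w(\G_\la)\subset\G_\mu\cap\G_\la=\{0\}$ evaporates. Worse, given the invariance of $\G_\la$ under every $\mathrm{L}_w$, the statement $\mathrm{L}_w(\G_\la)\subset\G_\mu$ for all $w\in\G_\mu$ is equivalent to the vanishing of the mixed products, i.e. to the very conclusion to be proved; so the argument is circular modulo a misread lemma.

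That the result cannot follow from the invariance inclusions alone is visible in two ways. First, your proposal never uses semi-symmetry, the explicit form of the curvature, or the hypotheses $\la\not=0$, $\mu\not=0$; but invariance of an orthogonal splitting under all left multiplications does not by itself kill the cross actions --- compare type $(S40^1\la)$, where $\G_0$ acts on $\G_\la$ by a possibly nonzero skew-symmetric derivation and one only gets a semidirect product. Second, after Proposition \ref{pr7} and the two-dimensional normal forms of Proposition \ref{pr8}, the most general situation is $\mathrm{L}_e=ae\wedge f+bg\wedge h$, $\mathrm{L}_f=dg\wedge h$, $\mathrm{L}_g=ue\wedge f+vg\wedge h$, $\mathrm{L}_h=pe\wedge f$, with the cross coefficients $b,d,u,p$ a priori arbitrary; one must impose the curvature identities $\Ku(e,f)=\mathrm{L}_{[e,f]}-[\mathrm{L}_e,\mathrm{L}_f]$, $\Ku(g,h)=\mathrm{L}_{[g,h]}-[\mathrm{L}_g,\mathrm{L}_h]$ and $\Ku(f,h)=0$, where the first two curvatures are nonzero multiples of $e\wedge f$ and $g\wedge h$ precisely because $\la,\mu\not=0$; these give $ab=vu=0$ with $a,v\not=0$, hence $b=u=0$, and then $ap=dv=0$, hence $p=d=0$. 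Only after such a computation do $\G_\mu.\G_\la$ and $\G_\la.\G_\mu$ vanish; your final passage from that vanishing to the product decomposition is correct, but it is the easy part.
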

  
  \begin{proof}  We have    $\G=\G_\mu\oplus\G_\la$ with $\mu\not=0$, $\la\not=0$,  $\mu\not=\la$   $\G_\mu.\G_\mu\subset\G_\mu$,  $\G_\la.\G_\la\subset\G_\la$, $\G_\la.\G_\mu\subset\G_\mu$ and $\G_\mu.\G_\la\subset\G_\la$. We can suppose that $\G_\mu$ is Euclidean and $\G_\la$ is Lorentzian. According to Proposition \ref{pr7}, there exists an orthonormal basis $(e,f)$ of $\G_\mu$ and an orthonormal basis $(g,h)$ of $\G_\la$ such that, in restriction to $\G_\mu$, $\mathrm{L}_f$ vanishes and, in restriction to $\G_\la$, $\mathrm{L}_h$ vanishes. So
  \[ \mathrm{L}_{e}=ae\wedge f+bg\wedge h,\;\mathrm{L}_{f}=dg\wedge h,\;
  \mathrm{L}_{g}=ue\wedge f+vg\wedge h,\; \mathrm{L}_{h}=p e\wedge f,\;\Ku(e,f)=-\la e\wedge f\esp \Ku(g,h)=-\mu g\wedge h. \]

                     We have
                     \[ [e,f]=ae,\;[e,g]=bh+uf,\;[e,h]=bg+pf,\;[f,g]=dh-ue,\;[f,h]=dg-pe,\;[g,h]=vg. \]
      The relations
      \[ -\mu e\wedge f=\mathrm{L}_{[e,f]}-[\mathrm{L}_e,\mathrm{L}_f]\esp
      -\la g\wedge h=\mathrm{L}_{[g,h]}-[\mathrm{L}_g,\mathrm{L}_h] \]are equivalent to $a^2=-\la$, $v^2=\mu$, $ab=vu=0$ and hence $u=b=0$. Now the relation $0= \mathrm{L}_{[f,h]}-[\mathrm{L}_f,\mathrm{L}_h]$ is equivalent to $ap=dv-bp=0$ and hence $p=d=0$ and we get the result.              
        \end{proof}

  \subsection{ Type $(S40^1\la)$} 
  
  \begin{pr} Let $(\G,\prs)$ be a four-dimensional semi-symmetric Lorentzian Lie algebra of type $(S40^1\la)$. Then $\G.\G_0=0$, $\G_\la.\G_\la\subset\G_\la$ and hence $\G$ the semi-direct product of  $\G_0$  with the three dimensional pseudo-Euclidean Lie algebra $\G_\la$ of constant curvature and the action of $\G_0$ on $\G_\la$ is by a skew-symmetric derivation.
    
    \end{pr}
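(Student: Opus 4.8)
The plan is to imitate the proof of Proposition~\ref{30la}, the three–dimensional model $(S30\la)$; the only genuinely new feature is that here the eigenspace $\G_\la$ is three–dimensional, which is exactly what is needed to apply Lemma~\ref{Le} to it. Fix a generator $u$ of the line $\G_0$, which is nondegenerate (it is an orthogonal summand), so $\langle u,u\rangle\neq0$. Since $\G_0$ and $\G_\la$ are eigenspaces of $\Ri$ for distinct eigenvalues, Proposition~\ref{pr2} gives $\Ku(u,\cdot)=0$, and Proposition~\ref{pr7} gives $u.u=0$, the nilpotency of $\mathrm{R}_u$, and $[\mathrm{R}_u,\mathrm{L}_u]=\mathrm{R}_u^2$. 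I would first record that $\mathrm{R}_u$ and $\mathrm{L}_u$ both preserve $\G_\la$ and annihilate $\G_0$: for $\mathrm{R}_u$ because $\mathrm{R}_u v=u.v\in\G_0.\G_\la\subset\G_\la$ for $v\in\G_\la$ (part of the definition of the type) and $\mathrm{R}_u u=u.u=0$; for $\mathrm{L}_u$ because it is skew–symmetric with $\mathrm{L}_u u=0$, so $\mathrm{L}_u v\perp u$.

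The first step is the dichotomy: on the three–dimensional pseudo–Euclidean space $\G_\la$ the operators $\mathrm{R}_u|_{\G_\la}$ and $\mathrm{L}_u|_{\G_\la}$ have commutator equal to $(\mathrm{R}_u|_{\G_\la})^2$, with $\mathrm{L}_u|_{\G_\la}$ skew–symmetric, so Lemma~\ref{Le} forces $\mathrm{R}_u|_{\G_\la}=0$ or $\mathrm{L}_u|_{\G_\la}=0$; as both operators vanish on $\G_0$, this gives $\mathrm{R}_u=0$ or $\mathrm{L}_u=0$. The second step is to discard the branch $\mathrm{L}_u=0,\ \mathrm{R}_u\neq0$, exactly as at the end of the proof of Proposition~\ref{30la}. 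If $\mathrm{L}_u=0$ then $[u,x]=-\mathrm{R}_u x$, so $0=\Ku(u,x)=\mathrm{L}_{[u,x]}-[\mathrm{L}_u,\mathrm{L}_x]=-\mathrm{L}_{\mathrm{R}_u x}$, whence $\mathrm{L}_v=0$ for every $v\in\mathrm{Im}\,\mathrm{R}_u$; choosing $v\neq0$ in $\mathrm{Im}\,\mathrm{R}_u\subset\G_\la$, I would then compute $\Ku(v,w)$ for $w\in\G_\la$ in two ways — on one hand $\Ku(v,w)=\mathrm{L}_{[v,w]}$ because $\mathrm{L}_v=0$, and on the other hand Propositions~\ref{pr2} and~\ref{pr3} identify the restriction of $\Ku$ to $\G_\la\times\G_\la$ with the curvature of a three–dimensional Einstein tensor, so $\Ku(v,w)=-\frac\la2 v\wedge w$ — and derive a contradiction by comparing these (for instance evaluating both on $v$, where the second expression is $-\frac\la2(\langle w,v\rangle v-\langle v,v\rangle w)$), the precise computation being the analogue of the one concluding the proof of Proposition~\ref{30la}. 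Hence $\mathrm{R}_u=0$, that is $\G.\G_0=0$.

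It then remains to read off the structure. For $v,w\in\G_\la$ one has $\langle v.w,u\rangle=\langle\mathrm{L}_w v,u\rangle=-\langle v,\mathrm{L}_w u\rangle=-\langle v,u.w\rangle=0$ since $u.w=\mathrm{R}_u w=0$, so $\G_\la.\G_\la\subset\G_\la$; thus $\G_\la$ is a Lie subalgebra of $\G$ closed under the Levi–Civita product, and it is an ideal because $[u,v]=\mathrm{L}_u v\in\G_\la$. As $\G_\la$ is closed under the Levi–Civita product and $\Ku(\G_0,\G_\la)=0$, its Ricci operator equals $\Ri|_{\G_\la}=\la\mathrm{Id}_{\G_\la}$ with $\la\neq0$, so by \eqref{eq3} it has constant curvature. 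Finally $\mathrm{L}_v u=\mathrm{R}_u v=0$ gives $\mathrm{ad}_u|_{\G_\la}=\mathrm{L}_u|_{\G_\la}$, which is a derivation of the ideal $\G_\la$ and is skew–symmetric; therefore $\G=\G_0\ltimes\G_\la$ with $\G_0$ acting on $\G_\la$ by a skew–symmetric derivation.

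The main obstacle is the second step. In Proposition~\ref{30la} the space $\G_\la$ was two–dimensional, where a non-abelian metric Lie algebra has no non-zero skew–symmetric derivation at all, so the spurious branch is killed cheaply; here $\G_\la$ is a three–dimensional metric Lie algebra of non-zero constant curvature, which could a priori carry non-trivial nilpotent derivations, so eliminating $\mathrm{L}_u=0,\ \mathrm{R}_u\neq0$ genuinely requires the explicit normal form of $\Ku$ on $\G_\la$ together with the semi-symmetry and Bianchi identities rather than a soft algebraic argument.
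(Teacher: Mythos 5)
Your skeleton is the same as the paper's: Proposition \ref{pr7} plus Lemma \ref{Le}, applied to the restrictions of $\mathrm{R}_u$ and $\mathrm{L}_u$ to the nondegenerate three-dimensional space $\G_\la$, give the dichotomy $\mathrm{R}_u=0$ or $\mathrm{L}_u=0$, and once $\mathrm{R}_u=0$ is established your closing paragraph ($\G_\la.\G_\la\subset\G_\la$ by skew-symmetry, constant curvature of $\G_\la$ via Propositions \ref{pr2}--\ref{pr3}, $\ad_u=\mathrm{L}_u$ a skew-symmetric derivation, hence a semi-direct product) is exactly how the paper concludes. The genuine gap is at the step you yourself flag as the main obstacle: the branch $\mathrm{L}_u=0$, $\mathrm{R}_u\neq0$ is never actually eliminated. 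Computing $\Ku(v,w)$ ``in two ways'' and comparing them, for instance by evaluating at $v$, cannot yield a contradiction, because the two expressions are not in conflict: their equality is precisely the identity $\mathrm{L}_{[v,w]}=-\frac{\la}{2}\,v\wedge w$ for all $w\in\G_\la$ (using $\mathrm{L}_v=0$), which is a relation to be exploited, not a contradiction. So your proposal stops exactly where the real work of the proposition begins.

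What the paper does with that identity is the missing content. Since $\la\neq0$, the identity forces $\ker(\mathrm{R}_v|_{\G_\la})=\R v$, so one can choose $v_1,v_2$ completing $v$ to a basis of $\G_\la$ such that $\mathrm{L}_{[v,v_1]}$ and $\mathrm{L}_{[v,v_2]}$ are linearly independent (multiples of $v\wedge v_1$ and $v\wedge v_2$); from this the paper deduces $\G_\la.\G_\la\subset\G_\la$, which, as remarked in the proof of Proposition \ref{30la}, is equivalent to $\mathrm{R}_u=0$ and therefore contradicts the assumption $\mathrm{R}_u\neq0$. Your appeal to ``the analogue of the computation concluding Proposition \ref{30la}'' is not a substitute: there $\dim\G_\la=2$ and the argument writing $w.v=au+w_1$ relies on $\{w_1,v\}$ exhausting $\G_\la$, which has no verbatim counterpart in dimension three, so the rank argument on $\mathrm{R}_v$ and the ensuing deduction must be carried out anew. (Also, your side remark about Proposition \ref{30la} — that the spurious branch dies because a nonabelian two-dimensional metric Lie algebra has no nonzero skew-symmetric derivation — describes the elimination of the other branch, $\mathrm{R}_u=0\Rightarrow\mathrm{L}_u=0$, which does not arise here since $\mathrm{L}_u$ need not vanish in dimension four.) Finally, the paper's proof adds a consistency check you omit, namely that $\Ku(u,\cdot)=0$ is compatible with $D=\ad_u$ (automatic if $\G_\la$ is unimodular, and forced by $D\mathbf{h}=0$ otherwise); this goes beyond the literal statement, so its absence is not the problem — the unproved elimination of the $\mathrm{R}_u\neq0$ branch is.
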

    
    \begin{proof}  We have $\G=\G_0\oplus\G_\la$ with $\dim\G_0=1$,  $\la\not=0$ and $\G_0.\G_\la\subset\G_\la$ and $\G_0.\G_0=\{0\}$. This implies that $\G_\la.\G_0\subset\G_\la$. Choose a generator $u$ of $\G_0$. According to Proposition \ref{pr7}, $\mathrm{R}_u$ is nilpotent and $[\mathrm{R}_u,\mathrm{L}_u]=\mathrm{R}_u^2$. But $\mathrm{R}_u(u)=0$ and $\mathrm{R}_u(\G_\la)\subset\G_\la$ and hence, according to Lemma \ref{Le}, $\mathrm{R}_u=0$ or $\mathrm{L}_u=0$. Moreover,       
                by virtue of Propositions \ref{pr2}, \ref{pr3} and \ref{pr7}, for any $v,w\in\G_\la$, $\Ku(v,w)=-\frac{\la}2 v\wedge w$ and $\Ku(u,.).=\Ku(.,.)u=0$.  Let show that $\mathrm{R}_u=0$.

      Suppose  that $\mathrm{R}_u\not=0$,  hence $\mathrm{L}_u=0$ and $\mathrm{R}_u^2=0$. Then   $\mathrm{Im}\mathrm{R}_u$ is a one dimensional subspace of $\G_\la$. Choose a generator $v=x.u\in\mathrm{Im}\mathrm{R}_u$. We have, 
                $$0=\mathrm{L}_{[u,x]}-[\mathrm{L}_u,\mathrm{L}_x] =\mathrm{L}_{x.u}.$$So $\mathrm{L}_v=0$. Then, for any $w\in\G_\la$,
                $$-\frac{\la}2 v\wedge w=\mathrm{L}_{[v,w]}-[\mathrm{L}_v,\mathrm{L}_w] =\mathrm{L}_{w.v}.$$ Consider $\mathrm{R}_v:\G_\la\too\G$. From the relation above, we have $\ker\mathrm{R}_v=\R v$. So there exists two linearly independent vectors $v_1,v_2\in\G_\la$ such that $\{v,v_1,v_2\}$ is a basis of $\G_\la$,  $\{v_1.v,v_2.v\}$ are linearly independent with
                \[ \mathrm{L}_{v_1.v}=-\la v\wedge v_1\esp \mathrm{L}_{v_2.v}=-\la v\wedge v_2.\]This implies that $\G_\la.\G_\la\subset \G_\la$ and hence $\mathrm{R}_u=0$.  Finally, $\mathrm{R}_u=0$.
                
                Now $D=\mathrm{L}_u=\ad_u$ is a skew-symmetric derivation of $\G_\la$. If $\G_\la$ is unimodular then $D=\ad_v$ with $v\in\G_\la$ and since the metric on $\G_\la$ is bi-invariant, for any $w\in\G_\la$, $\mathrm{L}_w=\frac12\ad_w$. So
                \[ \Ku(u,w)=\mathrm{L}_{[u,w]}-[\mathrm{L}_u,\mathrm{L}_w]=\frac12\ad_{[v,w]}-\frac12[\ad_v,\ad_w]=0. \]
                
                If $\G_\la$ is nonunimodular then, for any $v\in\G_\la$ $\ad_{Dv}=[D,\ad_v]$ and hence $0=\tr(\ad_{Dv})=\langle Dv,\mathbf{h}\rangle$ which implies that $D\mathbf{h}=0$. One can check easily that this condition  suffices to insure that $\Ku(u,v)=0$ for any $v\in\G_\la$.\end{proof}

    \subsection{ Type $(S40^2\la)$}
    
    \begin{pr} Let $(\G,\prs)$ be a four-dimensional semi-symmetric Lorentzian Lie algebra of type $(S40^2\la)$. Then  $\G_0.\G=0$, $\G_\la.\G_\la\subset\G_\la$, $\G_\la.\G_0\subset\G_0$ and hence $\G$ is the semi-direct product of the pseudo-Euclidean Lie algebra $\G_\la$  with the abelian Lie algebra $\G_0$ and the action of $\G_\la$ on $\G_0$ is given by skew-symmetric endomorphisms.
        \end{pr}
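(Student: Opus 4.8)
The plan is to follow the pattern used for $(S4\mu\la)$ and $(S40^1\la)$: pin down the curvature with Proposition~\ref{pr2}, cut down the unknowns in the Koszul product with Proposition~\ref{pr7} and the skew-symmetry of $\mathrm{L}$, and then force the remaining products to vanish. First, write $\G=\G_0\oplus\G_\la$ orthogonally (Proposition~\ref{pr2}), with $\dim\G_0=\dim\G_\la=2$, $\Ku(\G_0,\G_\la)=0$, and every $\Ku(u,v)$ preserving $\G_0$ and $\G_\la$. Since $\dim\G_0=2$, the restriction of $\Ku$ to $\G_0$ is a curvature tensor on a two-dimensional nondegenerate space, hence of the form $(u,v)\mapsto c\,u\wedge v$; its Ricci operator is $c\,\mathrm{Id}_{\G_0}$ and equals $\Ri|_{\G_0}$, which is nilpotent. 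Thus $c=0$: the curvature vanishes on $\G_0$, $\Ri|_{\G_0}=0$, so $\Ri$ is in fact diagonalizable with $\G_0=\ker\Ri$, and the restriction of $\Ku$ to $\G_\la$ is the (two-dimensional) Einstein curvature, a nonzero multiple of $v\wedge w$.

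Next I would reduce everything to the single statement $\G_0.\G=0$, i.e. $\mathrm{R}_u=0$ for every $u\in\G_0$. From Proposition~\ref{pr7} and skew-symmetry of the $\mathrm{L}_x$'s one gets that $\mathrm{R}_u$ and $\mathrm{L}_u$ preserve the splitting for $u\in\G_0$, that $\G_0$ is a subalgebra whose induced Levi-Civita product is its own (hence a flat two-dimensional Lie algebra, so abelian or the non-unimodular Lorentzian algebra with totally isotropic derived ideal, by Proposition~\ref{pr8}), and that $\mathrm{L}_y(\G_0)\subset\G_\la$ for $y\in\G_\la$. Granting $\mathrm{R}_u=0$ for all $u\in\G_0$, the rest is formal: this says $\mathrm{L}_x|_{\G_0}=0$ for all $x$, whence $[\G_0,\G_0]=0$, and skew-symmetry of $\mathrm{L}_y$ gives $\G_\la.\G_\la\subset\G_\la$, so $\G_\la$ is a subalgebra — necessarily a non-abelian two-dimensional Einstein Lie algebra; the operators $\mathrm{L}_u|_{\G_\la}$ ($u\in\G_0$) are then skew-symmetric derivations of $\G_\la$ (use $\Ku(\G_0,\G_\la)=0$) and hence $0$, as such a Lie algebra has no nonzero skew derivation; so $\mathrm{L}_u=0$ on $\G_0$, $\G_0$ is an abelian ideal, and $\G=\G_\la\ltimes\G_0$ with $\G_\la$ acting by skew-symmetric endomorphisms.

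So the whole proof rests on proving $\mathrm{R}_u=0$ for $u\in\G_0$. I would feed relation~\eqref{cu2} with $u,v\in\G_0$: since $\Ku(u,w)v=0$ for every $w$ (because $\Ku(u,\G_\la)=0$ and $\Ku(u,\cdot)$ is trivial on $\G_0$), one obtains $\mathrm{R}_v\circ\mathrm{R}_u=\mathrm{R}_{u.v}+[\mathrm{R}_v,\mathrm{L}_u]$. Restricted to $\G_0$ this is only the same identity inside the flat algebra $\G_0$, hence vacuous; the content lies in the restriction to $\G_\la$, where $\mathrm{L}_u|_{\G_\la}$ is skew on a two-dimensional space. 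When $\G_0$ is abelian, $u.v=0$, and taking $v=u$ the restriction to $\G_\la$ is precisely the equation $A^2=[A,B]$ of Lemma~\ref{Le} with $A=\mathrm{R}_u|_{\G_\la}$ and $B=\mathrm{L}_u|_{\G_\la}$, so $\mathrm{R}_u|_{\G_\la}=0$ or $\mathrm{L}_u|_{\G_\la}=0$ for each $u$; a short further argument (again via $\Ku(\G_0,\G_\la)=0$ and Proposition~\ref{pr8}) upgrades this to $\mathrm{R}_u=0$. The hard part — as in the other four-dimensional cases of Section~\ref{section5} — will be to exclude the non-abelian model for $\G_0$, where $u.u\ne0$ destroys the clean appeal to Lemma~\ref{Le}: eliminating it, and closing the last step, will need a careful passage through the Jacobi and Koszul constraints coupling $\G_0$ and $\G_\la$, the part for which the authors lean on computer verification.
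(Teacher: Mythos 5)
Your reduction is aimed at the wrong statement, and that is a genuine gap rather than a presentational slip. With the convention actually used in Proposition \ref{pr7} (namely $x.y=\mathrm{L}_xy$; the line ``$u.v=\mathrm{L}_vu$'' in Section \ref{section4} is a misprint, as the proof of Proposition \ref{pr7} and the phrase ``$\G_\la.\G_0\subset\G_0$'' in the statement show), the conclusion $\G_0.\G=0$ means $\mathrm{L}_u=0$ for every $u\in\G_0$, \emph{not} $\mathrm{R}_u=0$. The two are not equivalent here: in the models the proposition describes, $\G=\G_\la\ltimes_\rho\G_0$ with a possibly nontrivial homomorphism $\rho:\G_\la\to\mathrm{so}(\G_0)$, one has $\mathrm{R}_uw=\mathrm{L}_wu=\rho(w)u$, which is nonzero as soon as $\rho\not\equiv0$; such algebras are semi-symmetric of type $(S40^2\la)$ (take $\G_\la$ the non-abelian Euclidean plane with its constant curvature and $\G_0$ Lorentzian abelian). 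So ``$\mathrm{R}_u=0$ for all $u\in\G_0$'' is false in general, and no ``short further argument'' can upgrade the dichotomy of Lemma \ref{Le} to it. For the same reason your auxiliary claim $\mathrm{L}_y(\G_0)\subset\G_\la$ for $y\in\G_\la$ is not a consequence of the hypotheses and fails in the target models (there $\mathrm{L}_y(\G_0)\subset\G_0$). A second, technical, problem: restricting the identity $[\mathrm{R}_u,\mathrm{L}_u]=\mathrm{R}_u^2$ from \eqref{cu2} to $\G_\la$ presupposes $\mathrm{R}_u(\G_\la)\subset\G_\la$, which is unknown at that stage (and is exactly what fails in the true models, where $\mathrm{R}_u(\G_\la)\subset\G_0$); the paper instead restricts to $e^\perp$, which is legitimate because $\mathrm{L}_ee=0$ and $\mathrm{Im}\,\mathrm{R}_e\subset e^\perp$.

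The two points you defer to heavy computation are in fact where the paper's short structural arguments do the work, so the proof as planned does not close. First, since each $\mathrm{L}_u$ ($u\in\G_0$) preserves the two $2$-dimensional blocks and $\mathrm{so}$ of a $2$-dimensional pseudo-Euclidean space is abelian, any two such operators commute, so $0=\Ku(u,v)=\mathrm{L}_{[u,v]}$; with the normal form of Proposition \ref{pr8} this kills the structure constant of $\G_0$ and excludes the non-abelian (isotropic derived ideal) model outright. Second, what must then be proved is $\mathrm{L}_u=0$ for all $u\in\G_0$ (via $N=\{u\in\G_0:\mathrm{L}_u=0\}$, Lemma \ref{Le} on $e^\perp$, and the fact that the non-flat $2$-dimensional algebra $\G_\la$ admits no nonzero skew-symmetric derivation) and, separately, $\mathrm{Im}\,\mathrm{R}_u\subset\G_0$ (equivalently $\G_\la.\G_\la\subset\G_\la$), which the paper obtains by contradiction: a $\G_\la$-component $v_1$ of some $v.u$ would satisfy $\mathrm{L}_{v_1}=0$, and then $\Ku(v_1,w)=\mathrm{L}_{[v_1,w]}\neq0$ forces $\G_\la.\G_\la\subset\G_\la$ after all. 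None of this needs a computer; but none of it is supplied, or correctly targeted, by your proposal.
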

      \begin{proof}  We have $\G=\G_0\oplus\G_\la$ with $\dim\G_0=2$, $\G_0.\G_0\subset\G_0$ and $\G_0.\G_\la\subset\G_\la$. Moreover, for any $u\in\G_0$ and $v,w\in\G_\la$, $\Ku(u,.).=\Ku(.,.)u=0$ and $\Ku(v,w)=-\frac{\la}2 u\wedge v$.       Let first show that $\G_0.\G_0=\{0\}$. Since $\G_0$ is a pseudo-Euclidean Lie algebra with vanishing curvature then $\G_0.\G_0=\{0\}$   when  $\G_0$ is Euclidean. If $\G_0$ is Lorentzian then according to Proposition \ref{pr8}, there exists a basis $(e,f)$ of $\G_0$ with $\langle e,f\rangle=1$ such that
            \[ \mathrm{L}_e=a g\wedge h,\; \mathrm{L}_f=ce\wedge f+b g\wedge h\esp[e,f]=cf. \]
      But the Lie algebra of skew-symmetric endomorphisms of a 2-dimensional pseudo-Euclidean vector space is abelian then, for any $u,v\in\G_0$, we have $[\mathrm{L}_u,\mathrm{L}_v]=0$ and hence $\mathrm{L}_{[u,v]}=0$.     Thus $c\mathrm{L}_f=0$ which implies $\G_0.\G_0=\{0\}$.\\
      Consider $N=\{u\in\G_0,\mathrm{L}_u=0  \}$. Since $\G_0.\G_0=\{0\}$ and $\dim \mathrm{L}(\G_0)\leq 1$ we have $\dim N\geq1$.  Suppose that $\dim N=1$. Therefore, we can choose an orthonormal basis $(e,f)$ of $\G_0$ such that $\mathrm{L}_e\not=0$ and $\mathrm{L}_f\not=0$. Since $e.e=0$, $\mathrm{L}_e$ left invariant $e^\perp$. We have also $\langle \mathrm{R}_ev,e\rangle=0$ and hence $\mathrm{R}_e$ leaves invariant $e^\perp$. Since $e.e=0$, we get from \eqref{cu2} that $[\mathrm{R}_e,\mathrm{L}_e]=\mathrm{R}_e^2$. According
      to Lemma \ref{Le}, the restriction of $\mathrm{R}_e$ to $e^\perp$ vanishes and hence its vanishes. A same argument shows that $\mathrm{R}_f=0$ and hence for any $u\in\G_0$, $\mathrm{R}_u=0$. This implies that $\G_\la.\G_\la\subset\G_\la$. Now, for any $u\in\G_0$, $\mathrm{L}_u$ is a skew-symmetric derivation of $\G_\la$ and hence $\mathrm{L}_u=0$. So we have shown that, for any $u\in\G_0$, $\mathrm{L}_u=0$. Let show now that $\G_\la.\G_\la\subset \G_\la$. Remark first that is equivalent to $\mathrm{Im}\mathrm{R}_u\subset\G_0$ for any $u\in\G_0$.\\
      Suppose that there exists $u\in\G_0$ such that $\mathrm{Im}\mathrm{R}_u\nsubset\G_0$. This means that there exists $v\in\G_\la$ such that $v.u=v_0+v_1$ where $v_0\in\G_0$ and $v_1\in\G_\la$ with $v_1\not=0$. Then $\mathrm{L}_{v.u}=\mathrm{L_{[v,u]}}=\mathrm{L}_{v_1}=0$. Therefore, for any $w\in\G_\la$, $\mathrm{L}_{w.v}=-\frac{\la}2 w\wedge v$. This implies that $\G_\la.\G_\la\subset \G_\la$ which is a contradiction. So we have proved so far that, for any $u\in\G_0$, $\mathrm{L}_u=0$,  $\G_\la.\G_\la\subset \G_\la$ and $\G_\la.\G_0\subset \G_0$. So $\G$ is the semi-direct product of $\G_\la$ with $\G_0$ and the action of $\G_\la$ on $\G_0$ is given by skew-symmetric endomorphisms.
       \end{proof}
    
    \subsection{ Type $(S4\la)$}
    
    In this case $\G$ is Einstein locally symmetric either of non null constant curvature if its primitive holonomy algebra has dimension 6 or not of constant curvature if its primitive holonomy algebra is of dimension 2.  In \cite{calvaruso}, there is a classification of four-dimensional Lorentzian Lie algebras, based on this classification we get the following  result.
    
    \begin{theo} Let $(\G,\prs)$ be a four-dimensional semi-symmetric Lorentzian Lie algebra of type $(S4\la)$. Then $\G$ is isomorphic $\R^4$ with its canonical basis $(e_i)_{i=1}^4$ and $\langle e_3,e_3\rangle=-1$ and the non vanishing Lie brackets has  one of the following forms:
    \begin{enumerate}\item $[e_1,e_2]=\e ae_1,[e_1,e_3]=ae_1,[e_1,e_4]=\de ae_1,[e_3,e_4]=-2a\de(\e e_2-e_3)$ $(a\not=0)$, (Constant sectional curvature $-a^2$),
    \item $[e_1,e_2]= \frac{\e \sqrt{a^2-b^2}}{2}e_1,[e_1,e_3]=-\frac{\de\e \sqrt{a^2-b^2}}{2}e_1,[e_1,e_4]=\frac{\de a+b}{2}e_1,[e_2,e_4]=b(e_2+\de e_3),[e_3,e_4]=a( e_2+\de e_3)$ $(b\not=-\de a)$, (Constant sectional curvature $-\frac{(a+\de b)^2}{4}$),
    \item $[e_1,e_2]= \frac{\e a\sqrt{a^2-b^2}}{b}e_1,[e_1,e_3]=\e \sqrt{a^2-b^2}e_1,[e_2,e_4]=be_2-a e_3,[e_3,e_4]=a e_2-\frac{a^2}b e_3$ $(b\not=\pm a)$, $\la=-\frac{(a^2-b^2)^2}{b^2}$, $\dim\mathfrak{h}(\Ku)=2$,
      \item $[e_1,e_2]=\e \sqrt{a^2-b^2}e_1+be_2, [e_3,e_4]=a e_3$ $(a\not=0)$, $\la=-a^2$ and $\dim\mathfrak{h}(\Ku)=2$,
      \item $[e_1,e_4]=ae_1,[e_2,e_4]=ae_2+be_3,[e_3,e_4]=be_2+ae_3$ $(a\not=0)$, (Constant sectional curvature $-a^2$),
      \item $[e_1,e_4]=\e \frac23ae_1,[e_2,e_4]=\e\frac23ae_2+ae_3,[e_3,e_4]=ae_2+\e\frac23ae_3$ $(a\not=0)$. (Constant sectional curvature $-\frac{4a^2}9$)
    
    \end{enumerate}
    In all the brackets above $\de=\pm1$ and $\e=\pm1$.
    
    \end{theo}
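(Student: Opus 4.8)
The plan is to identify the Lie algebras of type $(S4\la)$ with the locally symmetric Einstein Lorentzian Lie algebras having non-zero Einstein constant, and then to extract these from an existing list. First I would note that a Lie algebra of type $(S4\la)$ is exactly a four-dimensional Lorentzian Lie algebra with $\Ri=\la\,\mathrm{Id}_\G$, $\la\ne0$, whose curvature is semi-symmetric; since the associated left-invariant metric on the corresponding Lie group is Einstein with non null constant scalar curvature, Theorem \ref{theo2} forces it to be locally symmetric, so $\mathrm{L}_u(\Ku)=0$ for every $u\in\G$. Conversely, local symmetry always implies semi-symmetry (the latter generalizing $\na\Ku=0$), hence the Lie algebras of type $(S4\la)$ are precisely the locally symmetric Einstein Lorentzian Lie algebras with $\la\ne0$. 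Moreover, by Theorem \ref{main1} and Proposition \ref{constant}, such a $\G$ either has $\mathfrak{h}(\Ku)$ of dimension $6$, in which case $\Ku(u,v)=-\tfrac\la3\,u\we v$ and $\G$ is a Lie-group model of a Lorentzian space form of sectional curvature $-\la/3$, or $\mathfrak{h}(\Ku)$ has dimension $2$ and $\wi\Ku$ is of Petrov type I with eigenvalues $-\la$ and $0$.

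Next I would use the classification of four-dimensional Einstein Lorentzian Lie algebras obtained by Calvaruso and Zaeim in \cite{calvaruso}. From their list one keeps only the entries with non-zero Einstein constant (the Ricci-flat ones fall under type $(S40)$, not $(S4\la)$), and for each such entry one must decide whether it is locally symmetric. Concretely, for every candidate I would write the Levi-Civita product from \eqref{levicivita}, form the curvature $\Ku(u,v)=\mathrm{L}_{[u,v]}-[\mathrm{L}_u,\mathrm{L}_v]$ together with its covariant derivative $\mathrm{L}_u(\Ku)$, and impose $\mathrm{L}_u(\Ku)=0$ for all $u$; this converts the free parameters of the Calvaruso--Zaeim entry into a polynomial system whose solutions single out the semi-symmetric models. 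The expected outcome is the six families in the statement.

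It then remains to organize and normalize the survivors. Four of them have $\dim\mathfrak{h}(\Ku)=6$ and are Lie-group models of Lorentzian space forms; for these one reads off the value of the constant sectional curvature ($-a^2$, $-\frac{(a+\de b)^2}4$, $-a^2$, $-\frac{4a^2}9$ in items (1), (2), (5), (6)) and applies an orthonormal change of basis bringing the brackets to the displayed shape with $e_3$ the timelike vector. The remaining two have $\dim\mathfrak{h}(\Ku)=2$ with $\wi\Ku$ of Petrov type I; these are the locally symmetric but non-constant-curvature Einstein models, and here the normalization uses a basis adapted to the reducible holonomy of $\G$ and to the eigenspace structure of the left multiplications, producing the two families of items (3) and (4) with the indicated parameters $a,b$ and signs $\e,\de$.

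The principal obstacle is the sheer size and parameter-dependence of the computation: the Calvaruso--Zaeim list contains many entries, each requiring a full curvature tensor, its covariant derivative, and the solution of the resulting polynomial system in the structure constants, so that computer algebra is indispensable here (as noted in the introduction). A second, more subtle difficulty lies in the final bookkeeping --- checking that the six families are pairwise non-isomorphic (and that the stated parameter restrictions such as $b\ne\pm a$, $b\ne-\de a$, $a\ne0$ are exactly those preventing degeneration or coincidence of normal forms) and that no locally symmetric Einstein entry of \cite{calvaruso} has been missed; this is settled by comparing, case by case, the sectional curvature, the Petrov type of $\wi\Ku$, and the isomorphism class of $\G$.
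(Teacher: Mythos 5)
Your proposal is correct and follows essentially the same route as the paper: reduce type $(S4\la)$ via Theorem \ref{theo2} (together with Theorem \ref{main1}, which gives $\dim\mathfrak{h}(\Ku)\in\{2,6\}$) to locally symmetric Einstein Lorentzian Lie algebras with $\la\neq0$, and then extract and normalize the admissible models from the Calvaruso--Zaeim classification in \cite{calvaruso}, with the case-by-case computations done by computer algebra. The paper gives no further detail beyond this, so nothing essential is missing from your plan.
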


    \subsection{\bf Type $(S40)$}
    
    If $\Ri=0$ then $\G$ is Ricci flat and, according to Theorem \ref{main1}, its curvature tensor has a Petrov normal form of type II with $\wi\Ku^2=0$. In the list obtained by Calvaruso-Zaeim \cite{calvaruso}, the condition $\wi\Ku^2=0$ is equivalent to $\wi\Ku=0$.
    
    Before studying the others cases let make some important remarks. According to Theorem \ref{th22}, there exists a basis
        $(e,f,g,h)$  a  basis of $\G$ such that the non vanishing products are
            $\langle e,e\rangle=\langle f,f\rangle=\langle g,h\rangle=1$ and the non vanishing curvatures are 
       $\Ku(e,h)=Ae\we g$, $\Ku(f,h)=Bf\we g$
         with $A+B=-1$. Denote by $\mathfrak{h}(\G)$ the holonomy Lie algebra of $\G$. It is the smallest Lie algebra containing the $\Ku(u,v)$ and satisfying $[\mathrm{L}_u,\mathfrak{h}(\G)]\subset\mathfrak{h}(\G)$, for any $u,v\in\G$.
    The holonomy Lie algebras of Lorentzian manifolds are well understood (see \cite{galaev}) and, for our purpose, we recall some known facts. We have three situations:\begin{enumerate}\item 
     $\mathfrak{h}(\G)$ can be irreducible, i.e., $\mathfrak{h}(\G)$ leaves invariant no proper vector subspace of $\G$. In this case $\mathfrak{h}(\G)=\mathrm{so}(\G)$.
     \item $\mathfrak{h}(\G)$ can be weekly irreducible, i.e., $\mathfrak{h}(\G)$ leaves invariant no proper nondegenerate vector subspace of $\G$ but leaves invariant a degenerate vector subspace $U$. Then the null line $U\cap U^\perp=\R p$ is also invariant. Since $\Ri$ is totally isotropic,  Theorem 7 in \cite{galaev1} shows that $\mathfrak{h}(\G)\subset\mathrm{span}\{ e\wedge f,e\wedge g,f\wedge g, g\wedge h  \}$.  Moreover, the left invariant vector field associated to $p$ is recurrent and hence $\mathrm{L}_up=\theta(u)p$ for any $u\in\G$. 
     \item $\mathfrak{h}(\G)$ can be decomposable, i.e., $\G=\G_1\oplus\G_2$ where the $\G_i$ are nondegenerate invariant by $\mathfrak{h}(\G)$.
     \end{enumerate}

      Before starting the computation,  remark that if    ($A\not=0$ and $B\not=0$) then $\G$ is indecomposable. Indeed, if $E$ is a nondegenerate vector subspace invariant by $\mathfrak{h}(\G)$ then $E$ is invariant by $e\we g$ and $f\we g$ and we can suppose that $\dim E=1$ or $2$. If $\dim E=1$ then $E\subset\{e,g \}^\perp\cap\{f,g\}^\perp=\R g$ which is impossible. A same argument leads to a contradiction when $\dim E=2$.

    \begin{theo}\label{main2}
    Let $(\G,\langle\;,\; \rangle)$ be a four-dimensional semi-symmetric Lorentzian  Lie algebra of type $(S40)$ with $\Ri\neq 0$ and $\dim\mathfrak{h}(\Ku)=2$. Then, there  exists  a basis $(e,f,g,h)$ with the non vanishing products  $\langle e, e \rangle=\langle f, f \rangle=\langle g, h \rangle=1 $ and the non vanishing brackets have one of the following forms:\begin{enumerate}
    \item$ [e,f]=(a-b)g,\;[e,h]=\e\sqrt{ab+\frac12}e+(b+x)f+zg,\;[f,h]=(a-x)e+\e\sqrt{ab+\frac12}f+yg,\;[g,h]=2\e\sqrt{ab+\frac12}g, a\not=b.$ 
          \item $ [e,f]=(a-\frac{2bc-1}{2a})g,\;[e,h]=ce+\frac{2bc-1}{2a}f+zg,\;[f,h]=ae+bf+yg,\;[g,h]=(c+b)g,a-\frac{2bc-1}{2a}\not=0.$ 
               
          \item $[e,h]=ae+xf+ag,\;[f,h]=-xe+af+yg,\;[g,h]=\frac{2a^2+1}{2a}g.$  
             \item $ \;[e,h]=\e\sqrt{\frac{2a^2+1}{2}}e+(a+x)f+zg,\;[f,h]=(a-x)e+
             \e\sqrt{\frac{2a^2+1}{2}}f+yg,\;[g,h]=2\e\sqrt{\frac{2a^2+1}{2}}g.$  
          \item $ [e,h]=ce+(a+\frac{2a^3+a-2abc}{b^2-c^2})f+zg,\;[f,h]=(a-\frac{2a^3+a-2abc}{b^2-c^2})e+bf+yg,\;[g,h]=
            \frac{2a^2+b^2+c^2+1}{b+c}g.$ 
             
              \end{enumerate}     
      In all what above $\e^2=1$. Moreover,  all the models above are not second-order locally symmetric and satisfy $\mathfrak{h}(\Ku)=\mathfrak{h}(\G)$.       
    
    \end{theo}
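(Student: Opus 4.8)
The plan is to combine the curvature normal form from Theorem~\ref{th22} with the structure of the holonomy algebra recalled just above. Fix the basis $(e,f,g,h)$ provided by Theorem~\ref{th22}: $\langle e,e\rangle=\langle f,f\rangle=\langle g,h\rangle=1$, all other inner products zero, the only non-vanishing curvatures are $\Ku(e,h)=Ae\we g$ and $\Ku(f,h)=Bf\we g$, and $A+B=-1$. Since $\dim\mathfrak h(\Ku)=2$ we have $\mathfrak h(\Ku)=\mathrm{span}\{e\we g,f\we g\}$ with $AB\ne0$, so by the remark preceding the theorem $\G$ is indecomposable. The analysis then proceeds in the weakly irreducible case: both curvature operators annihilate $g$ and preserve the null line $\R g$, and since $\Ri$ is isotropic, Theorem~7 of \cite{galaev1} gives $\mathfrak h(\G)\subset\mathrm{span}\{e\we f,e\we g,f\we g,g\we h\}$ while the null direction is recurrent, that is $\mathrm L_ug=\theta(u)g$ for some $\theta\in\G^*$ and all $u\in\G$.

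The recurrence relation fixes the shape of the skew-symmetric operators $\mathrm L_e,\mathrm L_f,\mathrm L_g,\mathrm L_h$: once the $g$-column of each $\mathrm L_u$ is prescribed to be $\theta(u)g$, only a handful of matrix entries remain free in the basis $(e,f,g,h)$. Setting $[u,v]=\mathrm L_uv-\mathrm L_vu$, so that Koszul's formula holds automatically, I would then impose the Jacobi identity for $[\,,\,]$ (which makes $\mathrm L$ its Levi-Civita product) together with the curvature equations $\mathrm L_{[u,v]}-[\mathrm L_u,\mathrm L_v]=\Ku(u,v)$ for all pairs $u,v\in\{e,f,g,h\}$; the holonomy bound $[\mathrm L_u,\mathfrak h(\G)]\subset\mathfrak h(\G)$ is then a consequence. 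The resulting quadratic system, greatly shortened by the reduction above, can be solved. Normalising by the residual freedom that Theorem~\ref{th22} still leaves --- an orthogonal change of the $(e,f)$-plane, which is a full rotation when $A=B=-\tfrac12$ and only signs and the swap $e\leftrightarrow f$ otherwise, together with the reflections $g\mapsto-g$, $h\mapsto-h$ --- and grouping the solutions according to the values of $A,B,\theta$ and to the Jordan type of the part of $\mathrm L_h$ acting on $\mathrm{span}\{e,f\}$, one reaches exactly the five families in the statement.

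Finally one checks the two properties claimed for each model. That $\mathfrak h(\G)=\mathfrak h(\Ku)$ follows from verifying that $\mathrm{span}\{e\we g,f\we g\}$ is already invariant under every $\mathrm L_u$, so that the closure condition defining $\mathfrak h(\G)$ produces nothing new. That the models are not second-order locally symmetric is seen by computing a second covariant derivative $\mathrm L^2_{u,v}(\Ku)$ for a well-chosen pair $(u,v)$ and exhibiting a non-zero value; they are not locally symmetric either, since already $\mathrm L_u(\Ku)\ne0$. The main obstacle is the case analysis of the quadratic system: even after the drastic simplification coming from recurrence, several branches appear, and keeping the discussion exhaustive while reducing each branch to the compact form displayed is delicate --- this is exactly where, as throughout the paper, symbolic computation is indispensable.
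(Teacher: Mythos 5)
Your overall strategy coincides with the paper's only in its outer shell (start from the normal form of Theorem \ref{th22}, parametrize the Levi-Civita operators $\mathrm{L}_e,\mathrm{L}_f,\mathrm{L}_g,\mathrm{L}_h$, and solve the curvature equations $\Ku(u,v)=\mathrm{L}_{[u,v]}-[\mathrm{L}_u,\mathrm{L}_v]$), but the device you use to cut down the unknowns is different from the paper's and, as stated, has a genuine gap. You reduce to the \emph{weakly irreducible} case and then invoke recurrence of the null direction, $\mathrm{L}_u g=\theta(u)g$, to force every $\mathrm{L}_u$ into $\mathrm{span}\{e\we f,e\we g,f\we g,g\we h\}$. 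But the trichotomy recalled before the theorem has three branches, and the remark that $A\not=0$, $B\not=0$ implies indecomposability only eliminates the \emph{decomposable} branch; it does not eliminate the possibility that $\mathfrak{h}(\G)$ is irreducible, i.e.\ $\mathfrak{h}(\G)=\mathrm{so}(\G)$. The hypothesis $\dim\mathfrak{h}(\Ku)=2$ gives no a priori bound on $\mathfrak{h}(\G)$, which is the closure of $\mathfrak{h}(\Ku)$ under all $[\mathrm{L}_u,\cdot\,]$ and can in principle be much larger; only a posteriori, once the models are found, does one see that $\mathfrak{h}(\G)=\mathfrak{h}(\Ku)$ is weakly irreducible. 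Since your entire simplification ("only a handful of matrix entries remain free") rests on the recurrence of $\R g$, which is available only in the weakly irreducible branch, you must either exclude irreducible holonomy by an independent argument or proceed without that assumption. The paper does the latter: it keeps all $24$ parameters, first uses the \emph{differential} Bianchi identity \eqref{2bianchi} to kill $w_1,w_2,u_1,v_2,w$ (using $A\not=0$, $B\not=0$), then exploits the identity \eqref{cu2} with $g.g=0$, namely $[\mathrm{R}_g,\mathrm{L}_g]=\mathrm{R}_g^2$, to get nilpotency of $\mathrm{R}_g$, whence $v_1=u_2=0$, and only then splits into the branches $s\not=0$ (shown to have no real solution) and $s=0$, which yields the five families. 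No holonomy input is needed, and the recurrence you want is obtained as an output, not an input.

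A secondary remark: even granting weak irreducibility, your proposal defers the decisive part --- the explicit resolution of the quadratic system, the elimination of the branch corresponding to $A=B=-\tfrac12$ with $s\not=0$ (where the paper exhibits a quadratic equation with no real root), and the verification that the normalizations produce \emph{exactly} the five displayed families --- to unspecified symbolic computation, so the classification itself is asserted rather than proved. The concluding checks you describe (invariance of $\mathrm{span}\{e\we g,f\we g\}$ under all $\mathrm{L}_u$ to get $\mathfrak{h}(\Ku)=\mathfrak{h}(\G)$, and a nonvanishing second covariant derivative such as $\mathrm{L}^2_{h,h}\Ku(f,h)\not=0$ to rule out second-order local symmetry) do match the paper's.
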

    
    \begin{proof}
   According to Theorem \ref{th22}, there exists a basis
          $(e,f,g,h)$   of $\G$ such that the non vanishing products are
              $\langle e,e\rangle=\langle f,f\rangle=\langle g,h\rangle=1$ and the non vanishing curvatures are 
         $\Ku(e,h)=Ae\we g$, $\Ku(f,h)=Bf\we g$
           with $A+B=-1$, $A\not=0$ and $B\not=0$. Put    
      \[[\Lu_e]=\left(%
    \begin{array}{cccc}
      0 & a &u_1&c \\
      -a & 0 & u_2&l \\
      -c & -l & -k&0 \\
    	-u_1&-u_2&0&k\\
    \end{array}%
    \right),~~[\Lu_f]=\left(%
    \begin{array}{cccc}
      0 & m &v_1&d \\
      -m & 0 & v_2&q \\
      -d & -q & -r&0 \\
    	-v_1&-v_2&0&r\\
    \end{array}%
    \right),\]\[[\Lu_g]=\left(%
    \begin{array}{cccc}
      0 & s &w_1&u \\
      -s & 0 &w_2&z \\
      -u & -z & -w&0 \\
    	-w_1&-w_2&0&w\\
    \end{array}%
    \right),~~[\Lu_h]=\left(%
    \begin{array}{cccc}
      0 & x &p_1&n \\
      -x & 0 & p_2&y \\
      -n & -y & -b&0 \\
    	-p_1&-p_2&0&b\\
    \end{array}%
    \right).\]
    	The differential  Bianchi  identity gives
     \begin{align*}
     0&=\Lu_e(\Ku)(f,g)+\Lu_f(\Ku)(g,e)+\Lu_g(\Ku)(e,f)=w_2Ae\wedge g-w_1Be\wedge f,\\
    		0&=\Lu_e(\Ku)(f,h)+\Lu_f(\Ku)(h,e)+\Lu_h(\Ku)(e,f)=(a(B-A)+
    		(2r+p_2)A)e\wedge g-(m(B-A)+2Bk+p_1B)f\wedge g\\
    		&-(u_1B+v_2A)e\wedge f+(u_2B-v_1A)g\wedge h,\\
    	0&=\Lu_e(\Ku)(g,h)+\Lu_g(\Ku)(h,e)+\Lu_h(\Ku)(e,g)=(2w-u_1) Ae\wedge g+(s(A-B)-u_2B)f\wedge g-w_2Ae\wedge f+w_1Ag\wedge h,\\
    0&=\Lu_f(\Ku)(g,h)+\Lu_g(\Ku)(h,f)+\Lu_h(\Ku)(f,g)=(s(A-B)-v_1A)e\wedge g+(2w-v_2)B f\wedge g-w_1Be\wedge f+w_2Bg\wedge h.
    \end{align*}
    So $w_1=w_2=0$ and
    \begin{equation}\label{bianchi}0=u_2B-v_1A=u_1B+v_2A=(2w-u_1)A=(2w-v_2)B=s(A-B)-v_1A=s(A-B)-u_2B=a(B-A)+(2r+p_2)A=
    m(B-A)+(2k+p_1)B.
    	\end{equation}
Since $A\not=0$ and $B\not=0$ from \eqref{bianchi} we get $u_1=v_2=w=0$. On the other hand, since $g.g=0$, we get from \eqref{cu2}
$[\mathrm{R}_g,\mathrm{L}_g]=\mathrm{R}_g^2$. This implies that $\mathrm{R}_g$ is nilpotent and hence $\mathrm{R}_g^4=0$. Or, $[\mathrm{R}_g]=\left(\begin{array}{cccc}
0&v_1&0&p_1\\u_2&0&0&p_2\\-k&-r&0&-b\\0&0&0&0
\end{array}  \right)$ and  a direct computation shows that $\mathrm{R}_g^4=0$ implies $v_1u_2=0$ and from \eqref{bianchi} we get $v_1=u_2=0$. The relation $[\mathrm{R}_g,\mathrm{L}_g]=\mathrm{R}_g^2$ is equivalent to
\[ sp_1=sp_2=sr=sk=-uk-rz+kp_1+rp_2+up_1+zp_2=0. \]
We have two cases.
\begin{enumerate}\item $s\not=0$. Then $A=B=-\frac12$ and $p_1=p_2=k=r=0$. We consider the equations   	 \begin{equation}\label{eq}\left\{ \begin{array}{l}
    	\Ku(e,f)=\Lu_{[e,f]}-[\Lu_e,\Lu_f]=	a\Lu_e+m\Lu_f+(d-l)\Lu_g-[\Lu_e,\Lu_f]=0,\\
    	\Ku(e,g)=\Lu_{[e,g]}-[\Lu_e,\Lu_g]=s\Lu_f+(u-k)\Lu_g-[\Lu_e,\Lu_g]=0,\\
    	\Ku(e,h)=\Lu_{[e,h]}-[\Lu_e,\Lu_h]=c\Lu_e+(l+x)\Lu_f+n\Lu_g+k\Lu_h-[\Lu_e,\Lu_h]=A(e\we g),\\
    	\Ku(f,g)=\Lu_{[f,g]}-[\Lu_f,\Lu_g]=
    		-s\Lu_e+(z-r)\Lu_g-[\Lu_f,\Lu_g]=0,\\
    	\Ku(f,h)=\Lu_{[f,h]}-[\Lu_f,L_h]=	(d-x)\Lu_e+q\Lu_f+y\Lu_g+r\Lu_h-[\Lu_f,L_h]=B(f\we g),\\
    	\Ku(g,h)=\Lu_{[g,h]}-[\Lu_g,\Lu_h]=
    	u\Lu_e+z\Lu_f+b\Lu_g-[\Lu_g,\Lu_h]=0.\end{array}\right.
    \end{equation}From the second equation $m=-u$. From  the fourth equation we get $z=a$ and from the  sixth equation we get $b=0$. The equations become
            \[ \left\{\begin{array}{ccc}a^2+u^2+sd-sl&=&0,\\ 
            ac-2ul-aq&=&0,\\-uq+2ad+cu&=&0,\\sl-a^2+u^2+sd&=&0,\\-cs+2au+sq&=&0,\\ 
            ac-ul-ux+ns&=&0,\\ c^2+dl+dx+nu+lx-ay-A&=&0,\\
            cl+lq+qx+2an-cx&=&0,\\ad-ax-uq+ys&=&0,\\cd-cx+qd+2yu+qx&=&0,\\
            dl-lx+q^2+ay-dx-nu-B&=&0,\\cu+ad+ax-ys&=&0,\\ ul+aq-ux+ns&=&0. \end{array} \right. \] Then $u^2=-sd$,  $a^2=sl$ and $c-q=2s^{-1}au$.  So
            \[ c^2+q^2+2dl+1=(c-q)^2+2qc+2dl+1=4s^{-2}a^2u^2-2s^{-2}a^2u^2+2qc+1=0. \] Thus $cq=-\frac12-s^{-2}a^2u^2$.  Since $c-q= 2s^{-1}au$ we get that $c$ and $-q$ are solutions of the equation $X^2-2s^{-1}au X+\frac12+s^{-2}a^2u^2=0$ and this equation has no real solution. In conclusion the case $s\not=0$ is impossible.
            \item $s=0$.     From the first equation in \eqref{eq} we get $a^2+m^2=0$ and hence $a=m=0$. From the second equation, we get $u=0$, from the third equation we get $p_1=0$,  from the fourth equation we get $z=0$ and from the fifth equation we get $p_2=0$. 
           Then \eqref{eq} is now equivalent to
    	\begin{align*}
     Bk=kx=Ar=rx&= 0,\\
     -cr+kd=-lr+kq &= 0,\\
    r(c+b+q)=k(c+b+q)&= 0,\\
     d(c+q-b)+2rn+(q-c)x&= 0,\\
     l(c+q-b)+2ky+(q-c)x&= 0,\\
     dl-lx+q^2+2ry-qb-dx-B&= 0,\\
    c^2+dl+dx+2kn-cb+lx-A&= 0.
    \end{align*}  
        Then $k=r=0$ and
    \[ \left\{ \begin{array}{ccc}d(c+q-b)+(q-c)x&=& 0,\\
    l(c+q-b)+(q-c)x&=& 0,\\dl-lx+q^2-qb-dx-B&=& 0,\\
        c^2+dl+dx-cb+lx-A&=& 0.\end{array}\right. \]
        This is equivalent to
    \[d(c+q-b)+(q-c)x=0,\;(d-l)(c+q-b)=0,\;c^2+q^2+2dl+1=b(q+c)\esp B=dl-lx+q^2-qb-dx.   \]    
   If $d\not=l$  then $b=c+q$,  $2(dl-qc)=-1$ and $(q-c)x=0$. Since $d$ and $l$ play symmetric roles we can suppose $d\not=0$. We get two types of Lie algebras   
   \[ [e,f]=(d-l)g,\;[e,h]=\e\sqrt{dl+\frac12}e+(l+x)f+ng,\;[f,h]=(d-x)e+\e\sqrt{dl+\frac12}f+yg,\;[g,h]=2\e\sqrt{dl+\frac12}g, d\not=l \] or
   \[ [e,f]=(d-\frac{2qc-1}{2d})g,\;[e,h]=ce+\frac{2qc-1}{2d}f+ng,\;[f,h]=de+qf+yg,\;[g,h]=(c+q)g,d-\frac{2qc-1}{2d}\not=0. \] 
   If $d=l$ then $b(q+c)=c^2+q^2+2l^2+1$ and hence $b+c\not=0$. If $q=c$ then we have two types of Lie algebras
   \[ \;[e,h]=ce+xf+ng,\;[f,h]=-xe+cf+yg,\;[g,h]=\frac{2c^2+1}{2c}g, \]
   \[ \;[e,h]=\e\sqrt{\frac{2l^2+1}{2}}e+(l+x)f+ng,\;[f,h]=(l-x)e+\e\sqrt{\frac{2l^2+1}{2}}f+yg,\;[g,h]=2\e\sqrt{\frac{2l^2+1}{2}}g. \]
    If $q\not=c$ then $b=\frac{2l^2+q^2+c^2+1}{q+c}$ and $x=\frac{2l^3+l-2lqc}{q^2-c^2}$. So     
  \[ [e,h]=ce+(l+\frac{2l^3+l-2lqc}{q^2-c^2})f+ng,\;[f,h]=(d-\frac{2l^3+l-2lqc}{q^2-c^2})e+qf+yg,\;[g,h]=
  \frac{2l^2+q^2+c^2+1}{q+c}g. \]
  For all these models we have $\mathrm{L}_{h,h}\Ku(f,h)\not=0$ which shows that there are not second-order locally symmetric. Moreover, $\mathfrak{h}(\Ku)$ is invariant by $\mathrm{L}$ which shows that $\mathfrak{h}(\Ku)=\mathfrak{h}(\G)$.\qedhere

\end{enumerate}
\end{proof}

\begin{theo} Let $(\G,\langle\;,\; \rangle)$ be a four-dimensional semi-symmetric indecomposable Lorentzian  Lie algebra of type $(S40)$ with $\Ri\neq 0$ and $\dim\mathfrak{h}(\Ku)=1$.  Then, there  exists  a basis $(e,f,g,h)$ with the non vanishing products  $\langle e, e \rangle=\langle f, f \rangle=\langle g, h \rangle=1 $ and the non vanishing brackets are
\[ [e,f]=\frac{2a^2+1}{2a}g,\;[e,h]=\frac{1}{2a(2a^2-1)}f+xg,\;[f,h]=
 \frac{2a(a^2-1)}{2a^2-1}e+yg. \]
 Moreover, $\mathfrak{h}(\G)=\mathrm{span}\{e\wedge g,f\wedge g  \}$ and $\G$ is not second-order locally symmetric.

\end{theo}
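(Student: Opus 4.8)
The plan is to mirror the proof of Theorem~\ref{main2}: read the curvature off Theorem~\ref{th22}, cut down the Levi-Civita unknowns by a holonomy argument, solve the resulting polynomial system, and verify the two supplementary claims. \emph{First, the curvature:} by Theorem~\ref{th22} there is a basis $(e,f,g,h)$ with $\langle e,e\rangle=\langle f,f\rangle=\langle g,h\rangle=1$, all other products zero, such that $\Ku(e,h)=A\,e\we g$, $\Ku(f,h)=B\,f\we g$, all the remaining $\Ku(\cdot,\cdot)=0$, and $A+B=-1$. Since $e\we g$ and $f\we g$ are linearly independent in $\mathrm{so}(V)$, the space $\mathfrak{h}(\Ku)=\mathrm{span}\{A\,e\we g,B\,f\we g\}$ has dimension equal to the number of non-zero elements among $A,B$; as $A+B\neq0$, the hypothesis $\dim\mathfrak{h}(\Ku)=1$ forces exactly one of them to vanish, and after exchanging $e$ and $f$ if necessary we may take $B=0$, $A=-1$. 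Hence $\Ku(e,h)=-e\we g$ is the only non-vanishing curvature and $\mathfrak{h}(\Ku)=\R\,e\we g$.

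\emph{Holonomy reduction:} since $\G$ is indecomposable, the facts on Lorentzian holonomy recalled before Theorem~\ref{main2} apply. The holonomy $\mathfrak{h}(\G)$ cannot be irreducible (an irreducible $\mathfrak{h}(\G)=\mathrm{so}(V)$ is incompatible with $\Ri\neq0$ being nilpotent), so it is weakly irreducible and preserves the null line $\mathrm{Im}(\Ri)=\R g$. As $\Ri$ is totally isotropic, Theorem~7 of \cite{galaev1} gives $\mathfrak{h}(\G)\subset\mathrm{span}\{e\we f,e\we g,f\we g,g\we h\}$ and makes the left invariant field associated with $g$ recurrent: $\Lu_u g=\theta(u)g$ for every $u\in\G$. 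In particular each $\Lu_u$, $u\in\{e,f,g,h\}$, lies in the four-dimensional parabolic $\mathfrak{p}=\mathrm{span}\{e\we f,e\we g,f\we g,g\we h\}$ stabilizing $\R g$; writing $\Lu_u=\alpha_u\,e\we f+\beta_u\,e\we g+\gamma_u\,f\we g+\delta_u\,g\we h$ turns the Levi-Civita product into sixteen scalar unknowns.

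\emph{Solving:} with $[u,v]=\Lu_uv-\Lu_vu$, impose the differential Bianchi identity \eqref{2bianchi} for the curvature found above together with the six equations $\Ku(u,v)=\Lu_{[u,v]}-[\Lu_u,\Lu_v]$ over the pairs $(e,f),(e,g),(e,h),(f,g),(f,h),(g,h)$, computing all brackets of $\mathrm{so}(V)$ via \eqref{der}. The vanishing equations $\Ku(e,f)=\Ku(e,g)=\Ku(f,g)=\Ku(g,h)=0$ and the Bianchi identity are essentially linear and annihilate nearly all coefficients: one finds $\Lu_g=0$ and all $\delta_u=0$, so that $g$ is in fact parallel. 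The remaining equation $\Ku(e,h)=-e\we g$ then fixes a scale, leaving one essential parameter $a$ --- necessarily $a\neq0$ and $2a^2\neq1$, since otherwise $\Ri=0$ or $\dim\mathfrak{h}(\Ku)\neq1$ --- and two parameters $x,y$ that do not enter the curvature, the $g$-components of $[e,h]$ and $[f,h]$. Reading the brackets off from $[u,v]=\Lu_uv-\Lu_vu$ yields the stated normal form.

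\emph{Supplementary claims and main obstacle:} computing $[\Lu_u,e\we g]$ for $u\in\{e,f,g,h\}$ shows that the smallest $\Lu$-invariant subalgebra of $\mathrm{so}(V)$ containing $\mathfrak{h}(\Ku)=\R\,e\we g$ is $\mathrm{span}\{e\we g,f\we g\}$, whence $\mathfrak{h}(\G)=\mathrm{span}\{e\we g,f\we g\}$; and a single second covariant derivative, for instance $\Lu_h(\Lu_h(\Ku))(e,h)$, is a non-zero multiple of $e\we g$, so $\na^2\Ku\neq0$ and $\G$ is not second-order locally symmetric. The hardest step is the solving: even after the reduction to $\mathfrak{p}$, the six curvature equations form a quadratic system in several unknowns, and one must check carefully that the \emph{indecomposable} hypothesis is exactly what discards the spurious branches --- those in which $\G$ splits off a nondegenerate factor --- leaving a single family. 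The holonomy reduction, based on Galaev's classification of weakly irreducible Lorentzian holonomy algebras, is the decisive simplification that makes the computation finite.
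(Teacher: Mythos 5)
Your setup is right where it overlaps the paper: the normal form of Theorem \ref{th22} with exactly one of $A,B$ nonzero (since $A+B=-1$ and $\dim\mathfrak{h}(\Ku)=1$), the use of indecomposability to discard the branch where the holonomy fixes a nondegenerate line, and the verification of the two supplementary claims are all sound in outline. But the proposal has two genuine gaps. First, the decisive step is not done: the whole content of the theorem is the quantitative output of the elimination (that the only surviving family has exactly the brackets $[e,f]=\frac{2a^2+1}{2a}g$, $[e,h]=\frac{1}{2a(2a^2-1)}f+xg$, $[f,h]=\frac{2a(a^2-1)}{2a^2-1}e+yg$), and you assert the outcome of the "solving" rather than carry it out. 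Your description of it is also inaccurate: the equations are not "essentially linear" and do not annihilate the $g\we h$-components up front. In the paper's computation the vanishing of $\Lu_g$ and of the $g\we h$-coefficients emerges only at the end of a quadratic branch analysis: one must first rule out a branch (the analogue of $v_1\neq0$) because it forces the nonzero curvature coefficient to vanish, then use indecomposability to get $x\neq0$, which kills $r$, and finally show that the sub-branch $q\neq b$ collapses back to $x=0$, leaving the single family with $l=-\frac1{2a}$, $x=\frac{a}{2a^2-1}$. The parallelism of $g$ is a conclusion of this analysis, not an input, so the step you wave through is precisely where the theorem lives.

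Second, your simplification rests on putting every $\Lu_u$ in the parabolic $\mathrm{span}\{e\we f,e\we g,f\we g,g\we h\}$ via recurrence of the null line. That reduction is only available in the weakly irreducible case, and your exclusion of irreducible holonomy ("incompatible with $\Ri\neq0$ being nilpotent") is an assertion, not an argument: neither the paper nor any fact you cite rules out full $\mathrm{so}(\G)$ holonomy from the pointwise algebraic type of the Ricci operator, and $\mathfrak{h}(\G)$ is only generated by the one-dimensional $\mathfrak{h}(\Ku)$ together with brackets by the unknown $\Lu_u$, so a priori it could be everything. (The identification of the invariant null line with $\R g$ also needs the small check that $\R g$ is the only null line stabilized by $e\we g$.) The paper avoids this issue entirely: it parametrizes $\Lu_e,\Lu_f,\Lu_g,\Lu_h$ with all of $\mathrm{so}(\G)$ and obtains the reductions from the differential Bianchi identity \eqref{2bianchi} together with the identities $[\mathrm{R}_g,\Lu_g]=\mathrm{R}_g^2$ and the analogous relation for $\mathrm{R}_e$ coming from \eqref{cu2}, exactly as in the proof of Theorem \ref{main2}. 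So either supply a proof that the holonomy cannot be irreducible, or drop the reduction and run the full computation; as written, the argument is a plausible plan rather than a proof.
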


   \begin{proof} We proceed as in the proof of Theorem \ref{main2} and we
   suppose  that $A=0$. Then \eqref{bianchi} is equivalent to $u_1=u_2=s=a=0$, $v_2=2w$ and $m=-2k-p_1$. We have
 \[ [\mathrm{R}_e]=\left(\begin{array}{cccc}
 0&0&0&0\\0&-m&0&-x\\-c&-d&-u&-n\\0&-v_1&0&-p_1
 \end{array}  \right)\esp [\mathrm{R}_g]=\left(\begin{array}{cccc}
 0&v_1&0&p_1\\0&2w&0&p_2\\-k&-r&-w&-b\\0&0&0&0
 \end{array}  \right) \]  
  From \eqref{cu2}, we get $[\mathrm{R}_g,\mathrm{L}_g]-\mathrm{R}_g^2-w\mathrm{R}_g=0$ which is equivalent to
  \begin{equation}\label{eq11} w=v_1(z-p_2)=v_1(u+k)=-uk-rz+kp_1+rp_2+up_1+zp_2=0. \end{equation} 
   We have also $[\mathrm{R}_e,\mathrm{L}_e]-\mathrm{R}_e^2-w\mathrm{R}_e=0$ which is equivalent to
   \begin{eqnarray}\label{eq10} &&cv_1=cp_1=-lv_1-m^2+v_1x+cm=-lm-kx+lp_1-xm-p_1x+cx=0,\nonumber\\
  && -ck+c^2=lu-lm-kd-md-du+v_1n+cd=-u^2+uc=-c^2-ld-2kn-lx-xd-nu-p_1n+cn=0,\nonumber\\
   &&(-k+m+p_1-c)v_1=lv_1+v_1x-p_1^2+cp_1=0. \end{eqnarray}
   If $v_1\not=0$ we get from \eqref{eq11} and \eqref{eq10}  $c=u=k=z=p_2=0$ and $m=-p_1$ and \eqref{eq10} becomes
     \begin{eqnarray*} -lv_1-m^2+v_1x=-2lm
       =lu-lm-md+v_1n=-ld-lx-xd-nu+mn=
        lv_1+v_1x-m^2=0. \end{eqnarray*}
       This implies that $l=0$. We return to \eqref{eq} and we find that the first equation implies $m=x=y=n=b=0$ and from the fifth equation we deduce that $q=0$ and $B=0$ which is impossible.

 Thus $v_1=0$ and hence $p_1=0$. From the first equation in \eqref{eq} we get $m=0$, from the second equation we get $u=0$, from the fourth equation we get $z=0$, from the fifth equation we get $p_2=0$. From $m=-2k-p_1$ we get $k=0$ and since $c^2=ck$ we deduce that $c=0$. Thus 
 \[ \mathrm{L}_e=l f\wedge g,\; \mathrm{L}_f=de\wedge g+qf\wedge g-rg\wedge h,\; \mathrm{L}_g=0\esp \mathrm{L}_h=xe\wedge f+ne\wedge g+yf\wedge g-bg\wedge h,   \]
 and \eqref{eq} is now equivalent to
   \[ xr=lr=ld+xd+lx=lq+xq-lb=qd+2rn+xq-db=ld+q^2+2ry-lx-xd-qb-B=r(q+b)=0. \]
 If $x=0$ then one can check easily that $\mathfrak{h}(\Ku)=\mathrm{span}\{f\wedge g \}$ is invariant by $\mathrm{L}$ and hence $\mathfrak{h}(\Ku)=\mathfrak{h}(\G)$ which leaves invariant $\R e$ and hence it is decomposable. Suppose that $x\not=0$. Then
 \[ r=0,ld+xd+lx=lq+xq-lb=qd+xq-db=ld+q^2-lx-xd-qb+1=0. \]
 Since  $xq=lb-lq=db-qd$ we get $(l-d)(q-b)=0$. If $q=b$ then $q=b=0$ 	and hence
 \[ r=0,ld+xd+lx=2ld+1=0. \]
 So $x=\frac{d}{2d^2-1}$ and $l=-\frac1{2d}$. In this case the Lie brackets are
 \[ [e,f]=\frac{2d^2+1}{2d}g,\;[e,h]=\frac{1}{2d(2d^2-1)}f+ng,\;[f,h]=
 \frac{2d(d^2-1)}{2d^2-1}e+yg. \]
   If $q\not=b$ then $l=d$ and
   \[ r=0,\; d=l=\frac{xq}{b-q},l^2+2lx=2l^2+q^2-qb+1=0. \]
   So
   \[ r=0,d=l=\frac{xq}{b-q}, x=-\frac{l}2, b=\frac{2l^2+q^2+1}q. \]
   This implies that $l(1+\frac{q^2}{2l^2+1})=0$ and hence $x=0$. The semi-symmetric Lie algebras obtained satisfy $\mathrm{L}_{h,h}\Ku(f,h)\not=0$ and hence are not second-order locally symmetric.
    \end{proof} 
   
   \begin{pr}\label{pr40} Let $(\G,\langle\;,\; \rangle)$ be a four-dimensional semi-symmetric Lorentzian decomposable Lie algebra of type $(S40)$ with $\Ri\neq 0$. Then $\G$ is a product of $\R$ with a Lie algebra of type 
   $(S30)$.
          
   \end{pr}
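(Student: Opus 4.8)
The plan is to reduce the proposition to the Levi-Civita computation already carried out for the indecomposable case and then read off a direct-product splitting by hand. Since $\G$ is of type $(S40)$ with $\Ri\neq0$, its curvature is Ricci isotropic, so Theorem \ref{th22} furnishes a basis $(e,f,g,h)$ with $\langle e,e\rangle=\langle f,f\rangle=\langle g,h\rangle=1$, $\Ri(h)=g$, $\Ri(e)=\Ri(f)=\Ri(g)=0$, for which the only non-vanishing curvatures are $\Ku(e,h)=Ae\wedge g$ and $\Ku(f,h)=Bf\wedge g$ with $A+B=-1$. In particular $A$ and $B$ are not both zero; by the remark preceding Theorem \ref{main2}, if they were both nonzero then $\G$ would be indecomposable, so decomposability of $\G$ forces exactly one of $A,B$ to vanish, and after exchanging $e$ and $f$ we may assume $A=0$, $B=-1$. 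Then $\mathfrak{h}(\Ku)=\mathrm{span}\{f\wedge g\}$ and $\Ku(e,\cdot)=0$.

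Next I would invoke the Levi-Civita computation in the proof of the preceding theorem, which starts precisely from this situation ($A=0$): it gives $\mathrm{L}_e=lf\wedge g$, $\mathrm{L}_f=de\wedge g+qf\wedge g-rg\wedge h$, $\mathrm{L}_g=0$ and $\mathrm{L}_h=xe\wedge f+ne\wedge g+yf\wedge g-bg\wedge h$, and shows that the decomposable ones are precisely those with $x=0$. Setting $x=0$ and $B=-1$, the surviving relations read $lr=ld=l(q-b)=0$, $qd+2rn-db=0$, $q^2+2ry-qb+1=0$ and $r(q+b)=0$. The first thing to observe is that necessarily $l=0$: if $l\neq0$ these relations give $r=d=0$ and $q=b$, and then $q^2+2ry-qb+1=0$ collapses to $1=0$.

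With $l=0$ the brackets become $[e,f]=dg$, $[e,g]=0$, $[e,h]=ng$, $[f,g]=-rg$, $[f,h]=de+qf+yg+rh$, $[g,h]=bg$, and I would then exhibit a spacelike central vector $z=e-cg$, central iff $d=cr$ and $n=cb$. When $r\neq0$ the relations give $q=-b$ and $rn=bd$, so $c=d/r$ works; when $r=0$ they force $d=0$, $q\neq0$ and $b=(q^2+1)/q\neq0$, so $c=n/b$ works. In both cases $z$ is central with $\langle z,z\rangle=1$, and $z^\perp=\mathrm{span}\{f,g,ce+h\}$ is closed under the bracket; hence $\R z$ and $z^\perp$ are complementary ideals, so $\G=\R z\oplus z^\perp$ is an orthogonal direct product of Lie algebras with $\R z\cong\R$.

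It remains to identify $z^\perp$. It is a three-dimensional Lorentzian Lie algebra, semi-symmetric because the curvature of a direct product of pseudo-Euclidean Lie algebras is block-diagonal; moreover the Ricci operator of $\G$ vanishes on the $\R z$ factor and restricts to that of $z^\perp$ on the other (one computes $\Ri(f)=\Ri(g)=0$ and $\Ri(ce+h)=g$), so $\Ri_{z^\perp}\neq0$ and $\Ri_{z^\perp}^2=0$, which is exactly type $(S30)$. Thus $\G=\R\times z^\perp$ with $z^\perp$ of type $(S30)$. The only genuine labour is importing the bracket computation from the preceding proof and the short case distinction $r\neq0$ versus $r=0$ needed to locate the central line; once that line is found, recognising the product and the type of the three-dimensional factor is immediate.
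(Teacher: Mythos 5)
Your argument is correct, but it follows a genuinely different route from the paper. The paper exploits decomposability at the outset: a nondegenerate $\mathfrak{h}(\G)$-invariant subspace is shown to be a line spanned by $e+\al g$, a change of basis makes it $\R e$, the corresponding left-invariant field is parallel (so $\mathrm{R}_e=0$), the orthogonal complement $\mathrm{span}\{f,g,h\}$ is then a three-dimensional algebra handled by Proposition \ref{pr30}, and finally $\mathrm{L}_e=0$ is forced by $\Ku(e,f)=\Ku(e,h)=0$. You instead use decomposability only through the remark that it forces $AB=0$, then re-run the four-dimensional Levi-Civita computation of the preceding theorem with $A=0$ and keep the branch $x=0$, solve the reduced quadratic system (your reduction, the exclusion of $l\not=0$, and the two cases $r\not=0$, $r=0$ all check out against the displayed system), and exhibit the central spacelike vector $z=e-cg$ with $z^\perp=\mathrm{span}\{f,g,ce+h\}$ an ideal of type $(S30)$; your $z$ is precisely the paper's normalized generator $e+\al g$, found a posteriori rather than a priori. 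What your route buys is that it avoids the holonomy/parallel-vector-field step ($\mathrm{R}_e=0$) and Proposition \ref{pr30}, and it displays explicitly how the decomposable solutions sit inside the full solution set; what it costs is dependence on the preceding computation, so two points should be said explicitly. First, that computation is reusable because indecomposability enters it only at the final dichotomy $x=0$ versus $x\not=0$ (everything up to the normal form of the $\mathrm{L}$'s and the quadratic system uses only $A=0$, $B=-1$); this is true but worth stating. Second, you assert that "the decomposable ones are precisely those with $x=0$", whereas the paper only proves the implication $x=0\Rightarrow$ decomposable; the direction you actually need (decomposable $\Rightarrow x=0$) requires the one-line observation that in the $x\not=0$ branch the holonomy is $\mathrm{span}\{e\wedge g,f\wedge g\}$, whose invariant proper subspaces all contain the isotropic vector $g$ and are degenerate, so those algebras are indecomposable. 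With that sentence added, your proof is complete.
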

   
   \begin{proof} In this case $A=0$ or $B=0$. We suppose $A=0$ and we consider the basis $(e,f,g,h)$ where $\Ku(f,h)=-f\we g$ and $\Ku(e,.)=\Ku(g,.)=0$. Let $E$ be a nondegenerate vector subspace of $\G$ invariant by the holonomy Lie algebra. We can suppose that $\dim E=1$ or $\dim E=2$. If $\dim E=2$ and since $E$ must be invariant by $f\wedge g$ then $E\subset\mathrm{span}\{f,g  \}$ or $E\subset\mathrm{span}\{f,g  \}^\perp$ which is impossible so $\dim E=1$. Let $u$ be a generator of $E$. Since $f\wedge g(u)=0$ then $u\in\mathrm{span}\{e,g  \}$. So $u=e+\al g$. By making the change of basis $(e,f,g,h)$ into $(e+\al g,f,g,h-\al e)$ we can suppose $u=e$. Then the left invariant vector field associated to $e$ must be parallel and hence $\mathrm{R}_e=0$. Hence $\mathrm{span}\{f,g,h \}$ is a semi-symmetric Lie algebra of dimension 3 with isotropic Ricci curvature. According to Proposition \ref{pr30} and its proof
   $\mathrm{L}_f=af\wedge g+cg\wedge h$,  $\mathrm{L}_g=0$ and $\mathrm{L}_h=pf\wedge g+rg\wedge h$ with $(c=0, a^2+ar+1=0)$ or $(c\not=0,p=\frac{2r^2+1}{2c})$. Put $\mathrm{L}_e=xf\wedge g+yf\wedge h+zg\wedge h$. The relation $\Ku(e,f)=0$ is equivalent to
      \[ y(pf\wedge g+rg\wedge h)+xcg\wedge f+yag\wedge h+ycf\wedge h+zaf\wedge g=0. \]If $c=0$ then
   $ y(r+a)=yp+za=0.$
   Since $a\not=-r$ we get $y=z=0$. On the other hand, the relation $\Ku(e,h)=0$ gives
$ xaf\wedge g+xrf\wedge g=0$ and hence $x=0$.
 
 If $c\not=0$ then $y=0$ and $xc=za$. The relation $\Ku(e,h)=0$ gives
\[ x(af\wedge g+cg\wedge h)-xrg\wedge f-zpf\wedge g=0. \] So $x=0$ and $z=0$. Thus $\mathrm{L}_e=0$ which completes the proof.           
            \end{proof}
   
   \section{Four-dimensional Ricci flat or  Ricci isotropic homogeneous semi-symmetric Lorentzian manifolds } \label{section6}
   
   We use Komrakov's classification \cite{komrakov} of four-dimensional homogeneous pseudo-Riemannian manifolds and we apply the following algorithm to find among Komrakov's list the pairs $(\bar{\G},\G)$ corresponding to    four-dimensional Ricci flat or  Ricci isotropic homogeneous semi-symmetric Lorentzian manifolds.

   Let $M=\bar{G}/G$ be an homogeneous manifold with $G$ connected and $\bar{\G}=\G\oplus\mathfrak{m}$, where $\bar{\G}$ is the Lie algebra of $\bar{G}$, $\G$ the Lie algebra of $G$ and $\mathfrak{m}$  an arbitrary complementary of $\G$ (not necessary $\G$-invariant). The pair $(\bar{\G},\G)$ uniquely defines the isotropy representation $\rho:\G\too\mathrm{gl}(\mathfrak{m})$ by $\rho(x)(y)=[x,y]_\mathfrak{m}$, for all $x\in\G$, $y\in\mathfrak{m}$. Let $\{e_1,\ldots,e_r,u_1,\ldots,u_n \}$ be a basis of $\bar{\G}$ where $\{e_i\}$ and $\{u_j \}$ are bases of $\G$ and $\mathfrak{m}$, respectively. The algorithm goes as follows.
   
   \begin{enumerate}\item Determination of invariant pseudo-Riemannian metrics on $M$. It is well-known that invariant pseudo-Riemannian metrics on $M$ are in a one-to-one correspondence with nondegenerate invariant symmetric bilinear forms on $\mathfrak{m}$. A symmetric bilinear form on $\mathfrak{m}$ is determined by its matrix $B$ in $\{u_i\}$ and its invariant if $\rho(e_i)^t\circ B+B\circ\rho(e_i)=0$ for $i=1,\ldots,r$.
   \item Determination of the Levi-Civita connection. Let $B$ be a nondegenerate invariant symmetric bilinear forms on $\mathfrak{m}$. It defines uniquely an invariant linear Levi-Civita connection $\na:\bar{\G}\too\mathrm{gl}(\mathfrak{m})$ given by
   \[ \na(x)=\rho(x),\;\na(y)(z)=\frac12[y,z]_\mathfrak{m}+\nu(y,z),\; x\in\G, y,z\in\mathfrak{m}, \] where $\nu:\mathfrak{m}\times\mathfrak{m}\too\mathfrak{m}$ is given by the formula
   \[ 2B(\nu(a,b),c)=B([c,a]_\mathfrak{m},b)+B([c,b]_\mathfrak{m},a),\;a,b,c\in\mathfrak{m}. \]
   \item Determination of the curvature. The curvature of $B$ is the bilinear map
   $\Ku:\mathfrak{m}\times\mathfrak{m}\too\mathrm{gl}(\mathfrak{m})$ given by
   \[ \Ku(a,b)=[\na(a),\na(b)]-\na([a,b]_\mathfrak{m})-\rho([a,b]_{\G}),\; a,b\in\mathfrak{m}. \]
   \item Determination of the Ricci curvature. It is given by its matrix in $\{ u_i\}$, i.e., $\ric=(\ric_{ij})_{1\leq i,j\leq n}$ where
   \[ \ric_{ij}=\sum_{r=1}^n\Ku_{ri}(u_r,u_j). \]
   \item Determination of the Ricci operator. We have $\Ri=B^{-1}\ric$.
   \item Checking  the semi-symmetry condition. 
   
  \end{enumerate}  
   \begin{theo}\label{main3} Let $M=\bar{G}/G$ be four-dimensional Ricci isotropic homogeneous semi-symmetric Lorentzian manifold. Then $M$ is isometric to one of the following models:
   \begin{enumerate}\item ${\mathbf{1.1}^2}:1$, $\bar{\G}=\mathrm{span}\{e_1,u_1,u_2,u_3,u_4 \}$ with $B_0=\left(
   \begin{array}{cccc}a&0&0&0\\0&0&0&b\\0&0&a&0\\0&b&0&d\end{array}  \right)$ $(ab\not=0)$ and
   \[ [e_1,u_1]=u_3,[e_1,u_3]=-u_1,[u_1,u_3]=-u_2,[u_1,u_4]=u_1,[u_2,u_4]=2u_2,[u_3,u_4]=u_3, \]
   \item ${\mathbf{1.1}^2}:2$, $\bar{\G}=\mathrm{span}\{e_1,u_1,u_2,u_3,u_4 \}$ with $B_0=$  and
      \[ [e_1,u_1]=u_3,[e_1,u_3]=-u_1,[u_1,u_4]=u_1,[u_2,u_4]=pu_2,[u_3,u_4]=u_3,
      \quad p\not=1. \]
   \item ${\mathbf{1.1}^2}:5$, $\bar{\G}=\mathrm{span}\{e_1,u_1,u_2,u_3,u_4 \}$ with $B_0$ and
         \[ [e_1,u_1]=u_3,[e_1,u_3]=-u_1,[u_1,u_3]=u_2. \]
   \item ${\mathbf{1.4}^1}:2$, $\bar{\G}=\mathrm{span}\{e_1,u_1,u_2,u_3,u_4 \}$ with $B_1=\left(
         \begin{array}{cccc}0&0&-a&0\\0&a&0&0\\-a&0&b&d\\0&0&d&c\end{array}  \right)$ $(ac\not=0)$ and
         \[ [e_1,u_2]=u_1,[e_1,u_3]=u_2,[e_1,u_4]=e_1,[u_1,u_4]=u_1,[u_3,u_4]=-u_3. \]
   \item ${\mathbf{1.4}^1}:9$, $\bar{\G}=\mathrm{span}\{e_1,u_1,u_2,u_3,u_4 \}$ with $B_1$ and
            \[ [e_1,u_2]=u_1,[e_1,u_3]=u_2,[u_1,u_3]=u_1,[u_2,u_3]=re_1+u_2+u_4,[u_3,u_4]=pu_4\quad c+2ra+2ap^2+2pa\not=0. \]
    \item ${\mathbf{1.4}^1}:10$, $\bar{\G}=\mathrm{span}\{e_1,u_1,u_2,u_3,u_4 \}$ with $B_1$ and
                \[ [e_1,u_2]=u_1,[e_1,u_3]=u_2,[u_1,u_3]=u_1,[u_2,u_3]=re_1+u_2,[u_3,u_4]=pu_4\quad r+p^2+p\not=0. \]
      \item ${\mathbf{1.4}^1}:11$, $\bar{\G}=\mathrm{span}\{e_1,u_1,u_2,u_3,u_4 \}$ with $B_1$ and
                  \[ [e_1,u_2]=u_1,[e_1,u_3]=u_2,[u_1,u_3]=u_1,[u_2,u_3]=re_1+u_2+u_4,[u_3,u_4]=u_1-u_4\quad c+2ra\not=0. \]
                            
        \item ${\mathbf{1.4}^1}:12$, $\bar{\G}=\mathrm{span}\{e_1,u_1,u_2,u_3,u_4 \}$ with $B_1$ and
                          \[ [e_1,u_2]=u_1,[e_1,u_3]=u_2,[u_1,u_3]=u_1,[u_2,u_3]=re_1+u_2,[u_3,u_4]=u_1-u_4\quad r\not=0. \]
         \item ${\mathbf{1.4}^1}:13$, $\bar{\G}=\mathrm{span}\{e_1,u_1,u_2,u_3,u_4 \}$ with $B_1$ and
                           \[ [e_1,u_2]=u_1,[e_1,u_3]=u_2,[u_2,u_3]=re_1+u_4,[u_3,u_4]=u_4\quad c+2ra+2a\not=0. \]
                                            
       \item ${\mathbf{1.4}^1}:14$, $\bar{\G}=\mathrm{span}\{e_1,u_1,u_2,u_3,u_4 \}$ with $B_1$ and
                                  \[ [e_1,u_2]=u_1,[e_1,u_3]=u_2,[u_2,u_3]=re_1,[u_3,u_4]=u_4\quad r+1\not=0. \]
                                                                              
 \item ${\mathbf{1.4}^1}:15,16\;\mbox{and}\;  17$, $\bar{\G}=\mathrm{span}\{e_1,u_1,u_2,u_3,u_4 \}$ with $B_1$ and
                            \[ [e_1,u_2]=u_1,[e_1,u_3]=u_2,[u_2,u_3]=\e e_1+u_4,[u_3,u_4]=u_1\quad  c+2\e a\not=0,\;\e=0,1,-1. \]
    \item ${\mathbf{1.4}^1}:18,19\;\mbox{and}\;  20$, $\bar{\G}=\mathrm{span}\{e_1,u_1,u_2,u_3,u_4 \}$ with $B_1$ and
                               \[ [e_1,u_2]=u_1,[e_1,u_3]=u_2,[u_2,u_3]=\e e_1+u_4,\quad  c+2\e a\not=0,\;\e=0,1,-1. \]
     \item ${\mathbf{1.4}^1}:21\;\mbox{and}\;  22$, $\bar{\G}=\mathrm{span}\{e_1,u_1,u_2,u_3,u_4 \}$ with $B_1$ and
                                    \[ [e_1,u_2]=u_1,[e_1,u_3]=u_2,[u_2,u_3]= \e e_1,[u_3,u_4]=u_1 ,\;\e=1,-1. \]
      \item ${\mathbf{1.4}^1}:24\;\mbox{and}\;  25$, $\bar{\G}=\mathrm{span}\{e_1,u_1,u_2,u_3,u_4 \}$ with $B_1$ and
                                          \[ [e_1,u_2]=u_1,[e_1,u_3]=u_2,[u_2,u_3]= \e e_1 ,\;\e=1,-1. \]
     \item ${\mathbf{2.5}^2}:2$, $\bar{\G}=\mathrm{span}\{e_1,e_2,u_1,u_2,u_3,u_4 \}$ with $B_2=\left(
                                               \begin{array}{cccc}0&0&a&0\\0&a&0&0\\a&0&b&0\\0&0&0&a\end{array}  \right)$ $(a\not=0)$ and
  \[[e_1,u_2]=u_1,[e_1,u_3]=-u_2,[e_2,u_3]=u_4,[e_2,u_4]=-u_1,[u_1,u_3]= u_1,[u_2,u_3]=A,[u_2,u_4]=2ru_1,[u_3,u_4]=B, \]
  where $A=(p+s)e_1+re_2+u_2-2ru_4$, $B=-re_1+(p-s)e_2-2ru_2-u_4$, $r\geq0,s\geq0$ and $p+r^2\not=0$.
 \item ${\mathbf{2.5}^2}:3$, $\bar{\G}=\mathrm{span}\{e_1,e_2,u_1,u_2,u_3,u_4 \}$ with $B_2$ and
  \[[e_1,u_2]=u_1,[e_1,u_3]=-u_2,[e_2,u_3]=u_4,[e_2,u_4]=-u_1,
  [u_2,u_3]=-(r+s)e_1-u_4,
  [u_2,u_4]=u_1,[u_3,u_4]=(s-r)e_2-u_2, \]
  with $4r-1\not=0$ and $s\geq0$. 
  
\item ${\mathbf{2.5}^2}:4\;\mbox{and}\;5$, $\bar{\G}=\mathrm{span}\{e_1,e_2,u_1,u_2,u_3,u_4 \}$ with $B_2$ and
  \[[e_1,u_2]=u_1,[e_1,u_3]=-u_2,[e_2,u_3]=u_4,[e_2,u_4]=-u_1,
  [u_2,u_3]=\e(1+s)e_1,[u_3,u_4]=\e(1-s)e_2,\;s\geq0,\;\e=-1,1. \]
                                                                                                                                                                                                                             
   \end{enumerate}

   \end{theo}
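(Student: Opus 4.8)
The plan is to run the six-step algorithm stated just above against Komrakov's classification \cite{komrakov} of four-dimensional homogeneous pseudo-Riemannian manifolds, retaining exactly those pairs $(\bar{\G},\G)$ that admit a Lorentzian invariant metric whose curvature is both Ricci isotropic and semi-symmetric. Concretely, I would go through Komrakov's list of admissible pairs $(\bar{\G},\G)$ with $\dim\mathfrak{m}=4$, each given together with a basis $\{e_1,\ldots,e_r,u_1,\ldots,u_4\}$ adapted to $\bar{\G}=\G\oplus\mathfrak{m}$ and the induced isotropy representation $\rho$. For each pair, Step~1 solves the linear system $\rho(e_i)^t\circ B+B\circ\rho(e_i)=0$ for the symmetric matrix $B$; a pair is discarded unless this space contains a nondegenerate form of Lorentzian signature, and the remaining $B$ is put into one of Komrakov's normal forms --- these are the matrices $B_0$, $B_1$, $B_2$ appearing in the statement.

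For each surviving triple $(\bar{\G},\G,B)$ I would then carry out Steps 2--5: compute $\nu$ and hence the Levi-Civita connection $\na$, the curvature operator $\Ku(a,b)=[\na(a),\na(b)]-\na([a,b]_{\mathfrak{m}})-\rho([a,b]_{\G})$, the Ricci tensor $\ric_{ij}=\sum_{r}\Ku_{ri}(u_r,u_j)$, and the Ricci operator $\Ri=B^{-1}\ric$. Now impose the two algebraic conditions that define the present case: $\Ri\neq0$ together with $\Ri^2=0$, and the semi-symmetry identity \eqref{semi} evaluated on all quadruples of basis vectors of $\mathfrak{m}$. A cheap preliminary filter is \eqref{semi1}, $\Ku(u,v)\circ\Ri=\Ri\circ\Ku(u,v)$, which already eliminates many candidates. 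Each of these is a finite polynomial system in the structure constants of $\bar{\G}$ and the entries of $B$; solving them case by case --- in practice with a computer algebra system, as the authors note --- produces the families listed, once one removes the branches that are actually Ricci flat, Einstein with a nonzero eigenvalue, or have diagonalizable nonzero Ricci (those belong to the other cases of Theorem~\ref{main}), and once one merges solutions related by the residual freedom in the choice of $\mathfrak{m}$ and of the normalized basis.

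The main obstacle is not a single deep idea but the sheer combinatorial and computational volume: Komrakov's classification contains a great many pairs, each typically branching into several normalized metrics, and for each branch the Ricci-isotropy and semi-symmetry equations form sizable polynomial systems whose resolution demands careful case splitting. A secondary point requiring care is completeness and non-redundancy of the final list: one must verify that no admissible pair was overlooked and that the surviving models are pairwise non-isometric, which is done by comparing invariants --- the Jordan type of $\Ri$, the holonomy algebra $\mathfrak{h}(\G)$, whether the space is second-order locally symmetric --- across the candidates.
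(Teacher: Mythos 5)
Your proposal follows exactly the paper's own route: the authors prove Theorem \ref{main3} precisely by running the six-step algorithm of Section \ref{section6} (invariant metrics, Levi-Civita connection, curvature, Ricci tensor and operator, then the conditions $\Ri\neq0$, $\Ri^2=0$ and \eqref{semi}) case by case through Komrakov's list, with computer assistance, just as you describe. Your additional remarks on filtering out the other Ricci types and checking non-redundancy are consistent with, and if anything slightly more explicit than, what the paper does.
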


   In Table 1, we give the list of Ricci flat homogeneous semi-symmetric Lorentzian manifolds.
   
   \begin{center}
   \begin{tabular}{|l||l||l||l|}
     \hline
     Index &$ B_0$& Conditions&  $ dim(\h(\Ku))$\\
     \hline
                                  $\begin{array}{c}
                                    1.1^2:2,8\\
                                    1.1^2:12 \\
                                    1.1^3:1 \\
                                    1.1^4:1 \\
                                  \end{array}
                                $& $\left(
                                  \begin{array}{cccc}
                                    a& 0 &0 & 0 \\
                                    0 & b & 0 & c \\
                                    0 & 0 & a & 0 \\
                                    0 & c & 0 & d \\
                                  \end{array}
                                \right)$, $ bd< c^2$ & $\begin{array}{c}
                                    p=0\\
                                    \la=0 \\
                                              \\
                                              \\                      \end{array}
                                $&$\begin{array}{c}
                                    0\\
                                    0 \\
                                  0\\
                                    0
                                    \\
                                    \end{array}
                                $\\
     \hline
      $
                                  \begin{array}{c}
                                    1.4^1:2\\
                                    1.4^1:9 \\
                                     1.4^1:10\\
                                    1.4^1:11 \\
                                    1.4^1:12 \\
                                    1.4^1:13 \\
                                     1.4^1:14\\
                                    1.4^1:16,19 \\
                                    1.4^1:23,26 \\
                                  \end{array}
                                $& $\left(
                                  \begin{array}{cccc}
                                    0& 0 & a & 0 \\
                                    0 & -a & 0 & 0 \\
                                    a & 0 & b & d \\
                                    0 & 0 & d & c \\
                                  \end{array}
                                \right)$,  $ac<0$ & $\begin{array}{c}
                                    (p,b)=(1,0)\\
                                     
                                     2a(p^2-p+r)=c\\
                                   p^2+p+r=0 \\
                                    2ar=c \\
                                   r=0 \\
                                    2a(r+1)=c\\
                                    r=-1 \\
                                    2a+c=0 \\
                                     \\
                                  \end{array}$&$ \begin{array}{c}
                                   0\\
                                      2\\
                                     0~~ ou~~ 2 \\
                                   2 \\
                                    0 \\
                                    2\\
                                   2 \\
                                     2\\0\\
                                  \end{array}$\\
     \hline
                  $\begin{array}{c}
                                   2.1^2:6 \\
                                  \end{array}
                                $& $\left(
                                  \begin{array}{cccc}
                                    0& 0 &a & 0 \\
                                    0 & b & 0 & 0 \\
                                    a & 0 & 0 & 0 \\
                                    0 & 0 & 0 & b \\
                                  \end{array}
                                \right)$ & $\begin{array}{c}
                                   
                                    \\
                                    \end{array}
                                $&$\begin{array}{c}
                                    
                                                                   0 \\
                                  \end{array}$ \\
     \hline
         $\begin{array}{c}
                                    2.4^1:3 \\
                                  \end{array}
                                $& $\left(
                                  \begin{array}{cccc}
                                    0& 0 &a & 0 \\
                                    0 & -a & 0 & 0 \\
                                    a & 0 & 0 & 0 \\
                                    0 & 0 & 0 & b \\
                                  \end{array}
                                \right),$ $ab<0$&
                               &0 \\
     \hline
         $\begin{array}{c}
                                    2.5^2:2 \\
                                   2.5^2:3 \\
                                    2.5^2:6 \\
                                   2.5^2:7 \\
                                  \end{array}
                                $& $\left(
                                  \begin{array}{cccc}
                                    0& 0 &a & 0 \\
                                    0 & a & 0 & 0 \\
                                    a & 0 & b & 0 \\
                                    0 & 0 & 0 & a \\
                                  \end{array}
                                \right)$ & $\begin{array}{c}
                                    p=-r^2 \\
                                    4r=1\\
                                    \\
                                    \\
                                    \end{array}
                                $&$\begin{array}{c}
                                   0~~ si~~ s=0~~ ou~~ 2~~ si~~ s\neq0 \\
                                   0~~ si~~ s=0~~ ou~~2~~ si~~ s\neq0 \\2\\
                                                                   0 \\
                                  \end{array}$ \\
     \hline

                  $\begin{array}{c}
                                   3.2^2:2 \\
                                  \end{array}
                                $& $\left(
                                  \begin{array}{cccc}
                                    0& 0 &a & 0 \\
                                    0 & a & 0 & 0 \\
                                    a & 0 & 0 & 0 \\
                                    0 & 0 & 0 & a \\
                                  \end{array}
                                \right)$ &
                                &$  0$ \\
                                  
     \hline
         $
                                    3.5^1:4 
                                $& $\left(
                                  \begin{array}{cccc}
                                    0& 0 &2a & 0 \\
                                    0 & a & 0 & 0 \\
                                    2a & 0 & 0 & 0 \\
                                    0 & 0 & 0 & b \\
                                  \end{array}
                                \right),$ $ab>0$&
                                &$
                                                                   0 $ \\
     \hline
         $
                                    3.5^2:4 $& $\left(
                                  \begin{array}{cccc}
                                   a& 0 &0 & 0 \\
                                    0 & a & 0 & 0 \\
                                   0 & 0 & a & 0 \\
                                    0 & 0 & 0 & b \\
                                  \end{array}
                                \right),$ $ab<0$&
                                &$
                                    0 $ \\
     \hline
         $
                                    4.1^2:1
                                $& $\left(
                                  \begin{array}{cccc}
                                    0& 0 &a & 0 \\
                                    0 & a & 0 & 0 \\
                                    a & 0 & 0 & 0 \\
                                    0 & 0 & 0 & a \\
                                  \end{array}
                                \right)$&
                                &$
                                                                   0 $ \\
     \hline
         $
                                    6.1^3:3  $& $\left(
                                  \begin{array}{cccc}
                                   a& 0 &0 & 0 \\
                                    0 & a & 0 & 0 \\
                                   0 & 0 & a & 0 \\
                                    0 & 0 & 0 & -a \\
                                  \end{array}
                                \right)$&
                                  &$0$ \\
     \hline
     \end{tabular}\bigskip
     
     (Table 1: List of Ricci flat  homogeneous semi-symmetric Lorentzian manifolds)
   \end{center}

 \section{Proof of Theorem \ref{main}}\label{section7}
 
 The proof is based on the results of Sections \ref{section2} and \ref{section3} and the following two theorems proved, respectively, in \cite{derd} and \cite{calvaruso3}.
 
 \begin{theo}[\cite{derd}]\label{derd} Let $(M,g)$ be an oriented four-dimensional Lorentzian Einstein manifold whose curvature operator, treated as a complex-linear vector bundle morphism	$\wi K	:	\wedge^2TM\too \wedge^2TM$,	is	diagonalizable	at	every	point	and	has	complex eigenvalues that form constant functions $M\too \C$. Then $(M,g)$ is locally homogeneous, and one of the following three cases occurs:\begin{enumerate}\item[$(a)$]
 $(M, g)$ is a space of constant curvature. \item[$(b)$] $(M, g)$ is locally isometric to the Riemannian product of two 
 pseudo-Riemannian surfaces having the same constant Gaussian curvature. \item[$(c)$] $(M, g)$ is locally isometric to  a Petrov's Ricci-flat manifold.\end{enumerate}   Furthermore, $(M, g)$ is locally symmetric in cases $(a)-(b)$, but not in $(c)$, and in
case $(c)$ it is locally isometric to a Lie group with a left-invariant metric.
 
 \end{theo}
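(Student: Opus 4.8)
The plan is to exploit the complex structure $J=*$ on $\wedge^2TM$ introduced in Section~\ref{section3}. Since $(M,g)$ is Einstein and oriented, at each point the total curvature operator $\wi\Ku$ commutes with $J$, so $(\wedge^2TM,J)$ is a complex vector bundle of complex rank $3$ and $\wi\Ku$ acts on it as a $\C$-linear, self-adjoint endomorphism with respect to the $\C$-bilinear extension of $\prs$ (its matrix in a suitable complex frame is the one in \eqref{complex}, whose trace is fixed by the scalar curvature). By hypothesis $\wi\Ku$ is $\C$-diagonalizable with complex eigenvalues $z_1,z_2,z_3$ (listed with multiplicity) that are constant on $M$. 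Grouping equal eigenvalues, I would form the smooth $\C$-eigensubbundles $\mathcal{E}_{a}=\ker(\wi\Ku-z_a\,\mathrm{Id})$; constancy of the $z_a$ together with diagonalizability at every point gives a direct sum decomposition $\wedge^2TM=\bigoplus_a\mathcal{E}_a$.

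The decisive step is to differentiate the eigenvalue relations. First I would invoke the second Bianchi identity, which for an Einstein metric makes the curvature harmonic ($\de\wi\Ku=0$) and imposes the attendant Codazzi-type symmetry on $\na\wi\Ku$. Writing $\wi\Ku\,\xi=z_a\xi$ for a local section $\xi$ of $\mathcal{E}_a$ and applying $\na_X$ gives $(\wi\Ku-z_a)\na_X\xi=-(\na_X\wi\Ku)\xi$, so the component of $\na_X\xi$ in $\mathcal{E}_b$ with $b\neq a$ equals $(z_b-z_a)^{-1}$ times the $\mathcal{E}_b$-part of $(\na_X\wi\Ku)\xi$. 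Combining this with the self-adjointness of $\wi\Ku$ and the Bianchi symmetry of $\na\wi\Ku$ yields algebraic constraints on the off-diagonal blocks of $\na\wi\Ku$. Where the induced pairing is definite on an eigenbundle these constraints force the off-diagonal blocks to vanish, so that eigenbundle is parallel; where the pairing is null they leave only a nilpotent, recurrent contribution. This is the main obstacle: the Lorentzian signature permits an eigenvalue whose eigenbundle is $\prs$-isotropic, and there the bundle need not be parallel, only recurrent.

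With this dichotomy in hand I would split into cases by the multiplicities of the $z_a$. If all three coincide, $\wi\Ku=z\,\mathrm{Id}$ is a $\C$-scalar, which by \eqref{complex} forces $W=0$ and constant sectional curvature, giving case $(a)$. If there are exactly two distinct eigenvalues, the decomposition is by parallel nondegenerate subbundles; transporting it to $TM$ through $\wedge^2TM\simeq\mathfrak{so}(TM)$ and applying the de~Rham--Wu decomposition produces a local product of two pseudo-Riemannian surfaces, and the Einstein condition equates their Gaussian curvatures, giving case $(b)$. In both situations each eigenbundle is parallel and the eigenvalues are constant, so $\na\wi\Ku=0$, hence $\na\Ku=0$ and $(M,g)$ is locally symmetric.

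The remaining configuration is the one where parallelism fails, i.e.\ where some eigenbundle is isotropic and $\na\wi\Ku\neq0$. Here I would first show the scalar curvature vanishes, so $(M,g)$ is Ricci-flat, and that the Weyl operator has the degenerate Petrov type forced by the isotropic eigenbundle. Using the recurrent null direction and the constancy of the eigenvalues I would integrate the structure equations to recognise the metric as Petrov's explicit Ricci-flat model; one then checks directly that it admits a simply transitive group of local isometries, exhibiting it as a Lie group with a left-invariant metric, while $\na\wi\Ku\neq0$ shows it is not locally symmetric, which is case $(c)$. Finally, local homogeneity in all three cases follows either from the symmetric-space structure in $(a)$--$(b)$ or from the explicit transitive isometry group in $(c)$; alternatively one may invoke Singer's criterion, the hypotheses guaranteeing that the curvature and its first covariant derivative are modelled on a single algebraic datum at every point.
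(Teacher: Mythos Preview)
The paper does not prove this theorem: it is quoted verbatim from Derdzinski \cite{derd} and used as a black box in Section~\ref{section7}. There is therefore no ``paper's own proof'' to compare against; your proposal is an attempt to reconstruct Derdzinski's argument, which lies outside the present paper.

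As a sketch of that external result, your outline has the right ingredients but several transitions are not justified. The case split by eigenvalue multiplicity does not line up cleanly with the trichotomy $(a)$--$(c)$: you assert that exactly two distinct eigenvalues forces a product of surfaces via de~Rham--Wu, but you have not shown the eigenbundles are nondegenerate (in Lorentzian signature a complex eigenline of $\wi\Ku$ can be $\prs$-null), nor explained how a parallel splitting of $\wedge^2TM$ descends to a parallel splitting of $TM$. In the last case you simultaneously change the criterion from ``three distinct eigenvalues'' to ``some eigenbundle is isotropic'' without arguing these coincide, and the claim ``I would first show the scalar curvature vanishes'' is the crux of case $(c)$ yet receives no argument. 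Finally, identifying the resulting metric with Petrov's explicit Ricci-flat model requires integrating a specific overdetermined system, which is nontrivial and is where most of the work in \cite{derd} actually lies. For the purposes of the present paper none of this is needed: the theorem is simply invoked, and your task would only be to cite it.
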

 
 \begin{theo}[\cite{calvaruso3}]\label{calv} Let $(M,g)$ be a locally homogeneous Lorentzian four-manifold. If its Ricci operator is diagonalizable then $(M,g)$ is either Ricci-parallel or locally isometric to a Lie group equipped with a left invariant Lorentzian metric.

 \end{theo}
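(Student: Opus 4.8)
The plan is to describe $(M,g)$ locally by an infinitesimal model and to split the argument according to the isotropy. Fix $p\in M$ and let $\G$ be a transitive Lie algebra of (local) Killing fields, so that $\G=\h\oplus\mathfrak{m}$, where $\h$ is the isotropy subalgebra of Killing fields vanishing at $p$ and $\mathfrak{m}\cong T_pM$. The linear isotropy representation $\h\too\mathrm{so}(T_pM)=\mathrm{so}(1,3)$ is faithful (a local isometry fixing $p$ with trivial differential is the identity), and since the Ricci operator $\Ri_p$ is invariant under all local isometries it commutes with the image of $\h$. If $\h=0$ the action is simply transitive and $(M,g)$ is locally isometric to a Lie group with left-invariant metric; more generally this conclusion holds whenever $\G$ admits a subalgebra complementary to $\h$, which then acts simply transitively. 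Thus the whole content of the theorem concerns the case of essential (non-removable) isotropy, and the goal there is to prove $\na\Ri=0$.

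First I would exploit the spectral structure. Because $\Ri_p$ is self-adjoint and, by hypothesis, diagonalizable over $\R$, the tangent space splits orthogonally
\[ T_pM=V_{\la_1}\oplus\cdots\oplus V_{\la_k},\qquad V_{\la_i}=\ker(\Ri_p-\la_i\mathrm{Id}), \]
into mutually orthogonal eigenspaces, each of which is necessarily nondegenerate (the restriction of $\prs$ to an orthogonal summand spanning a nondegenerate space is nondegenerate). In Lorentzian signature exactly one summand is Lorentzian and the others are Euclidean. Since $\h$ commutes with $\Ri_p$ it preserves this decomposition, whence
\[ \h\subset\mathrm{so}(V_{\la_1})\oplus\cdots\oplus\mathrm{so}(V_{\la_k}). \]
If all eigenvalues coincide then $\Ri_p=\la\,\mathrm{Id}$ is Einstein; as the scalar curvature of a homogeneous space is constant, $\la$ is constant and $\na\Ri=0$ automatically, so this case is settled. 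Hence I may assume at least two distinct eigenvalues, which forces each nontrivial factor $\mathrm{so}(V_{\la_i})$ to be one of $\mathrm{so}(2)$, $\mathrm{so}(1,1)$, $\mathrm{so}(3)$ or $\mathrm{so}(1,2)$, and reduces the admissible essential isotropies to a short explicit list of subalgebras of $\mathrm{so}(1,3)$.

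Next, for each admissible isotropy I would invoke Komrakov's classification \cite{komrakov} of four-dimensional homogeneous pseudo-Riemannian spaces with non-trivial isotropy to enumerate the finitely many infinitesimal models $(\bar\G,\G)$ carrying an invariant Lorentzian metric whose Ricci operator is diagonalizable with the prescribed eigenspace pattern. For each such model I would run the algorithm of Section \ref{section6}: determine the invariant metric $B$, the invariant Levi-Civita connection $\na$, the curvature, the Ricci tensor and operator $\Ri=B^{-1}\ric$, retain those for which $\Ri$ is diagonalizable, and finally compute $\na\Ri$. The claim to establish is that in every one of these genuinely non-reductive cases one finds $\na\Ri=0$, while the models that instead admit a subalgebra of $\G$ acting simply transitively fall into the Lie-group alternative; in either case the dichotomy holds.

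The main obstacle is this last step. Conceptually, the difficulty is twofold: to separate the models whose isotropy is truly essential from those that merely look non-trivial but admit a simply transitive complement, and then to verify Ricci-parallelism across the remaining list. Representation theory alone helps but does not suffice: Schur-type vanishing of the $\h$-invariant components of the $(0,3)$-tensor $\na\Ri$, together with the divergence constraint $\mathrm{div}\,\Ri=\tfrac12\,d(\mathrm{scal})=0$ coming from the constancy of the scalar curvature, kills several blocks but not every admissible invariant component, so explicit computation with the Komrakov models seems unavoidable. The non-compact isotropies $\mathrm{so}(1,1)$ and $\mathrm{so}(1,2)$, where the homogeneous space need not be reductive, are the delicate ones and demand the most care.
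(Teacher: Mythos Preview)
The paper does not give its own proof of this statement: Theorem~\ref{calv} is quoted verbatim from Calvaruso--Zaeim \cite{calvaruso3} and used as a black box in the proof of Theorem~\ref{main}. So there is no proof in the paper to compare your proposal against.

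That said, your outline is a reasonable reconstruction of how such a result is typically obtained, and it is in the spirit of \cite{calvaruso3}, which does proceed via Komrakov's classification. Two remarks on your sketch. First, the dichotomy you set up (``either a simply transitive complement exists, or the isotropy is essential and then $\nabla\mathrm{Ric}=0$'') is exactly the right framing, but you correctly identify that the second branch is where all the work lies; your proposal stops short of carrying it out, so as written it is a plan rather than a proof. Second, your representation-theoretic reduction of the admissible isotropies is sound, but be aware that in \cite{calvaruso3} the actual verification runs through the full Komrakov list rather than through an a priori reduction of $\mathfrak{h}$, and the Ricci-parallel conclusion in the essential-isotropy cases comes out of explicit computation model by model. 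If you want a self-contained argument you will need to execute that case check; the invariant-theoretic shortcuts you mention (Schur-type vanishing, the divergence identity) genuinely do not close the gap on their own, as you already suspect.
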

 
 \paragraph{Proof of Theorem \ref{main}} \begin{proof} Let $(M,g)$ be a locally homogeneous semi-symmetric Lorentzian four-manifold. According to Proposition \ref{pr2}, its Ricci operator has only real eigenvalues. Suppose that $\Ri$   has a non null eigenvalue.  Then this eigenvalue has at least multiplicity two and $\Ri$ must be diagonalizable. So, according to Theorem \ref{calv}, $(M,g)$ is either Ricci-parallel or locally isometric to a Lie group equipped with a left invariant Lorentzian metric. If $(M,g)$ is Ricci-parallel and has two distinct eigenvalues then, according to Theorem 7.3 in \cite{calvaruso3}, one of the situation $(a)-(d)$ occurs. Suppose now that $(M,g)$ is Einstein with non null scalar curvature. According to Theorem \ref{main2}, the total curvature is diagonalizable and we can apply Theorem \ref{derd} to get that one of the conclusions $(a)$ or $(b)$ of this theorem occurs. Now, if $\Ri$ has only 0 as eigenvalue then, according to Proposition \ref{pr2}, $\Ri^2=0$. This completes the proof.\end{proof}


\begin{thebibliography}{99}

\bibitem{bou1} M. Ait Haddou, M. Boucetta, H. Lebzioui,  Left-invariant Lorentzian flat metrics on Lie groups,
Journal of Lie Theory 22 (2012), No. 1, 269-289.



\bibitem{besse} Arthur L. Besse, Einstein manifolds, Classic in Mathematics Springer (2008).


\bibitem{blanco} O. F. Blanco, M. Sanchez, J. M. Senovilla, Complete classifcation of second-order symmetric spacetimes. Journal of Physics: Conference Series 229 (2010), 012021, 5pp.

\bibitem{bou} Mohamed Boucetta, Hicham Lebzioui, Non unimodular Lorentzian flat Lie algebras, To appear in Communication in Algebra.


\bibitem{calvaruso1} G. Calvaruso, Three-dimensional semi-symmetric homogeneous Lorentzian manifolds, Acta Math. Hung., (1-2) 121 (2008), 157-170.

\bibitem{calvaruso2} G. Calvaruso and B. De Leo, Semi-symmetric Lorentzian three-manifolds admitting a parallel degenerate line field, Mediterranean Journal of Mathematics
March 2010, Volume 7, Issue 1, pp 89-100

\bibitem{calvaruso3} Giovanni Calvaruso and Amirhesam Zaeim, Four-dimensional homogeneous Lorentzian manifolds, Monatsh Math (2014) 174:377-402.


\bibitem{calvaruso} G. Calvaruso \& A. Zaeim, Four Lorentzian Lie groups, Differential Geometry and its Application 31 (2013) 496-509.

\bibitem{cartan} E. Cartan, Le\c con sur la g\'eom\'etrie des espaces de Riemann, 2nd. Edition, Paris, 1946.

\bibitem{derd} Andrzej Derdzinski, Curvature homogeneous indefinite Einstein metrics in dimension four: the diagonalizable case, Contemporary Mathematics Volume 337 (2003) 21-38.

\bibitem{galaev} A. S. Galaev, T. Leistner, Holonomy groups of Lorentzian manifolds: classification, examples, and applications, Recent
developments in pseudo-Riemannian geometry, 53-96, ESI Lect. Math. Phys., Eur. Math. Soc., Zurich, 2008.

\bibitem{galaev1} A. S. Galaev, Holonomy of Einstein Lorentzian manifolds,  Classical and Quantum Gravity, Volume 27, Number 7 (2010).

\bibitem{komrakov} Komrakov Jnr, B., Einstein-Maxwell equation on four-dimensional homogeneous spaces.
Lobachevskii J. Math. 8, 33-165 (2001).

\bibitem{oneil} B. O'Neill, Semi-Riemannian Geometry, New York: Academic Press, 1983.

\bibitem{petrov} A. Z. Petrov, Einstein spaces, Pergamon Press (1969). 

\bibitem{senovilla} J. M. Senovilla, Second-order symmetric Lorentzian manifolds. I. Characterization and general results, Classical Quantum Gravity 25 (2008), no. 24, 245011, 25 pp.

\bibitem{zabo} Z. I. Szabo, Structure theorems on Riemannian manifolds satisfying $R(X, Y) . R = 0$, I, the local version, J. Differential Geom. 17 (1982), 531-582.


\bibitem{zabo1} Z. I. Szabo, Structure theorems on Riemannian manifolds satisfying $R(X, Y) . R = 0$, II, global version,
Geometriae Dedicata 
Volume 19, Issue 1 , pp 65-108. 

\bibitem{takagi} H. Takagi, An example of Riemannian manifold satisfying $R(X, Y ) · R=0$ but not
$\na R = 0$, T\^ohoku Math. J. 24 (1972), 105-108.


\end{thebibliography}
\end{document}